\theoremstyle{plain} 
\newtheorem{theorem}{Theorem} [section]
\newtheorem{lemma}[theorem]{Lemma}
\newtheorem{prop}[theorem]{Proposition} 
\newtheorem{coro}[theorem]{Corollary}
\theoremstyle{definition}
\newtheorem{definition}[theorem]{Definition}
\newtheorem{remark}[theorem]{Remark}
\newcommand{\ts}{\hspace{0.5pt}}
\newcommand{\RR}{\mathbb{R}\ts}
\newcommand{\ZZ}{\mathbb{Z}}
\newcommand{\PP}{\mathbb{P}}
\newcommand{\EE}{\mathbb E}
\newcommand{\CalP}{\mathcal{P}}
\newcommand{\CalQ}{\mathcal{Q}}
\newcommand{\CalT}{\mathcal{T}}
\title[Taylor--Socolar Hexagonal Tilings As Model Sets]
{Taylor--Socolar Hexagonal Tilings As Model Sets}
\author{Jeong-Yup Lee}
\address{Department of Mathematics Education, Kwandong University, 
\newline 
\hspace*{12pt} Gangneung, Gyeonggi-do, 210-701, Korea}
\email{jylee@kwandong.ac.kr}
\author{Robert V.\ Moody}
\address{Department of Mathematics and Statistics, 
University of Victoria, \newline
\hspace*{12pt}Victoria, British Columbia V8W 3P4, Canada}
\email{rmoody@uvic.ca}
\thanks{The first author is grateful for the support of Basic Research Program through the
National Research Foundation of Korea(NRF) funded by the Ministry
of Education, Science and Technology(2010-0011150)  as well as the support of KIAS}
\begin{document}

\maketitle

\begin{abstract} The Taylor--Socolar tilings \cite{ST,Taylor} are regular hexagonal tilings of the plane but are distinguished in being comprised of hexagons of two colors in an aperiodic way.  We place the Taylor--Socolar tilings into an algebraic setting
which allows one to see them directly as model sets and to understand the corresponding tiling hull
along with its generic and singular parts. 

Although the tilings were originally obtained by matching rules and by substitution, our approach sets the tilings into the framework of a
cut and project scheme and studies how the tilings relate to the corresponding internal space. 
The centers of the entire set of tiles of one tiling form a lattice
$Q$ in the plane. If $X_Q$ denotes the set of all Taylor--Socolar tilings with centers on $Q$
then $X_Q$ forms a natural hull under the standard local topology of hulls
and is a dynamical system for the action of $Q$.
The $Q$-adic completion $\overline Q$ of $Q$ is a natural factor
of $X_Q$ and the natural mapping $X_Q \longrightarrow \overline{Q}$ is bijective except at a 
dense set of points of measure $0$ in $\overline Q$.
We show that $X_Q$ consists of three LI classes under translation. Two of these LI classes are very small, namely countable $Q$-orbits in $X_Q$. The other is a minimal dynamical system which 
maps surjectively to $\overline Q$ and which is variously $2:1$, $6:1$, and $12:1$ at the singular points. 

We further develop the formula of \cite{ST} that determines the parity of the tiles
of a tiling in terms of the co-ordinates of its tile centers. Finally we show that the hull of the parity tilings can be identified
with the hull $X_Q$; more precisely the two hulls are mutually locally derivable.

 \end{abstract}

\section{Introduction}\label{intro}

This paper concerns the aperiodic hexagonal mono-tilings created by Joan Taylor. We learned
about these tilings from the unpublished (but available online) paper of Joan Taylor
\cite{Taylor}, the extended paper of Socolar and Taylor \cite{ST}, and a talk given
by Uwe Grimm at the KIAS conference on aperiodic order in September, 2010 \cite{BG}. 
These tilings are in essence regular hexagonal tilings of the plane, but there are two forms
of marking on the hexagonal tile (or if one prefers, the two sides of the tile are marked differently).
We refer to this difference as {\bf parity} (and eventually distinguish the two sides 
as being sides $0$ and $1$), and 
in terms of parity the tilings are aperiodic. In fact the parity patterns of tiles created in this way
are fascinating in their apparent complexity, see Fig.~\ref{parityPattern} and Fig.~\ref{ST-whole-tiling}. 

\begin{figure}
\centering
\includegraphics[width=13cm]{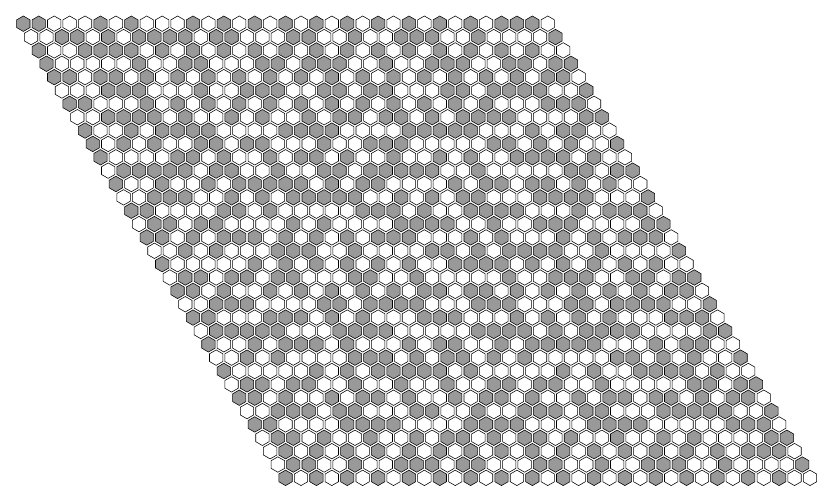}
\caption{A section of a Taylor--Socolar tiling showing the complex patterning arising
from the two sides of the hexagonal tile, here indicated in white and gray. Notice
that there are islands (Taylor and Socolar call the llamas) both of white and gray
tiles.}
\label{parityPattern}
\end{figure}

The two Taylor--Socolar tiles are shown in Fig.~\ref{two-basic-tiles}, the main features being the
black lines, one of which is a stripe across the tile, and the three colored diameters,
one of which is split in color\footnote{Note that the two tiles here are not mirror images of
each other, unless one switches color during the reflection. In \cite{ST} there is an alternative
description of the tiles in which the diagonals have flags at their ends, and in this
formulation the two tiles are mirror images of each other.}. 
The difference in the two tiles is only in which
side of the color-split diameter the stripe crosses. In the figure the tiles are colored
white and gray to distinguish them, but it is the crossing-color of the black stripe that
is the important distinguishing feature.

\begin{figure}
\centering
\includegraphics[width=6cm]{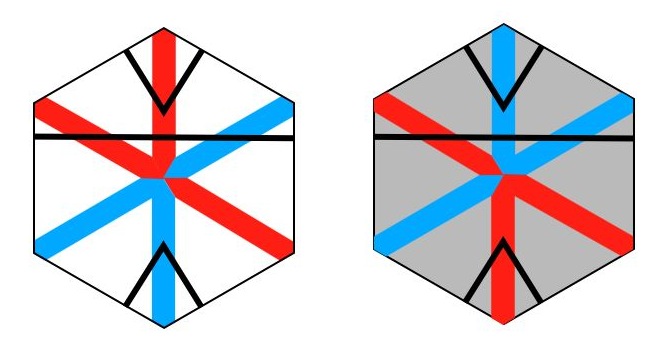}
\caption{The two basic hexagonal tiles. One is a white tile and the other a light
gray. These are colored with red and blue
diameters. 
The rotational position of the tiles is
immaterial. Note how the tiles are identical as far as the red diagonal and blue diagonal
are concerned. The distinction is in which color of the red-blue diagonal cuts the  black stripe.}
\label{two-basic-tiles}
\end{figure}

Taylor--Socolar tilings can be defined by following simple matching rules ({\bf R1, R2}) and can also be constructed by substitution (the scaling factor being $2$). In this paper it is the matching rules that are of
importance. 

\begin{itemize}
\item[{\bf R1}] the black lines must
join continuously when tiles abut;
\item[{\bf R2}] the ends of the diameters
of two hexagonal tiles that are separated by an edge of another tile must be of opposite colors,
Fig.~\ref{rule2}.
\end{itemize}

\begin{figure}
\centering
\includegraphics[width=2cm]{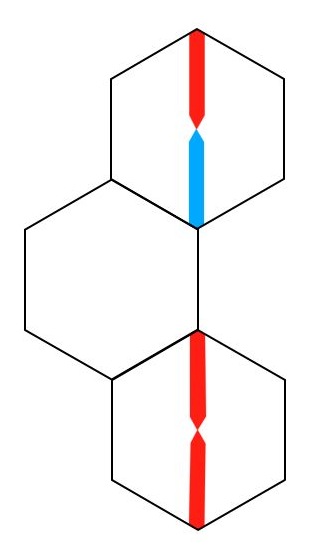}
\caption{Rule {\bf R2}: two hexagon tiles separated by the edge of another hexagon tile. Note that
the diameter colors of the two hexagons are opposite at the two ends of the separating edge.
It makes no difference whether or not the diameters are color-split -- the diameters must have different
colors where they abut the separating edge.}
\label{rule2}
\end{figure}
The paper \cite{ST} emphasizes the tilings from the point of view of matching rules, whereas \cite{Taylor} emphasizes
substitution (and the half-hex approach). There is a slight mis-match between the two approaches,
see \cite{BG}, which we will discuss later. 

If one looks at part of a tiling with the full markings of the tiles made visible, then one is immediately
struck by how the black line markings of the tiles assemble to form nested equilateral triangles,
see Fig.~\ref{ST-whole-tiling}. Although these triangles are slightly shrunken (which ultimately
is important), we see that basically the vertices of the triangles are tied to the centers
of the hexagons, and the triangle side-lengths are $1,2,4,8, \dots$
in suitable units. This triangle pattern is highly reminiscent of the square patterns that underlie
the famous Robinson tilings \cite{Robinson, GJS} which also appear in sizes which scale up by
factors of $2$. These tilings are limit-periodic tilings and can be described by model sets 
whose internal spaces are $2$-adic spaces. The Taylor-Socolar tilings are also limit-periodic and it seems
natural to associate some sort of $2$-adic spaces with them and to give a model-set
interpretation of the picture. 

One purpose of this paper is to do this, and it has the natural consequence that the tilings are
pure point diffractive. It is convenient to base the entire study on a fixed
standard hexagonal tiling of the coordinate plane $\RR^2$. The centers of the hexagonal tiles can then be
interpreted as a lattice in the plane (with one center at $(0,0)$). 
The internal space of the cut and project scheme that we shall construct is based on a $2$-adic completion $\overline{Q}$ of the group $Q$
consisting of all translation vectors between the centers of the hexagons. We shall show that there
is a precise one-to-one correspondence between triangulations and elements of $\overline Q$. But the 
triangulation is not the whole story.

The set of all Taylor--Socolar tilings associated with a fixed standard hexagonal tiling of the plane form a tiling hull $X_Q$. This hull is a dynamical system (with group $Q$) and carries the standard topology of tiling hulls. Each tiling has an associated
triangulation, but the mapping $\xi: X_Q \longrightarrow \overline Q$ so formed, while generically $1-1$,
is not globally $1-1$. What lies behind this is the question of backing up from the triangulations
to the actual tilings themselves. The question is how are the tile markings deduced from the
triangulations so as to satisfy the rules {\bf R1, R2}? There are two aspects to this. The triangulations
themselves are based on hexagon centers, whereas in an actual tiling the triangles are shrunken
away from vertices. This shrinking moves the triangle edges and is responsible for the 
off-centeredness of the black stripe on each hexagon tile. How is this shrinking 
(or edge shifting, as we call it) carried out? The second feature
is the coloring of the diagonals of the hexagons. What freedom for coloring exists, given that
the coloring rule {\bf R2} must hold?

In this paper we explain this and give a complete description of the hull and the mapping $\xi$,
Theorem~\ref{minimal}.
There are numerous places at which $\xi$ is singular (not bijective); in fact the set of singular points
in $\overline Q$ is dense. Two special classes of singular points are those corresponding to the
central hexagon triangulations
({\bf CHT})(see Fig.~\ref{CHT-tiling}) and the infinite concurrent $w$-line tilings ({\bf iCw-L}) (see Fig.~\ref{iCw-L-tiling}). In both cases there is
$3$-fold rotational symmetry of the {\em triangulation} and in both cases the mapping $\xi$ is many-to-one. These
two types of tilings play a significant role in \cite{ST}.

The hull has a minimal invariant component of full measure and
this is a single LI class. There are two additional orbits, whose origins are the {\bf iCw-L} triangulations,
and although they perfectly obey the matching rules they are not in the same LI class as all the other tilings.
On the other hand the {\bf CHT} tilings (those lying over the {\bf CHT} triangulations) are in the main LI class and,
because of the particular simplicity of the unique one whose center is $(0,0)$, the question of describing the parity (which tiles are facing up and which are facing down)
becomes particularly easy. Here we reproduce the parity formula for this {\bf CHT} tiling as given in \cite{ST}
(with some minor modifications in notation). We use this to give parity formulas for
all the tilings of $X_Q$.

A couple of comments about earlier work on aperiodic hexagonal tilings are appropriate here. 
D.~Frettl\"oh \cite{Dirk} discusses the half-hex tilings (created out of a simple substitution rule) and proves
that natural point sets associated with these can be expressed as model sets. Half-hexes don't play an explicit role in this paper, though the hull of the half-hex tilings is a natural factor of $X_Q$ lying between
$X_Q$ and $\overline Q$ \cite{Dirk, HF, GS}. They were important to Taylor's descriptions of her tilings and are implicitly embedded in them. 

In \cite{Penrose}, Roger Penrose gives a fine introduction to aperiodic tilings and then goes on to create a class of aperiodic hexagonal tilings, which he calls $1 + \epsilon + \epsilon^2$-tilings in which there are three types of tiles that assemble by matching rules. The main tiles are hexagonal, with keyed edges. The other two are a linear-like tile  with an arbitrarily small width ($\epsilon$ tiles) which fit along the hexagon edges, and some very tiny tiles ($\epsilon^2$) which fit at the corners of the tiles. 

Clearly his objective was to create a single tile that only tiles aperiodically, although that was not achieved
in \cite{Penrose}. Subsequently, however, Penrose did find a solution to the problem that uses
a single hexagonal tile with matching rules for the edges and corners, \cite{PenroseTwistor}. This has only recently become more widely known after Joan Taylor's work started to circulate. 

One can quibble about whether or not Taylor's tiling stretches the concept of matching rules since
the second rule relates non-adjacent tiles and also in her tiling there are two tiles, though (at least in the right markings) they are mirror images of each other. However, the tilings of Penrose and Taylor tilings are a fascinating pair. Extensive computational work of F.~G\"ahler indicates that the two tilings are quite distinct from one another, though they both have $\overline Q$ as a factor and apparently both have the same dynamical zeta functions \cite{Baake-Gaehler}. 

There is an algorithmic computation for determining that certain classes of substitution tilings have pure-point spectrum. It has been used to confirm that the Taylor--Socolar substitution tilings have pure point spectrum or, equivalently, are regular model sets \cite{AL2}.

\section{The triangulation} \label{construction}

In principle the tilings that we are interested in are not connected to the points of lattices 
and their cosets in $\RR^2$,
but are only point sets that arise in Euclidean space $\EE$ as the vertices and centers of tilings.
However, our objective here to realize tiling vertices in an algebraic
context and for that we need to fix an origin and a coordinate system so as to reduce the
language to that of $\RR^2$.
Let $Q$ be the triangular lattice in $\RR^2$ defined by
\[Q := \ZZ a_1 + \ZZ a_2\]
where $a_1 = (1,0)$ and $a_2 = (-\frac{1}{2}, \frac{\sqrt{3}}{2})$.
Then $P := \ZZ w_1 + \ZZ w_2$ where
$w_1= \frac{2}{3}a_1 + \frac{1}{3}a_2$ and $w_2= \frac{1}{3}a_1 + \frac{2}{3}a_2$
is a lattice containing $Q$ as sublattice of index $3$, see Fig.~\ref{hex}. For future
reference we note that $|a_1| = |a_2| =|a_1+a_2| =1$ and $|w_1| =|w_2| = 
|w_2-w_1|=1/\sqrt{3}$.

\begin{figure}
\centering
\includegraphics[width=9cm]{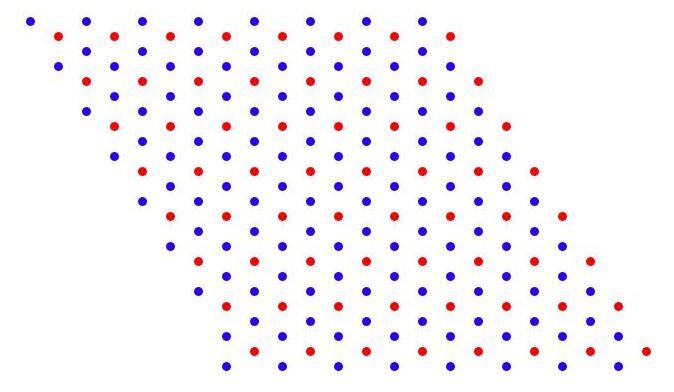}
\caption{The figure shows the standard triangular lattice $Q$ (the red points)  and the larger
lattice $P$ (red and blue points) in which $Q$ lies with index $3$.  The points of $Q$ may
be viewed as the vertices of a triangularization of the plane by equilateral triangles
of side length $1$. The red points are the centres of these triangles. The color here has nothing
to do with the coloring of the diagonals of the tiles -- it only distinguishes the two cosets.}
\label{hex}
\end{figure}

\smallskip
Joining the points of $Q$ that lie at distance $1$ from one another creates a triangular
tiling. Inside each of the unit triangles so formed there lies a point of $P$, and indeed
$P$ consists of three $Q$ cosets: $Q$ itself, the centroids of the ``up'' triangles
(those with a vertex above a horizontal edge), and the ``down" triangles
(those with a vertex below a horizontal edge), see Fig.~\ref{QP}. What we aim to do is to 
create a hexagonal tiling of $\RR^2$. When this tiling is complete, the points of
$Q$ will be the centers of the hexagonal tiles and the points of $P$ immediately 
surrounding the points of $Q$ will make up the vertices of the tiles.\footnote{
Nearest neighbours in $Q$ are distance $1$ apart and the short diameters of the hexagons
 are of length $1$ while the edges
of the hexagons are of length $r=1/\sqrt{3}$. The main diagonals
of the hexagons are of length $2r$ in the directions of $\pm w_1, \pm w_2, \pm(w_2 - w_1)$.
One notes that each of these vectors of $P$ is also of length $r$.}

Each of the hexagonal tiles 
will be marked by colored diagonals and a black stripe, see Fig.~\ref{two-basic-tiles}. 
These markings divide the tiles 
into two basic types, and it is describing the pattern made from these two types in model-set
theoretical terms that is a primary objective of this paper (see Fig.~\ref{ST-whole-tiling}). The other objective is to describe
the dynamical hull that encompasses all the tilings that belong to the Taylor--Socolar tiling family. 

 \begin{figure}
\centering
\includegraphics[width=6cm]{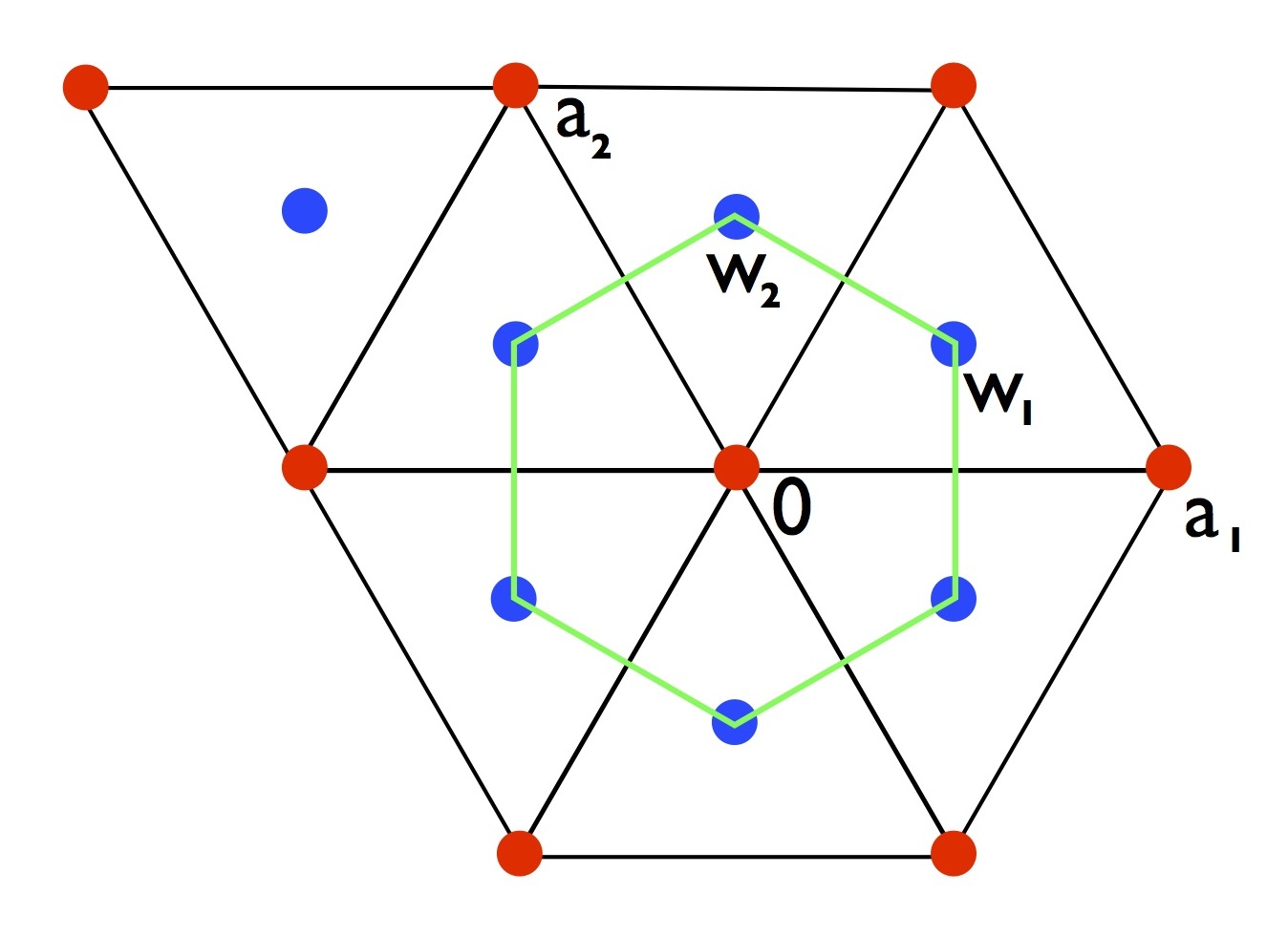}
\caption{The generators $a_1,a_2$ of $Q$ and the generators
$w_1,w_2$ of $P$, showing how the cosets of $Q$ in $P$ split
into the points of $Q$ and the centroids (centers) of the up and down triangles. Around
the point $0\in Q$ we see the hexagonal tile centered on $0$ with vertices in $P\backslash Q$.}
\label{QP}
\end{figure}

We let the coset of up (respectively down) points be denoted by 
$S^\uparrow_1 = w_1 + Q$ and $S^\downarrow_1 = w_2 +Q$ respectively:
\[ P = Q \cup S^\uparrow_1 \cup S^\downarrow_1 \,.\]

\begin{remark} There are three cosets of $Q$ in $P$. In our construction of the triangle patterns
we have taken the point of view that $Q$ itself will be used for triangle vertices and the other
two cosets for triangle centroids. However, we could use any of the three cosets as the 
triangle vertices and arrive at a similar situation. This amounts to a translation of the plane
by $w_1$ or $w_2$. We come back to this point in \S\ref{parityHull}.
\end{remark}

We now wish to re-triangularize the plane still using points of $Q$ as vertices, but this time 
making triangles of side length equal to $2$ using as vertices a coset of $2Q$ in $Q$.
There are four cosets of $2Q$ in $Q$ and they lead to four different ways to make
the triangularization. Fig.~\ref{cosets} shows the four types of triangles of side length $2$.
The lattices generated by the points of any one of these triangles is a coset of $2Q$
and together they make up all four cosets of $2Q$ in $Q$.

 \begin{figure}
\centering
\includegraphics[width=6cm]{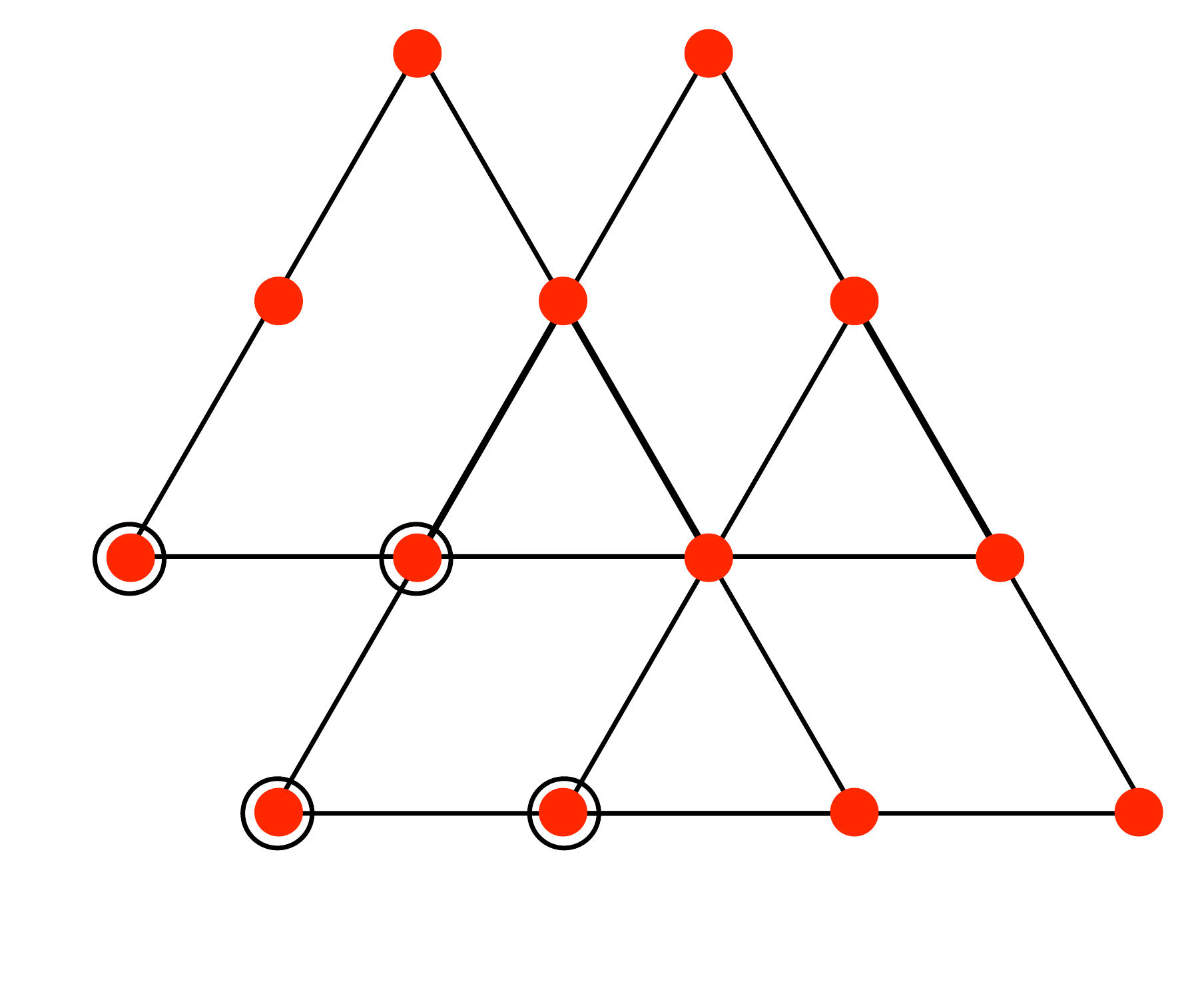}
\caption{Four superimposed triangles, each indicated by its circled
bottom lefthand vertex. The vertices of each triangle generate a different
coset of $Q$ modulo $2Q$.}
\label{cosets}
\end{figure}
Choose one of these cosets, call it $q_1+ 2Q$, where $q_1 \in Q$, and thereby triangulate the plane
with triangles of side length $2$. The centroids of the new triangles are a subset of
the original set of centroids and, in fact, together with the vertices $q_1 +2Q$
they form the coset $q_1 + 2P$. This is explained in the Fig.~\ref{newCentres}, which
also explains the important fact that the new centroids,
namely those of the new edge-length-$2$ triangles of $q_1+2Q$, make up two
cosets of $2Q$ in $q_1+ 2P$ depending on the orientation of the new 
triangles, and these orientations
are {\em opposite} to those that these points originally had. Thus we obtain 
$S^\uparrow_2 = q_1 +2w_1 + 2Q$ (which is in $w_2 +Q$\, !),
$S^\downarrow_2 = q_1 +2w_2 + 2Q$ (which is in $w_1 +Q$), and
the coset decomposition
\[q_1+ 2P = (q_1+2Q) \cup S^\uparrow_2 \cup S^\downarrow_2 \,\]
with $S^\uparrow_2 \subset S^\downarrow_1$ and 
$S^\downarrow_2 \subset S^\uparrow_1$.

\begin{figure}
\centering
\includegraphics[width=6cm]{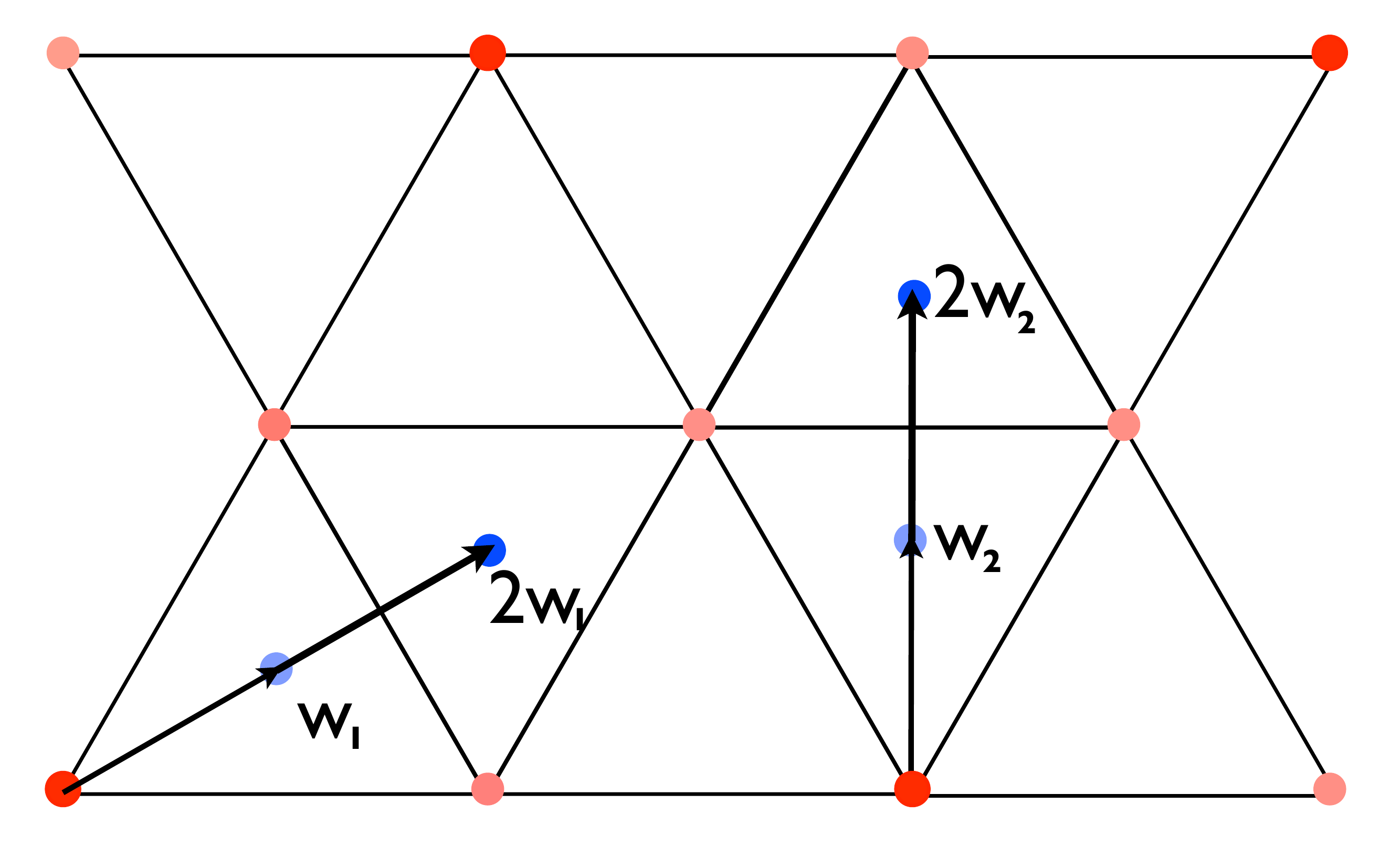}
\caption{The figure shows how the centroids (indicated with solid blue dots) of the new 
side-length-$2$ triangles
(indicated with solid red dots) are obtained as vectors from $q_1+2P$.
The two $2Q$-cosets of $q_1+2P$ which are not $q_1+2Q$ itself indicate 
the centroids of the new up and down
triangles. Notice that the orientations of the new triangles, and hence the orientations
associated with the new centroids, are opposite to the orientations associated with
these points when they were viewed as centroids of the original triangulation. This
explains why $S^\uparrow_2 \subset S^\downarrow_1$ and 
$S^\downarrow_2 \subset S^\uparrow_1$ .}
\label{newCentres}
\end{figure}

We now repeat this whole process. There are four cosets of $4Q$ in $q_1+ 2Q$ and
we select one of them, say $q_1 +q_2 + 4Q$, with $q_2 \in 2Q$, and this gives us a new triangulation
with triangles of side length $4$. Their centroids in $q_1 + q_2 + 4P$  form $4Q$-cosets
$S^\uparrow_3 \subset S^\downarrow_2$ and 
$S^\downarrow_3 \subset S^\uparrow_2$, and we have the decomposition
\[q_1+ q_2 + 4P = (q_1+ q_2 + 4Q) \cup S^\uparrow_3 \cup S^\downarrow_3 \, .\]

Continuing this way we obtain $q_1, q_2, q_3, \dots $ with $q_k \in 2^{k-1}Q$, and
sets $S^\uparrow_k, S^\downarrow_k$ with $S^\uparrow_{k+1}  \subset S^\downarrow_{k}$
and $S^\downarrow_{k+1}  \subset S^\uparrow_{k}$ for all $k=1,2, \dots$, and the partition
\begin{equation}\label{partition}
q_1+ \cdots + q_k+ 2^k P = (q_1+ \cdots + q_k + 2^kQ) \cup S^\uparrow_{k+1} \cup S^\downarrow_{k+1} \, .
\end{equation}

We have 
\begin{eqnarray}\label{Sk defined}
S^\uparrow_{k+1} &=& q_1+ \cdots + q_k + 2^k(w_1+ Q)\\
S^\downarrow_{k+1} &=& q_1+ \cdots + q_k + 2^k(w_2+ Q) \nonumber \,.
\end{eqnarray}
Explicit formulas for
$2^k w_1$ and $2^k w_2$ are given in Lemma~\ref{sFunctions}.

\medskip
We now carry out the entire construction based on an arbitrary infinite sequence
\[(q_1, q_2, \dots, q_k, \dots )\]
where $q_k \in 2^{k-1}Q$ for all $k$. This results in a pattern of overlapping triangulations
based on triangles of edge lengths $1,2,4,8, \dots$ (these are referred to as being
triangles of levels $0,1,2,3, ...$). In \S\ref{Tilings} we shall  make our tiling
out of this pattern. But certain features of the entire pattern are clear:

\begin{itemize}
\item all points involved as vertices of triangles are in $Q$;
\item all triangle centroids are in $P\backslash Q$;
\item there is no translational symmetry.
\end{itemize}
The last of these is due to the fact that there are triangles of all scales, and no
translation can respect all of these scales simultaneously.

A point $x\in P\backslash Q$ is said to {\bf have an orientation} (up or down) if there is
a positive integer $k$ such that for all $k' >k$, $x \notin S^{\uparrow}_{k'}
\cup S^{\downarrow}_{k'}$. Every element of $P\backslash Q$ is in 
$S^{\uparrow}_k$ or $S^{\downarrow}_k$ for $k=1$, and some for other values of $k$ as well. 
For the elements
$x$ which have an orientation there is a largest $k$ for which this is true and this gives its final
orientation. If $x$ has an orientation, we shall say that the {\bf level of its orientation} is $k$
if its orientation stabilizes at $k$. If it does not stabilize we shall say that 
$x$ is {\bf not oriented}. We shall see below (Prop.~ \ref{noOrientation}) what 
it means for a point not to have an orientation.\footnote{We shall introduce levels for a number
of objects that appear in this paper: points, lines, edges, triangles.}

\begin{figure}
\centering
\includegraphics[width=12cm]{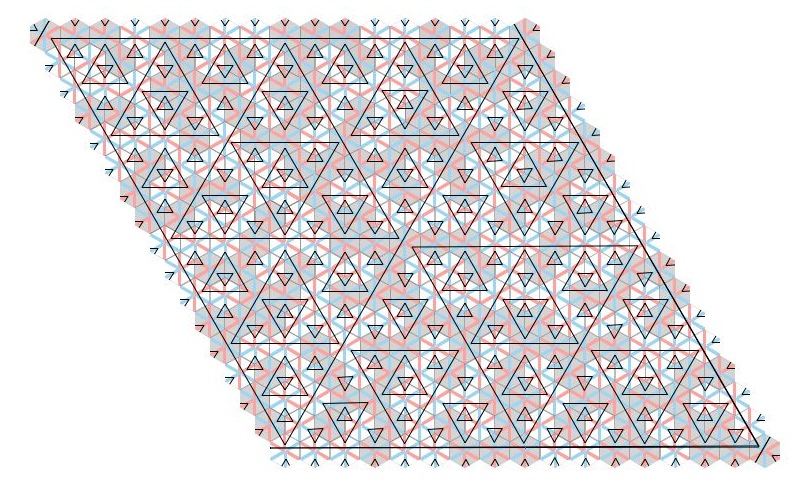}
\caption{The figure shows a pattern of triangles emerging from the construction
indicated in \S\ref{construction}, manifesting the rule {\bf R1}. The underlying hexagonal tiling is 
indicated in light and dark shades which indicate the parity of the hexagons.
The underlying diagonal shading on the hexagons manifests the rules {\bf R2}.}
\label{ST-whole-tiling}
\end{figure}

\section{The $Q$-adic completion}\label{Qadic}

In this section we create and study a completion of $P$ under the 
$Q$-adic topology. The $Q$-adic topology is the uniform topology based
on the metric on $P$ defined by $d(x,y) = 2^{-k}$ if $x-y \in 2^kQ\backslash 2^{k+1}Q$
and $d(x,y):=2$ when $x,y$ are in different cosets of $Q$. This metric is $Q$-translation
invariant.
  $\overline P$ is the completion of $P$ in this topology and $\overline Q$ is
 the closure of $Q$ in $\overline P$, which is also the completion of $Q$ in the
 $Q$-adic topology.
 
 $\overline P$ may be viewed as the set of
sequences
\[(b_1, b_2, \dots ) \]
where $b_k \in P$ for all $k$ and $b_{k+1} \equiv b_k \mod 2^k Q$.

$\overline P$ is a group under component-wise addition and $\overline Q$ is the subgroup of
all such sequences with all components in $Q$. There is the obvious coset decomposition
\[\overline P=
\overline Q \cup (w_1+\overline Q) \cup (w_2 +\overline Q)\, ,\]
so $\overline Q$ has index $3$ in $\overline P$.
We note that $\overline Q$ and $\overline P$ are compact topological groups.

We have $i: P \longrightarrow \overline P$ via
\[ b \mapsto (b,b,b,\dots) \,.\]
We often identify $P$ as a subgroup of $\overline P$ via the embedding $i$.

Note that the construction of expanding triangles of \S\ref{construction} depends on the choice of the element
$(q_1, q_2, \dots )$, where $q_k \in 2^{k-1}Q$. Then we can obtain the compatible sequence
\[{\bf q} = (q_1, q_1+q_2, \dots,   q_1 + q_2 + \cdots + q_k, \dots) \in \overline Q \]
and thus we can identify each possible construction with an element of $\overline Q$.
Let $\CalT({\bf q})$ denote the pattern of triangles arising from ${\bf q}\in \overline Q$.

Let $\mu$ denote the unique Haar measure on $\overline P$ for which 
$\mu(\overline P) =1$. The key feature of $\mu$ is that $\mu(p +2^k \overline Q) = 2^{-k}/3$
for all $p \in \overline P$. We note that $P \subset \overline P$ is countable and has measure
$0$, and that $\mu(\overline Q) = \frac{1}{3}$ and $\mu(\overline{S^\uparrow_k}) = 
\mu(\overline{S^\downarrow_k})
=2^{-k+1}/3$.

\begin{remark} \label{subtlePoint}
We should note a subtle point here. In $\overline Q$ one can divide by $3$.
In fact, for all ${\bf x}\in \overline Q$,
$ -\lim_{k\to\infty} ({\bf x} + 4{\bf x} + 4^2{\bf x} +\dots + 4^k{\bf x}) $ exists since 
$4^k{\bf x} \in 2^{2k}\overline Q$, and 
\[-3 \lim_{k\to\infty} ({\bf x} + 4{\bf x} + 4^2{\bf x} +\dots + 4^k{\bf x})
= \lim_{k\to\infty} (1-4)({\bf x} + 4{\bf x} + 4^2{\bf x} +\dots + 4^k{\bf x}) = \lim_{k\to\infty} (1-4^{k+1}) {\bf x}= {\bf x} \,.\]
Thus we can find an element ${\bf w_1}$ of $\overline Q$ corresponding to $w_1 = \frac{2}{3}a_1 + \frac{1}{3}a_2$ and similarly ${\bf w_2} \in \overline Q$ corresponding to $w_2$ .
However, our view is that $P = Q \cup (w_1 +Q) \cup (w_2+Q)$ and $\overline P$ is the $Q$-adic
completion of this, with each of the three cosets leading to a different coset of $\overline Q$ in $\overline P$. 
Thus $w_1 -{\bf w_1} \neq 0$ but $3(w_1 -{\bf w_1}) =0$ and we conclude that $\overline P$ has $3$-torsion. 

Two examples of this are important in what follows. Define $s_1^{(-1)} :=0$ and
$s_1^{(k)} :=  a_1 + 4 a_1 + 4^2 a_1 + \cdots + 4^{k} a_1) $ 
for $k= 0, 1, \dots$. 
and similarly $s_2^{(k)}$ based on $a_2$. 
Their limits are denoted by ${\bf s_1}, {\bf s_2}$ respectively. They lie in $\overline Q$.
\end{remark}

\begin{lemma} \label{sFunctions}
 For all $k = 0, 1, \dots$,
\begin{eqnarray*}
2^{2k} w_1 &=& w_1 + s_2^{(k-1)} + 2 s_1^{(k-1)}\\
2^{2k+1} w_1 &=& w_2 + s_1^{(k)} + 2 s_2^{(k-1)} \, .
\end{eqnarray*}
Similarly for $2^m w_2$, interchanging the indices $1,2$.

In particular $\lim_{k\to\infty} 2^{2k} w_1 = w_1 + {\bf s_2} + 2{\bf s_1}$
and $\lim_{k\to\infty} 2^{2k+1} w_1 = w_2 + {\bf s_1} + 2{\bf s_2}$.
Furthermore, $3(w_1 + {\bf s_2} + 2{\bf s_1}) = 0 = 3( w_2 + {\bf s_1} + 2{\bf s_2})$.

\end{lemma}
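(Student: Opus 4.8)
The plan is to prove the two displayed identities first as exact vector equalities in $Q$ (equivalently in $\RR^2$), then to pass to the $Q$-adic limit inside $\overline P$, and finally to read off the two torsion relations. The statement is elementary throughout; the only points needing care are the index-shifting in the partial sums $s_i^{(k)}$ and the justification that the limits may be taken inside the identities.

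First I would record the two base relations that follow at once from $w_1 = \frac{2}{3}a_1 + \frac{1}{3}a_2$ and $w_2 = \frac{1}{3}a_1 + \frac{2}{3}a_2$:
\[ 2 w_1 = w_2 + a_1, \qquad 2 w_2 = w_1 + a_2. \]
These are the structural heart of the lemma: multiplication by $2$ interchanges $w_1 \leftrightarrow w_2$ (and $a_1 \leftrightarrow a_2$), which is exactly the $1 \leftrightarrow 2$ symmetry asserted in the statement. Combined with the obvious recursion $s_i^{(k)} = a_i + 4\, s_i^{(k-1)}$ (valid for $k \ge 0$ with $s_i^{(-1)} = 0$), the identities then follow by one induction on $k$. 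The base case uses $s_i^{(-1)}=0$, giving $2^0 w_1 = w_1$ and $2 w_1 = w_2 + a_1 = w_2 + s_1^{(0)}$. For the inductive step, assuming $2^{2k} w_1 = w_1 + s_2^{(k-1)} + 2 s_1^{(k-1)}$, I would multiply by $2$ and apply $2 w_1 = w_2 + a_1$ together with $a_1 + 4 s_1^{(k-1)} = s_1^{(k)}$ to obtain the odd-exponent formula; multiplying once more by $2$ and using $2 w_2 = w_1 + a_2$ and $a_2 + 4 s_2^{(k-1)} = s_2^{(k)}$ returns the even-exponent formula at level $k+1$. The assertion for $2^m w_2$ is the same after interchanging $1$ and $2$. (One could instead substitute the closed form $s_i^{(k)} = \frac{4^{k+1}-1}{3}\, a_i$ and compare $a_1$- and $a_2$-coefficients, but the inductive route keeps the symmetry visible.)

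Next I would take $Q$-adic limits in $\overline P$. The sequence $(s_i^{(k)})_k$ is Cauchy, since consecutive differences equal $4^{k+1} a_i \in 2^{2k+2}Q$; hence $s_i^{(k)} \to {\bf s_i}$ in the complete group $\overline Q$, as already noted in Remark~\ref{subtlePoint}. Likewise $2^{2(k+1)} w_1 - 2^{2k} w_1 = 3 \cdot 4^k w_1 = 4^k(2 a_1 + a_2) \in 2^{2k}Q$, so $(2^{2k} w_1)_k$ converges in $\overline P$, and similarly the odd subsequence. Because the group operations of $\overline P$ are continuous, the finite linear identities pass termwise to the limit, yielding $\lim_{k} 2^{2k} w_1 = w_1 + {\bf s_2} + 2 {\bf s_1}$ and $\lim_{k} 2^{2k+1} w_1 = w_2 + {\bf s_1} + 2 {\bf s_2}$.

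Finally, for the torsion relations I would evaluate $3 {\bf s_i}$: from $3 s_i^{(k)} = (4^{k+1}-1) a_i$ and $4^{k+1} a_i \in 2^{2k+2}Q \to 0$, continuity gives $3 {\bf s_i} = -a_i$. Substituting, and using $3 w_1 = 2 a_1 + a_2$ and $3 w_2 = a_1 + 2 a_2$,
\[ 3(w_1 + {\bf s_2} + 2 {\bf s_1}) = 3 w_1 + 3{\bf s_2} + 2(3{\bf s_1}) = (2 a_1 + a_2) - a_2 - 2 a_1 = 0, \]
and symmetrically $3(w_2 + {\bf s_1} + 2 {\bf s_2}) = (a_1 + 2 a_2) - a_1 - 2 a_2 = 0$. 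I do not expect a genuine obstacle: the only delicate step is the limit, where one must check both that the vector sequences are $Q$-adically Cauchy (so their limits exist in $\overline P$) and that continuity of addition legitimizes passing the finite identities to the limit; everything else is bookkeeping of exponent parities and index shifts.
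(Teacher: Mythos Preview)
Your proof is correct and follows essentially the same route as the paper: both arguments use the base relations $2w_1 = w_2 + a_1$ and $2w_2 = w_1 + a_2$, run the same induction on $k$ via the recursion $s_i^{(k)} = a_i + 4 s_i^{(k-1)}$, and then pass to $Q$-adic limits to obtain the torsion relations from $3{\bf s_i} = -a_i$. Your version is slightly more explicit about why the limits exist (the Cauchy checks), but there is no substantive difference in strategy.
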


\begin{proof} From the definitions, 
$2w_1 = w_2 +a_1$ and $2w_2 = w_1 +a_2$.
This gives the case $k=0$ of the Lemma. Now proceeding by induction,
\[2^{2k} w_1= 2(w_2 + s_1^{(k-1)} + 2 s_2^{(k-2)}) = w_1 + a_2 + 2s_1^{(k-1)} + 4 s_2^{(k-2)} 
= w_1 + s_2^{(k-1)} + 2 s_1^{(k-1)} \, ,\] as required. Similarly
\[2^{2k+1} w_1 = 2(w_1 + s_2^{(k-1)} + 2 s_1^{(k-1)}) = w_2 +a_1 + 2s_2^{(k-1)} + 4 s_1^{(k-1)}
= w_2 + s_1^{(k)} + 2 s_2^{(k-1)}\,.\]

Taking the limits and using the formula for multiplication by $3$ at the beginning
of Remark~\ref{subtlePoint}, we find that $-3({\bf s_2} + 2{\bf s_1}) = a_2 + 2a_1 = 3w_1$
and similarly with the indices $1, 2$ interchanged.
\end{proof}

Consider what happens if there is a point $x\in P\backslash Q$ which does not have orientation.
This means that there is an infinite sequence $k_1 < k_2 < \cdots$ with 
$x \in S^{\uparrow}_{k_j} \cup S^{\downarrow}_{k_j}$. 
Then from \eqref{Sk defined},
$x \in \left((q_1 + \cdots + q_{k_j-1} + 2^{k_j-1}(w_1 +Q)\right) \cup \left((q_1 + \cdots + q_{k_j-1} + 2^{k_j-1}(w_2 +Q)
\right)$ for each $k_j$. This means $x = {\bf q} + w_1 + {\bf s_2} + 2{\bf s_1}$ or
$x = {\bf q} + w_2 + {\bf s_1} + 2{\bf s_2}$. 

\begin{prop} \label{noOrientation}
$\CalT({\bf q})$ has at most one point without orientation. A point without orientation
can occur if and only if
${\bf q} \in -{\bf s_2} - 2{\bf s_1} + Q$ or ${\bf q} \in -{\bf s_1} - 2{\bf s_2} + Q$. These two
families are countable and disjoint.
\end{prop}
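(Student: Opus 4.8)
The plan is to pin down the no-orientation point(s) explicitly and then decide, by a torsion argument, exactly when such a point actually lies in $P$. Throughout write $Q_m := q_1 + \cdots + q_m$, so that $Q_m \to {\bf q}$ in $\overline Q$, and set $\alpha := w_1 + {\bf s_2} + 2{\bf s_1}$ and $\beta := w_2 + {\bf s_1} + 2{\bf s_2}$, the two $3$-torsion elements produced in Lemma~\ref{sFunctions}. The two facts I would keep in front of me are: $\alpha \in w_1 + \overline Q$ while $\beta \in w_2 + \overline Q$, so $\alpha \neq \beta$; and $3\alpha = 3\beta = 0$, so $\alpha - \beta$ is a \emph{nonzero} $3$-torsion element of $\overline P$, whereas $i(P)\cong P$ is torsion-free.

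First I would sharpen the paragraph preceding the statement into a clean limit identity. Using the nesting $S^\uparrow_{k+1} \subset S^\downarrow_k$ and $S^\downarrow_{k+1} \subset S^\uparrow_k$, a point lying in $S^\uparrow_{k'} \cup S^\downarrow_{k'}$ for infinitely many $k'$ in fact lies in $S^\uparrow_k \cup S^\downarrow_k$ for \emph{every} $k$, with orientation alternating at each level; since $S^\uparrow_k$ and $S^\downarrow_k$ are disjoint cosets of $2^{k-1}Q$, the level-$1$ coset of $x$ then determines the full pattern. Passing to the $Q$-adic limit along $Q_{k-1} \to {\bf q}$ together with $\lim 2^{2k}w_1 = \alpha$ and $\lim 2^{2k+1}w_1 = \beta$ (Lemma~\ref{sFunctions}), the two alternating patterns force $i(x) = {\bf q} + \alpha$ when $x \in w_1 + Q$ and $i(x) = {\bf q} + \beta$ when $x \in w_2 + Q$. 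Conversely, whenever ${\bf q}+\alpha \in i(P)$, its preimage $x$ satisfies, for all $k$, $x - Q_{2k} - 2^{2k}w_1 \in 2^{2k}Q$ (because both $\alpha - 2^{2k}w_1$ and ${\bf q} - Q_{2k}$ lie in $2^{2k}\overline Q$), i.e. $x \in S^\uparrow_{2k+1}$, so $x$ has no orientation; symmetrically for $\beta$.

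With this in hand the three assertions follow. For \emph{at most one point}: a no-orientation $x$ lies in $w_1+Q$ or $w_2+Q$, hence $i(x)$ is forced to be ${\bf q}+\alpha$ or ${\bf q}+\beta$ accordingly; if both occurred then $\alpha-\beta = ({\bf q}+\alpha)-({\bf q}+\beta)$ would lie in the group $i(P)$, contradicting that $\alpha-\beta$ is nonzero $3$-torsion. For the \emph{characterization}: since ${\bf q}+\alpha \in w_1+\overline Q$ and $i(P)\cap(w_1+\overline Q) = i(w_1+Q)$, cancelling $w_1$ and using $\alpha = w_1+{\bf s_2}+2{\bf s_1}$ unwinds ${\bf q}+\alpha\in i(P)$ to ${\bf q}\in -{\bf s_2}-2{\bf s_1}+Q$, and symmetrically ${\bf q}+\beta\in i(P)$ to ${\bf q}\in -{\bf s_1}-2{\bf s_2}+Q$. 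Finally, each family is a translate of the countable group $Q$, hence countable; and their intersection being nonempty would force ${\bf s_2}-{\bf s_1}\in Q$, whence (applying $3\,\cdot$ and using $3{\bf s_1}=-a_1$, $3{\bf s_2}=-a_2$ from Remark~\ref{subtlePoint}) $a_1-a_2\in 3Q$, which is false since $a_1-a_2$ has coordinates $(1,-1)$ in the basis $a_1,a_2$. So the families are disjoint.

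I expect the main obstacle to be bookkeeping rather than depth: one must handle the identification of $P$ with $i(P)\subset\overline P$ scrupulously, because the $3$-torsion of $\overline P$ (the fact that $\alpha,\beta\neq 0$ although $3\alpha=3\beta=0$) is precisely what separates the two candidate points and makes the statement nontrivial. The two delicate points will be (i) upgrading the alternating-coset data to a genuine equality $i(x)={\bf q}+\alpha$ in $\overline P$ — this rests on the estimates $\alpha - 2^{2k}w_1 \in 2^{2k}\overline Q$ and ${\bf q}-Q_{2k}\in 2^{2k}\overline Q$ that make the limits converge at the right rate — and (ii) tracking carefully which elements live in $P$ and which merely in $\overline P$, so that torsion-freeness of $i(P)$ may be invoked legitimately in both the uniqueness and the disjointness arguments.
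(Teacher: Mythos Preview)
Your argument is correct and follows the same overall line as the paper's: identify a no-orientation point as one of the two specific elements ${\bf q}+\alpha$ or ${\bf q}+\beta$ of $\overline P$, then read off the conditions on ${\bf q}$ by checking when these land in $i(P)$.

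Where you differ from the paper is in the justification of ``at most one'' and of disjointness. The paper is terse here: it simply observes that the two candidate points sit in the distinct cosets $w_1+Q$ and $w_2+Q$ and declares the two families disjoint. You instead make the mechanism explicit via torsion: $\alpha-\beta$ is a nonzero $3$-torsion element of $\overline P$, so it cannot lie in the torsion-free subgroup $i(P)$, which immediately blocks both candidates from being realized simultaneously; and for disjointness you unwind to ${\bf s_2}-{\bf s_1}\in Q$ and kill it with $a_1-a_2\notin 3Q$. This buys you a self-contained argument that does not rely on first establishing ``at most one'' to get disjointness (or vice versa), whereas the paper's one-line argument leaves the reader to supply exactly this kind of check. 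Your bookkeeping about $i(P)$ versus $\overline P$ is precisely what is needed, and your anticipated delicate point (i) about the rate of convergence is handled correctly by the estimates you name.
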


\begin{proof} If $x \in P\backslash Q$ does not have an orientation then either
$x = {\bf q} + w_1 + {\bf s_2} + 2{\bf s_1}$ and $-w_1 + x \in Q$, 
which gives one of the cases; or
$x = {\bf q} + w_2 + {\bf s_1} + 2{\bf s_2}$, which gives the other. Conversely, in either
case we have points without orientation. 
Since in one case $x \in w_1 + Q$ and in the other case
$x\in w_2 + Q$, we see that the two families are disjoint.
\end{proof}

\begin{remark}
We do not need to go into the exact description of the orientations of 
triangles, but confine ourselves to a few remarks here. For any fixed ${\bf q}$, define the sequence of sets $W^\uparrow_k$ and $W^\downarrow_k$, $k=1, 2, \dots$, inductively
by $W^\uparrow_1 = S^\uparrow_1$ and 
\[ W^\uparrow_{k+1} = (W^\uparrow_k \backslash S^\downarrow_{k+1}) \cup S^\uparrow_{k+1}\,,\]
and similarly for $W^\downarrow_{k}$. In other words we put together into $W^\uparrow_k$ all the points which are oriented upwards at step $k$, and likewise all that are oriented downwards at step $k$.

Since $S^\downarrow_{k+1}$ and $S^\uparrow_{k+1}$ have measure $2^{-k}/3$
we see that the sets $W^\uparrow_k$ change by less and less as $k$ increases.
Furthermore it is clear that $\mu(W^\uparrow_k) = 1/3$ for all $k$.

\begin{prop} \label{Wks}
 For all $k$ the sets $\overline{W^\uparrow_k}$ and $\overline{W^\downarrow_k}$ are clopen
 and disjoint. They each have measure $1/3$. \qed
\end{prop}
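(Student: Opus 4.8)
The plan is to prove everything first at the level of $P$, where the sets $W^\uparrow_k,W^\downarrow_k$ are combinatorially transparent, and then to transfer clopenness, disjointness and measure to $\overline P$ by taking closures. The organizing observation is that each $W^\uparrow_k$ (and symmetrically each $W^\downarrow_k$) is a \emph{finite union of cosets of $2^{k-1}Q$ in $P$}. I would prove this by induction on $k$: the base case $W^\uparrow_1=S^\uparrow_1=w_1+Q$ is a single coset of $Q=2^0Q$; for the step, $S^\uparrow_{k+1}$ and $S^\downarrow_{k+1}$ are single cosets of $2^kQ$ by \eqref{Sk defined}, and since a union of cosets of $2^{k-1}Q$ is a fortiori a union of cosets of $2^kQ$ (because $[2^{k-1}Q:2^kQ]=4$), the defining formula $W^\uparrow_{k+1}=(W^\uparrow_k\setminus S^\downarrow_{k+1})\cup S^\uparrow_{k+1}$ displays $W^\uparrow_{k+1}$ as a union of cosets of $2^kQ$. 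The union is finite since $[P:2^{k-1}Q]=3\cdot 4^{k-1}<\infty$. The closure of a coset $c+2^{k-1}Q$ is $c+2^{k-1}\overline Q$, and $2^{k-1}\overline Q=\overline{2^{k-1}Q}$ is an open, hence clopen, subgroup of $\overline P$; a finite union of clopen sets is clopen, which gives the clopenness of $\overline{W^\uparrow_k}$ and $\overline{W^\downarrow_k}$.

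The technical crux is to control how closures interact with the Boolean operations above, and for this I would isolate the separation lemma $2^m\overline Q\cap P=2^mQ$ for every $m$. This follows from $\overline Q\cap P=Q$ (a consequence of the coset decomposition $\overline P=\overline Q\cup(w_1+\overline Q)\cup(w_2+\overline Q)$) together with $2^m\overline Q\cap Q=2^mQ$, the latter being immediate from the $Q$-adic metric since a point of $Q$ lying at distance $0$ from $2^mQ$ already lies in $2^mQ$. The consequence I need is that two \emph{distinct} cosets of $2^mQ$ in $P$ have disjoint closures in $\overline P$; hence for our coset-unions, closure commutes with finite union and with the relevant set differences.

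Finally I would verify by induction that $W^\uparrow_k$ and $W^\downarrow_k$ partition $P\setminus Q$. The inclusions $S^\uparrow_k\subseteq W^\uparrow_k$ and $S^\downarrow_k\subseteq W^\downarrow_k$ are built into the inductive definition, and combined with the nesting $S^\downarrow_{k+1}\subset S^\uparrow_k$ and $S^\uparrow_{k+1}\subset S^\downarrow_k$ this shows that the update step removes from $W^\uparrow_k$ exactly the piece $S^\downarrow_{k+1}$ that it hands over to $W^\downarrow_{k+1}$, and conversely, so the disjoint partition is preserved. Since $W^\uparrow_k$ and $W^\downarrow_k$ are then disjoint unions of full $2^{k-1}Q$-cosets, the separation lemma yields $\overline{W^\uparrow_k}\cap\overline{W^\downarrow_k}=\emptyset$. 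For the measure I would use the same bookkeeping to get the telescoping identity $\mu(\overline{W^\uparrow_{k+1}})=\mu(\overline{W^\uparrow_k})-\mu(\overline{S^\downarrow_{k+1}})+\mu(\overline{S^\uparrow_{k+1}})=\mu(\overline{W^\uparrow_k})$, invoking $\mu(\overline{S^\uparrow_{k+1}})=\mu(\overline{S^\downarrow_{k+1}})$; together with the base value $\mu(\overline{W^\uparrow_1})=\mu(w_1+\overline Q)=1/3$ this forces $\mu(\overline{W^\uparrow_k})=1/3$ for all $k$, and symmetrically for $W^\downarrow_k$.

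I expect the main obstacle to be precisely the closure/Boolean-operation compatibility of the second paragraph. Inside $P$ the partition and the add/remove bookkeeping are elementary, but one must make sure that passing to closures in $\overline P$ neither merges the up- and down-pieces nor inflates a set difference into something larger; both are exactly what the separation lemma $2^m\overline Q\cap P=2^mQ$ rules out. Once that lemma is in hand, clopenness, disjointness and the constant measure $1/3$ all fall out of the coset structure.
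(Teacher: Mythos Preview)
Your proposal is correct and is precisely the natural way to fill in this proposition. In the paper the proposition is stated with an immediate \qed; the only argument given is the sentence preceding it, namely that $S^\uparrow_{k+1}$ and $S^\downarrow_{k+1}$ each have measure $2^{-k}/3$, so the add/remove step preserves $\mu(W^\uparrow_k)=1/3$. Your induction showing that $W^\uparrow_k$ and $W^\downarrow_k$ are finite disjoint unions of $2^{k-1}Q$-cosets partitioning $P\setminus Q$, together with the observation that such cosets have clopen closures with pairwise disjoint closures, is exactly the argument the paper is taking for granted; your separation lemma $2^m\overline Q\cap P=2^mQ$ is the right device to make the closure/Boolean-operation compatibility rigorous, and your telescoping measure computation matches the paper's one-line justification.
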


For each ${\bf q} \in \overline Q$ we define $W^\uparrow({\bf q}):= \overline{\{x: x \;\mbox{which have up orientation}\}}$,
and similarly for $W^\downarrow({\bf q})$.

\begin{prop} \label{Wupdownarrows} 
$\overline P = \overline Q \cup W^\uparrow({\bf q}) \cup W^\downarrow({\bf q}) $ where
$\overline Q$ is disjoint from $W^\uparrow({\bf q}) \cup W^\downarrow({\bf q}) $, and  
\[W^\uparrow ({\bf q}) \cap W^\downarrow ({\bf q}) =
\{{\bf q}+ w_1 + {\bf s_2} + 2{\bf s_1}, {\bf q}+w_2 + {\bf s_1} + 2{\bf s_2}  \}\, .  \] 
$W^\uparrow({\bf q})$ is the union of an open set and 
$\{{\bf q}+ w_1 + {\bf s_2} + 2{\bf s_1}, {\bf q}+w_2 + {\bf s_1} + 2{\bf s_2} \,. \}$
The same goes for $W^\downarrow({\bf q})$. In particular $W^\uparrow({\bf q})$
and $W^\downarrow({\bf q})$ are the closures of their interiors.
Both  $W^\uparrow({\bf q})$ and $W^\downarrow({\bf q})$ are sets of measure $1/3$.\qed
\end{prop}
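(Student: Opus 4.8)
The plan is to analyze, for each point $y\in\overline P$, the eventual behaviour of its label in the clopen partition that Proposition~\ref{Wks} supplies at every level. First I record that for each $k$ one has a genuine partition $\overline P=\overline Q\sqcup\overline{W^\uparrow_k}\sqcup\overline{W^\downarrow_k}$ into clopen sets: the three pieces are pairwise disjoint (Proposition~\ref{Wks}, together with the observation that each $W^\uparrow_k,W^\downarrow_k$ lies in $P\setminus Q$ so its closure misses $\overline Q$), and their measures $\tfrac13+\tfrac13+\tfrac13$ sum to $\mu(\overline P)=1$. This already yields the asserted disjointness of $\overline Q$ from $W^\uparrow({\bf q})\cup W^\downarrow({\bf q})$, since the latter are built from the $\overline{W^\uparrow_k},\overline{W^\downarrow_k}$.

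Next I would control how the partition changes with $k$. From the definition of $W^\uparrow_{k+1}$ and the inclusions $S^\uparrow_{k+1}\subset S^\downarrow_k$, $S^\downarrow_{k+1}\subset S^\uparrow_k$ of \S\ref{construction} one gets $\overline{W^\uparrow_{k+1}}\symdiff\overline{W^\uparrow_k}\subseteq \overline{S^\uparrow_{k+1}}\cup\overline{S^\downarrow_{k+1}}$. Writing $p_1:={\bf q}+w_1+{\bf s_2}+2{\bf s_1}$ and $p_2:={\bf q}+w_2+{\bf s_1}+2{\bf s_2}$ for the two points of Proposition~\ref{noOrientation}, the key computation is that each $\overline{S^\uparrow_j}$ and $\overline{S^\downarrow_j}$ is a coset of $2^{j-1}\overline Q$, that by Lemma~\ref{sFunctions} exactly one of the two contains $p_1$ and the other $p_2$, and that these cosets nest as $j$ grows. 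Consequently
\[\bigcup_{j\ge J}\bigl(\overline{S^\uparrow_j}\cup\overline{S^\downarrow_j}\bigr)=\bigl(p_1+2^{J-1}\overline Q\bigr)\cup\bigl(p_2+2^{J-1}\overline Q\bigr),\]
a shrinking pair of clopen balls about $p_1,p_2$. Hence every $y\notin\{p_1,p_2\}$ changes label only finitely often, so its label stabilizes; letting $U^\uparrow$ (resp.\ $U^\downarrow$) be the set of $y$ whose label is eventually $\uparrow$ (resp.\ $\downarrow$), the displayed formula shows $U^\uparrow=\bigcup_J\bigl(\overline{W^\uparrow_J}\setminus[(p_1+2^{J-1}\overline Q)\cup(p_2+2^{J-1}\overline Q)]\bigr)$ is open, and likewise $U^\downarrow$. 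Since, for points of $P$, having up orientation is the same as eventually lying in $\overline{W^\uparrow_k}$, i.e.\ lying in $U^\uparrow$, and $U^\uparrow$ is open with $P$ dense, we get $W^\uparrow({\bf q})=\overline{U^\uparrow}$ (and similarly for $\downarrow$). The resulting trichotomy $\overline P\setminus\overline Q=U^\uparrow\sqcup U^\downarrow\sqcup\{p_1,p_2\}$ then gives $\overline P=\overline Q\cup W^\uparrow({\bf q})\cup W^\downarrow({\bf q})$.

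The main obstacle is the precise boundary behaviour at $p_1$ and $p_2$, namely that each of them lies in $\overline{U^\uparrow}\cap\overline{U^\downarrow}$ and in neither $U^\uparrow$ nor $U^\downarrow$. Here I would exploit that Lemma~\ref{sFunctions} produces two \emph{distinct} limits $\lim 2^{2k}w_1$ and $\lim 2^{2k+1}w_1$: this forces $p_1$ to sit in the up-ball $\overline{S^\uparrow_j}$ for $j$ of one parity and in the down-ball $\overline{S^\downarrow_j}$ for the other parity (and symmetrically for $p_2$). Fixing a level $j$ of the parity for which $p_1\in\overline{S^\uparrow_j}=p_1+2^{j-1}\overline Q$, any point of the annulus $(p_1+2^{j-1}\overline Q)\setminus(p_1+2^{j}\overline Q)$ is labelled $\uparrow$ at level $j$ and, being outside both $p_1+2^{j}\overline Q$ and $p_2+2^{j}\overline Q$, is never relabelled afterwards; such points lie in $U^\uparrow$ and converge to $p_1$ as $j\to\infty$, so $p_1\in\overline{U^\uparrow}$. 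Running the same argument at the opposite parity places $p_1\in\overline{U^\downarrow}$, and symmetrically for $p_2$. Because $U^\uparrow\cap U^\downarrow=\emptyset$, this pins down $\overline{U^\uparrow}=U^\uparrow\cup\{p_1,p_2\}$, whence $W^\uparrow({\bf q})\cap W^\downarrow({\bf q})=\{p_1,p_2\}$, the description of $W^\uparrow({\bf q})$ as an open set together with $\{p_1,p_2\}$, and---since $p_1,p_2$ are limits of $U^\downarrow$ and so not interior---the identity $\overline{\mathrm{int}\,W^\uparrow({\bf q})}=\overline{U^\uparrow}=W^\uparrow({\bf q})$.

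Finally the measure statement is immediate from the stabilization estimate: $\overline{W^\uparrow_k}\symdiff U^\uparrow$ is contained in $(p_1+2^{k}\overline Q)\cup(p_2+2^{k}\overline Q)$, of measure $2\cdot2^{-k}/3\to0$, so $\mu(U^\uparrow)=\lim_k\mu(\overline{W^\uparrow_k})=\tfrac13$, and adjoining the null set $\{p_1,p_2\}$ leaves $\mu(W^\uparrow({\bf q}))=\tfrac13$; the same holds for $W^\downarrow({\bf q})$.
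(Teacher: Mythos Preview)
Your proof is correct. The paper states this proposition without proof (the \qed\ appended directly to the statement signals that the authors regard it as following from the preceding remark and Proposition~\ref{Wks}), so there is no argument in the paper to compare yours against. Your approach---tracking the clopen label of each point at level $k$ via the partition $\overline P=\overline Q\sqcup\overline{W^\uparrow_k}\sqcup\overline{W^\downarrow_k}$, showing that the symmetric differences $\overline{W^\uparrow_{k+1}}\symdiff\overline{W^\uparrow_k}$ are contained in shrinking clopen balls about the two exceptional points $p_1,p_2$, and then reading off the topological and measure-theoretic consequences---is exactly the argument the paper's setup invites, and you have supplied the details carefully. The boundary analysis at $p_1,p_2$, using the parity alternation of $2^kw_1$ between the cosets $w_1+\overline Q$ and $w_2+\overline Q$ from Lemma~\ref{sFunctions}, is the heart of the matter and you handle it correctly: each annulus $(p_i+2^{j-1}\overline Q)\setminus(p_i+2^j\overline Q)$ is absorbed alternately into $U^\uparrow$ and $U^\downarrow$, which places both $p_1$ and $p_2$ on the common boundary. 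The identification $W^\uparrow({\bf q})=\overline{U^\uparrow}$ via the density of $P$ in the open set $U^\uparrow$ is clean, and the measure computation from $\mu(\overline{W^\uparrow_k})=\tfrac13$ together with the vanishing measure of the change sets is the right way to finish.
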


\end{remark}

\section{The Tiles}\label{Tilings}

 Let us assume that we have carried out a triangulation $\CalT({\bf q})$ as described
 in  \S\ref{construction}. We now
 have an overlaid pattern of equilateral triangles of side lengths
$1,2,4, \dots$. Each of these triangles has vertices in $Q$ and its centroid 
in $P\backslash Q$. The points of the two cosets of $P$ different from $Q$ (shown
as blue points in Fig.~\ref{hex}) form the vertexes of a tiling of hexagons made from the
triangulation, see Fig.~\ref{triangulation}. This tiling, with the tiles
suitably marked, is the tiling that we wish to understand. 
Our objective is to give each hexagon of the tiling markings in the form of a black
stripe and three colored diagonals as shown in Fig.~\ref{two-basic-tiles}.

Apart from the lines of the triangulation (which give rise to short diagonals 
of the hexagons of the tiling) we also have the lines on which the long diagonals of
the hexagons lie and which carry the color. To distinguish these sets of lines we 
call the triangulation lines
a-{\bf lines} (since they are in the directions $a_1, a_2, a_1+a_2$) and 
the other set of lines w-{\bf lines} (since they are in the directions $w_1, w_2, w_2-w_1$). 
We also call the w-lines  {\bf coloring lines}, since they are the ones carrying the
colors red and blue. The w-lines pass through
the centroids of the triangles of the triangulation. We say that a w-line has {\bf level} $k$ if
there are centroids of level $k$ triangles on it, but none of any higher level. We shall discuss
the possibility of w-lines that do not have a level in this sense below. Note that every point of
$P\backslash Q$ is the centroid of some triangle, some of several, or even many!

There are two steps required to produce the markings on the tiles. One is to shift
triangle edges off center so as to produce the appropriate stripes on the tiles. We refer
to this step as {\em edge shifting}. The second is to appropriately color the main diagonals of each
tile. This we refer to as {\em coloring}. The two steps can be made in either order.
However, each of the two steps requires certain generic aspects of the triangulation to be
respected in order to be carried out to completion. We  first discuss the nature of these generic
conditions and then finish this section by showing how edge shifting is carried out. 

We need to understand the structure of the various lines (formed from the edges of the 
various sized triangles) that pass through each hexagon. Let us say that an \emph{element} of $Q$
is of {\bf level} $k$ if it is a vertex of a triangle of edge length $2^k$ but is not
a vertex of any longer edge length. Similarly an \emph{edge} of a triangle is of
{\bf level} $k$ if it is of length $2^k$, and an $a$-line (made up of edges) is of {\bf level}
$k$ if the longest edges making it up are of length $2^k$. All lines of all levels
are made from the original set of lines arising from the original triangulation 
by triangles of edge length $1$, so a line of level $k$ has edges of lengths
$1, 2, \dots, 2^k$ on it.

\bigskip
\begin{figure}
\centering
\includegraphics[width=10 cm]{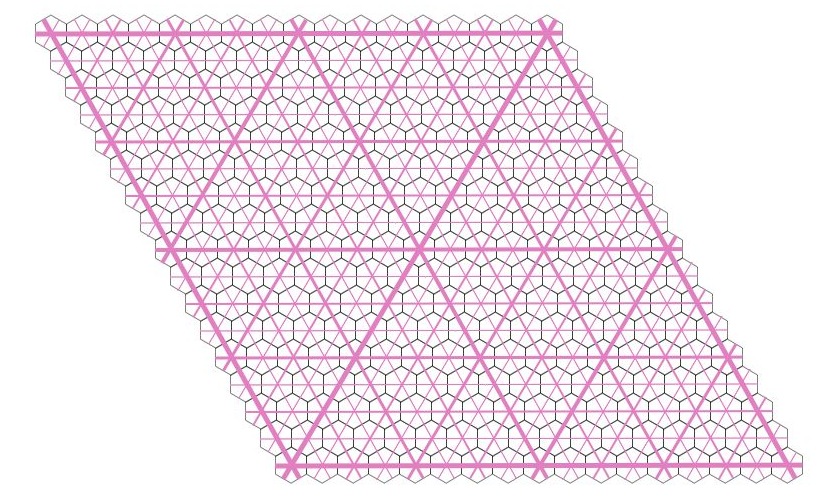}
\caption{A partial triangulation of the plane overlaid on the basic lattice
of hexagons which will make up the tiles. The levels of the triangles are indicated
by increasing thickness. One can clearly see triangles of levels $0,1,2,3,4$ and one can
also see how triangle edges of level $k$ ultimately become edges passing through the
interior of triangles of level $k+1$. This will be used to make the shifting of edges later on.}
\label{triangulation}
\end{figure}

The word `level' occurs in a variety of senses in the paper. These are summarized in Table~\ref{levelTable}. 

\begin{table}[htdp]
\caption{Uses of the word `level' $k$ and section number where it is defined. 
If there is no such $k$ the level is infinite.}
\begin{center}
\begin{tabular}{|l|l|}\hline
of a triangle & \S\ref{construction} $k$ if the side length is $2^k$, \\
& ~~~ where a side length $1=2^0$ is the length of $a_1$ and $a_2$\\
of orientation of $x\in P$ & \S\ref{construction} $k$ at which $x$ stops switching between 
$S_k^\uparrow$ and $S_k^\downarrow$\\
of a $w$-line & \S\ref{Tilings} max. $k$ of centroids of level $k$ triangles on it \\
of a point of $Q$ & \S\ref{Tilings} max. $k$ for which it is a vertex of a triangle of level $k$\\
of a triangle edge & \S\ref{Tilings} $k$ for which it is an edge of a level $k$ triangle\\
of an $a$-line & \S\ref{Tilings} max. $k$ for $k$-edges on this line\\
\hline
\end{tabular}
\end{center}
\label{levelTable}
\end{table}

There are two types of generic assumptions that we need to consider.

\begin{definition} A triangulation (or the value of ${\bf q}$ associated with it) in which every w-line
has a finite level is called {\bf generic}-$w$. This means that for every w-line
there is a finite bound
on the levels of the centroids (points of $P\backslash Q$) that lie on that line. In this case for any ball of
 any radius anywhere in the plane, there is a level beyond which no w-lines
 of higher level cut through that ball. See Fig.~\ref{iCw-L-tiling} for an example that shows
 failure of the generic-w condition. 
  
A triangulation (or the value of ${\bf q}$ associated with it) is said to be {\bf generic}-$a$ if every a-line has a finite level. This means
for every a-line there is a finite bound on the levels of edges that lie in that line. In this case for any ball
of any radius anywhere in the plane, there is a level beyond which no lines of the triangulation
of higher level cut through that ball. See Fig.~\ref{genericwNota} and Fig.~\ref{iCw-L-tiling}.

A tiling is said to be {\bf generic} if it is both generic-w and generic-a. All other tilings (or elements
${\bf q}\in Q$) are called {\bf singular}.
One case of the failure of generic-w is discussed in Prop.~\ref{noOrientation} above. 
The only way for one of our generic conditions to fail is that
there are a-lines or w-lines of infinite level. This situation is discussed in \S\ref{TheHull}. 
\end{definition}

Every element of $Q$ has a hexagon around it and three lines passing through it
in the directions $\pm a_1, \pm a_2, \pm(a_1+a_2)$. These lines 
pass through pairs of opposite edges of the hexagon at right-angles to those
edges. We shall call these lines {\bf short diameters}. These short diameters arise
out of the edges of the triangles of the triangulations that we have created. 
Each triangle edge is part of a line
which is a union of edges, all of the same level. As we have pointed out, the line (and its edges)
have level $k$ if they occur at level $k$ (and no higher). The original triangulation has level $0$. 
One should
note that a line may occur as part of the edges of many levels of triangles, but under
the assumption of generic-a there will be a highest level of triangles utilizing a given line, and it is
this highest level that gives the line its level and determines the corresponding edges.

In looking at the construction of level $1$ triangles out of the original triangulation
of level $0$ triangles, we note immediately that every point of $Q$ has at least one line
 of level $1$ through it (though by the time the triangularization is complete this line may 
 have risen to higher level), see Fig.~\ref{cosets}. The vertices of the level $1$ triangles have three lines
 of level $1$ through them, and the rest (the mid-points of the sides
 of the level $1$ triangles) have just one of level $1$ and the other two of level $0$. 
 Thus at this stage of the construction each hexagon has either one short diameter from a level $1$ line  
 or it has $3$ short diameters all of level $1$. 
 
 This is the point to remember: at each stage of determining the higher level triangles,
 we find that the hexagon around each element of $Q$ is of one of two kinds: it
 either has three short diameters of which two have equal level and the third a higher level,
 or three short diameters all of the same level $k$. The latter only occurs when the element of
 $Q$ is a vertex of a triangle of level $k$. 
 Since we are in the generic-a case, there is no element of $Q$
 which is a vertex of triangles of unbounded scales, and the second condition cannot hold
 indefinitely. Once an element of $Q$ is not a vertex at some level then it never becomes
 a vertex at any other higher level (all vertices of triangles at each level are formed from
 vertices of triangles at the previous level). 
 
 We conclude ultimately that in the generic-a cases every hexagon has three short
 diameters of which two are of one level and one of a higher level. See Fig.~\ref{triangulation}.
 
 \begin{lemma}  \label{generic-a and three short diameters}
 For ${\bf q}$ satisfying generic-a 
 each hexagonal tile of $\CalT({\bf q})$ has three short diameters of which exactly 
 one has the largest level and the other two equal but lesser levels.
\end{lemma}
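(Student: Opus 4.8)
The plan is to prove Lemma~\ref{generic-a and three short diameters} by tracking, for a fixed hexagon center $x \in Q$, how the levels of its three short diameters evolve as the construction of \S\ref{construction} proceeds through successive scales $2^0, 2^1, 2^2, \dots$. The key structural fact, already argued in the text preceding the lemma, is a dichotomy that holds at every stage $k$ of the construction: the hexagon around $x$ either has all three short diameters of the same (current) level, which happens precisely when $x$ is a vertex of a level-$k$ triangle, or it has exactly two short diameters of one level and the third of a strictly higher level. I would take this dichotomy as the engine of the proof and reduce the lemma to a statement about when the ``all three equal'' branch can persist.

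First I would formalize the dichotomy as an invariant maintained inductively. At level $0$ (the original unit triangulation) every point of $Q$ is a vertex, so all three short diameters have level $0$; this establishes the base case of the first branch. For the inductive step I would invoke the combinatorics of coset selection from \S\ref{construction}: when we pass from scale $2^k$ to scale $2^{k+1}$ by choosing a coset $q_{k+1} + 2^{k+1}Q$, the point $x$ either remains a vertex of a level-$(k+1)$ triangle or becomes a midpoint of a level-$(k+1)$ edge. In the former case all three diameters are promoted together to level $k+1$; in the latter, exactly one of the three lines through $x$ is upgraded to a level-$(k+1)$ edge (the one lying along the side of the level-$(k+1)$ triangle whose midpoint is $x$), while the other two lines retain their current levels. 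This is exactly the branching described for the level-$0$-to-level-$1$ step, and I would argue it replicates verbatim at every scale because the local picture around a vertex-versus-midpoint is self-similar under the doubling.

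Next I would use the generic-a hypothesis to force termination of the ``vertex'' branch. By definition of generic-a, every $a$-line through $x$ has finite level, so in particular there is a finite bound on the levels of edges through $x$, hence $x$ cannot be a vertex of triangles of arbitrarily large level. Combined with the monotonicity fact stated in the text (once an element of $Q$ fails to be a vertex at some level it is never a vertex at any higher level, since vertices propagate only from vertices), there is a well-defined largest level $m$ at which $x$ is a vertex. For all $k > m$ the hexagon is in the second branch, so from level $m+1$ onward one short diameter is being successively promoted while the other two are frozen. Taking the limit of the construction, the two frozen diameters settle at a common level (their level at stage $m+1$), and the third diameter carries the strictly larger level given by the highest-level $a$-line through $x$; finiteness of that level is again guaranteed by generic-a.

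The main obstacle I anticipate is making rigorous the claim that in the midpoint branch \emph{exactly one} diameter is promoted and the other two are genuinely frozen at a common value, rather than merely bounded. The subtlety is that a single $a$-line through $x$ could in principle be promoted at more than one later stage, so I must check that the two ``non-promoted'' directions at stage $m+1$ never get upgraded again; this is where I would lean on the observation that once $x$ ceases to be a vertex, the two short diameters not aligned with the longest line lie strictly interior to larger triangles at every subsequent stage (as illustrated in Fig.~\ref{triangulation}), so their level is fixed permanently. Verifying that these two truly share a single common level $m+1$ (and not two distinct levels) requires checking the local geometry at a midpoint of a level-$(m+1)$ edge, namely that the two transverse short diameters are both of level exactly $m+1$ there; I would handle this by a direct inspection of the three line-directions $\pm a_1, \pm a_2, \pm(a_1+a_2)$ meeting at such a midpoint. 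Once that local computation is pinned down, the global statement follows from the inductive dichotomy and the generic-a finiteness.
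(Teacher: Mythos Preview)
Your proposal is correct and follows essentially the same route as the paper: the argument in the text immediately preceding the lemma is precisely the inductive dichotomy you describe (all three diameters equal when $x$ is still a vertex, versus two equal and one higher once $x$ becomes a midpoint), terminated by generic-a. One small slip: in your final paragraph the common level of the two frozen diameters is $m$, not $m+1$, since at stage $m$ all three were level $m$ and only one gets promoted at stage $m+1$.
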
  

We now describe edge shifting. Fix any $\epsilon$ with $0< \epsilon \le 1/4$. This $\epsilon$ is going to be the distance
 by which lines are shifted. It is fixed throughout, but it exact value plays no role in 
 the discussion. Take a tiling based on ${\bf q}$. 
 
 Now consider any edge that has level $k< \infty$ but does not occur as part of an edge of higher level. This edge occurs as an 
 edge \emph{inside} some triangle $T$ of level $k+1$, 
and this allows us to distinguish two sides of that edge. The side of the edge on which the centroid of $T$
 lies is called the {\bf inner} side of the edge, and the other side its {\bf outer} side.
This edge (but not the entire line) is shifted inwards (i.e. towards the centroid of $T$) by 
 the distance $\epsilon$. Note that the shifting distance $\epsilon$ is independent of $k$. 
 This shifted edge then becomes the {\em black} stripe on the 
hexagonal tiles through which this edge cuts, see Fig.~\ref{basic-hexagon}. 
 Fig.~\ref{triangulationWithShifts} shows how edge shifting works.
At the end of shifting, each hexagon has on it a pattern made by the shifted triangle edges that looks like the one
shown in Fig.~\ref{basic-hexagon}.

\bigskip
\begin{figure}
\centering
\includegraphics[width=8cm]{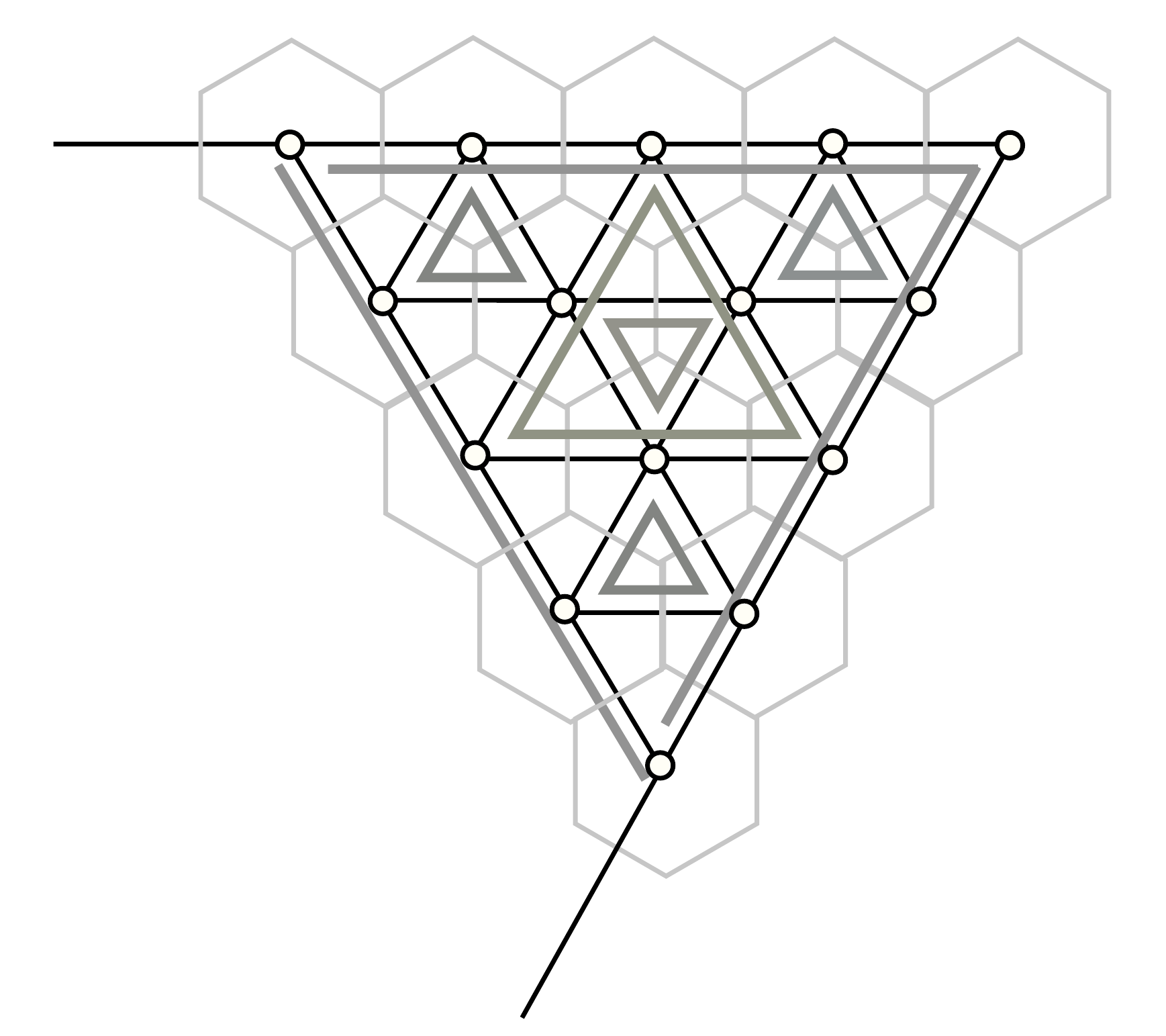}
\caption{The figure shows how edge shifting is done. Part of a triangulation 
is shown in thin black lines. The shifted edges are shown in thicker gray
lines. The extended black lines indicate that the largest (level $2$) triangle sits
in the top right corner of a level $3$ triangle which is not shown in full. Note
how the edges of the level $2$ triangle shift.}
\label{triangulationWithShifts}
\end{figure}

\begin{figure}
\centering
\includegraphics[width=3.5cm]{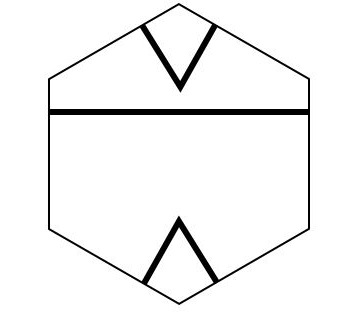}
\caption{The basic hexagon with its markings arising from shrunken
triangles.}
\label{basic-hexagon}
\end{figure}

In the case that ${\bf q}$ satisfies generic-a, the edges of every line of the triangulation are of bounded length.
Thus every edge undergoes a shift by the prescription above. Thus, 

\begin{prop} If $\CalT({\bf q})$ satisfies
the condition generic-a then there is a uniquely determined edge shifting on it. \qed 
\end{prop}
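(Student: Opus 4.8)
The plan is to establish two things: first, that under generic-a every edge of $\CalT({\bf q})$ actually receives a shift (existence), and second, that this shift is forced by the triangulation alone, with no residual freedom (uniqueness). Both reduce to a single geometric observation about how the level-$(k+1)$ triangulation refines the level-$k$ one.

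For existence I would invoke the generic-a hypothesis directly. By definition every $a$-line has finite level, so every triangle edge lies on a line carrying a largest edge-length $2^k$ for some finite $k$. Consequently each edge is a sub-edge of a \emph{maximal} edge of some finite level $k$, that is, an edge of length $2^k$ that is not part of any edge of length $2^{k+1}$. Since the shifting prescription is applied precisely to these maximal edges, and the maximal edges of levels $\le k$ partition each $a$-line, every edge is accounted for and none is left unshifted.

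For uniqueness the key step is to identify, for a maximal edge $e$ of level $k$, the level-$(k+1)$ triangle $T$ inside which it lies, and to show $T$ is unique. Here I would use the subdivision structure from \S\ref{construction}: each triangle of level $k+1$ is partitioned into four triangles of level $k$, and its three \emph{interior} edges (the midlines of this subdivision, i.e.\ the edges of the central sub-triangle) are exactly the level-$k$ edges that do not persist as part of a level-$(k+1)$ edge. Since $e$ is maximal at level $k$, it cannot be a boundary edge of a level-$(k+1)$ triangle, for then it would extend to length $2^{k+1}$; hence $e$ is one of these interior midlines and lies strictly in the interior of some level-$(k+1)$ triangle $T$. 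Because the level-$(k+1)$ triangulation is determined by the single coset $q_1 + \cdots + q_{k+1} + 2^{k+1}Q$ and distinct triangles of the same level have disjoint interiors, this $T$ is unique. The centroid of $T$ then determines the inner side of $e$ unambiguously, and the shift of $e$ by $\epsilon$ toward that centroid is forced.

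Combining these, every edge receives exactly one shift, determined solely by $\CalT({\bf q})$, so the edge shifting exists and is unique. The only genuine obstacle is the uniqueness of the enclosing triangle $T$, and this is precisely where generic-a is used essentially: without a finite level there is no maximal edge to anchor the shift, so the inner/outer dichotomy—and with it the direction of the shift—would never stabilize.
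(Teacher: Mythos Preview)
Your argument is correct and follows the same route as the paper: the paper's proof is the discussion immediately preceding the proposition (hence the \qed), which observes that under generic-a every edge has a finite maximal level $k$, that such an edge sits as an interior edge of a unique level-$(k+1)$ triangle $T$, and that the centroid of $T$ determines the shift direction. You have simply made explicit the point the paper leaves implicit, namely why $T$ is unique (via the disjoint-interiors observation and the midline description of maximal level-$k$ edges).
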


\section{Color}\label{color}

So far we have constructed a triangulation from our choice of ${\bf q}$, and shown
how edges can be shifted to produce the corresponding hexagonal tiling with the tiles suitably marked by black stripes. We wish now to show how the (long) diagonals of the hexagons are to be colored.
This amounts to producing a color (red, blue, or red-blue) for each of the long diagonals
of each hexagon of the tiling. The only requirement is that the overall coloring obey the
rule ${\bf R2}$ that is used to make Taylor--Socolar tilings. 

\medskip

As we mentioned above, coloring is made independently of shifting in the sense that the two processes can
be done in either order. In fact, in this argument
we shall suppose that the stripes have not been shifted, so they still run through the centroids of the tiles.
\medskip

We shall show that for $\bf{q} \in \overline Q$ in the generic case there is exactly one allowable coloring.
\smallskip

Assume that we have a generic tiling (this means both $a$ {\em and} $w$ generic). Now consider any hexagon of the tiling. We note from Lemma \ref{generic-a and three short diameters} 
that it has three short diameters, one of which is uniquely
of highest level, and it is this last short diameter which determines (after shifting) the black stripe for this hexagon. 
We will refer to this short diameter as the stripe, even though in this discussion it has not been shifted.  The
other two colored (long) diameters are a red one which lies at $\pi/6$ clockwise of the stripe and
a blue one which lies $\pi/6$ counterclockwise of the stripe. The red-blue diameter cuts the
stripe at right-angles, but which way around it is (red-blue or blue-red) is not determined yet.

\smallskip

Consider Fig.~\ref{coloring1Rev}
in which we see two complete level $1$ triangles overlaid on the basic level $0$ triangles.
Tiles of the hexagonal tiling are shown on points of $Q$ with the hexagons at the vertices
of the level $1$ triangles shown in green. These latter are points of 
$q_1 + 2Q$. At each point of $Q$ there are three edge lines running through it. But notice that at the midpoints of the sides of the level $1$ triangles (white hexagons), the edge belonging to the level $1$ triangle has higher level than the other two. This is the edge that will become the stripe for the hexagon at that
point. This stripe {\em forces} the red and blue diameters for this hexagon. 

\begin{figure}
\centering
\includegraphics[width=7cm]{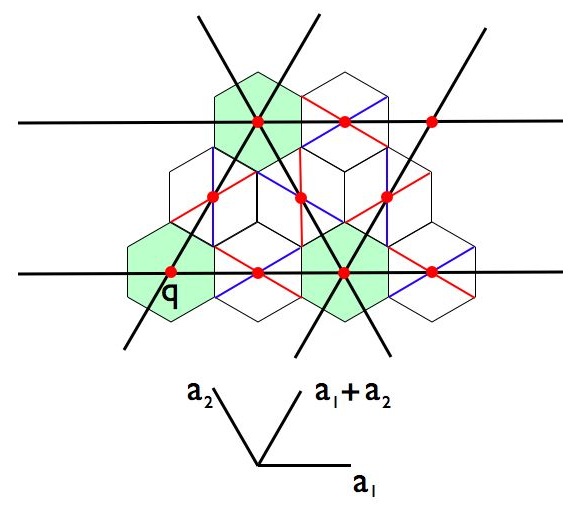}
\caption{The figure shows some hexagonal tiles, each centered on a point of $Q$. The point
$q$ is assumed to be in the coset $q_1+2Q$ and the gray hexagons are those in the
picture whose centers are in this coset.  The white hexagons are centered at points
from all three of the remaining cosets of $Q$ relative to $2Q$. These are the midpoints of the edges
of the level $1$ triangles. Notice in each, the 
red and blue diagonals clockwise and counterclockwise of the direction of the 
black stripes. At the bottom we see the three vectors 
$a_1, a_2, a_1+a_2$. The centers of the white hexagons are, reading left to right and
bottom to top, $q_1+a_1, q_1 +3a_1; q_1+a_1 +a_2,
q_1+2a_1 + a_2,  q_1+3a_1 + a_2;  q_1+ 3a_1 + 2a_2$. 
The picture manifests the rule {\bf R2} and shows that elements of the 
same coset carry the same orientation of diameters. Note that from the rotational symmetry
of the process and the fact that the hexagons centered on $q_1+a_1$ and $q_1+3a_1$ have
identically aligned diagonals, we can infer that this property is retained across each of the
cosets $q_1+ a_1+2Q, q_1+ a_2+2Q, q_1+a_1 +a_2 + 2Q$. }
\label{coloring1Rev}
\end{figure}

The idea behind coloring is based on extrapolating this argument to w-lines passing through midpoints of higher level triangles.  
Consider Fig.~\ref{coloring2}. The point $u$ is the midpoint
of an edge of a triangle $T'$ of level $3$. Drawing the $w$-line $L$ towards the centroid $d$ of the top left 
corner triangle $T$ of level $2$ we see first of all that the edge of the level $3$ triangle through $u$ is
the highest level edge through $u$ and hence the coloring along the $w$-line $L$ starts off red, as shown.
Now the rule R2 forces the next part of the coloring to be blue and we come to the hexagon center
$e$. This has three edges through it, but the one that our $w$-line crosses at right-angles has the highest
level, and so will produce the stripe for the corresponding hexagon. The color must switch at the stripe,
and so we see the next red segment as we come to $d$. 

And so it goes, until we reach the point $v$. Here $L$ meets the midpoint of the edge of another level $3$
triangle. This edge produces the stripe for the hexagon at $v$, but it is not at right-angles
to $L$, so there is no color change on $L$ at $v$. Since $v$ is the midpoint of this level $3$
triangle, the same argument that we used at $u$ shows that the coloring should start off blue,
as indeed we have seen it does. At this point one can see by the glide reflection symmetry
along $L$ that the entire line $L$ will ultimately be colored so as to fully respect the rule R2. 
For a full example where one can see the translational symmetry take over, the reader can fill in the coloring on the gray line through $y$.

One can see a similar $w$-line coloring of the $w$-line passing through $q$ and $c$. This time
the point $q$ is the midpoint of an edge of a level $4$ triangle and $c$ is the centroid of one of the 
level $3$ corner triangles of this level $4$ triangle. The pair $r,c$ produces another example, with this
time the first color out of $r$ being blue. 

Finally, we show part of a potential line coloring starting at $s$ towards $t$. We say `potential' because
from the figure we do not know how the level $5$ triangles lie. If $s$ is a midpoint of an edge of 
a level $5$ triangle, then the indicated $w$-line is colored as shown. If $s$ is not a midpoint then this $w$-line
is not yet colorable. 

\begin{figure}
\centering
\includegraphics[width=8cm]{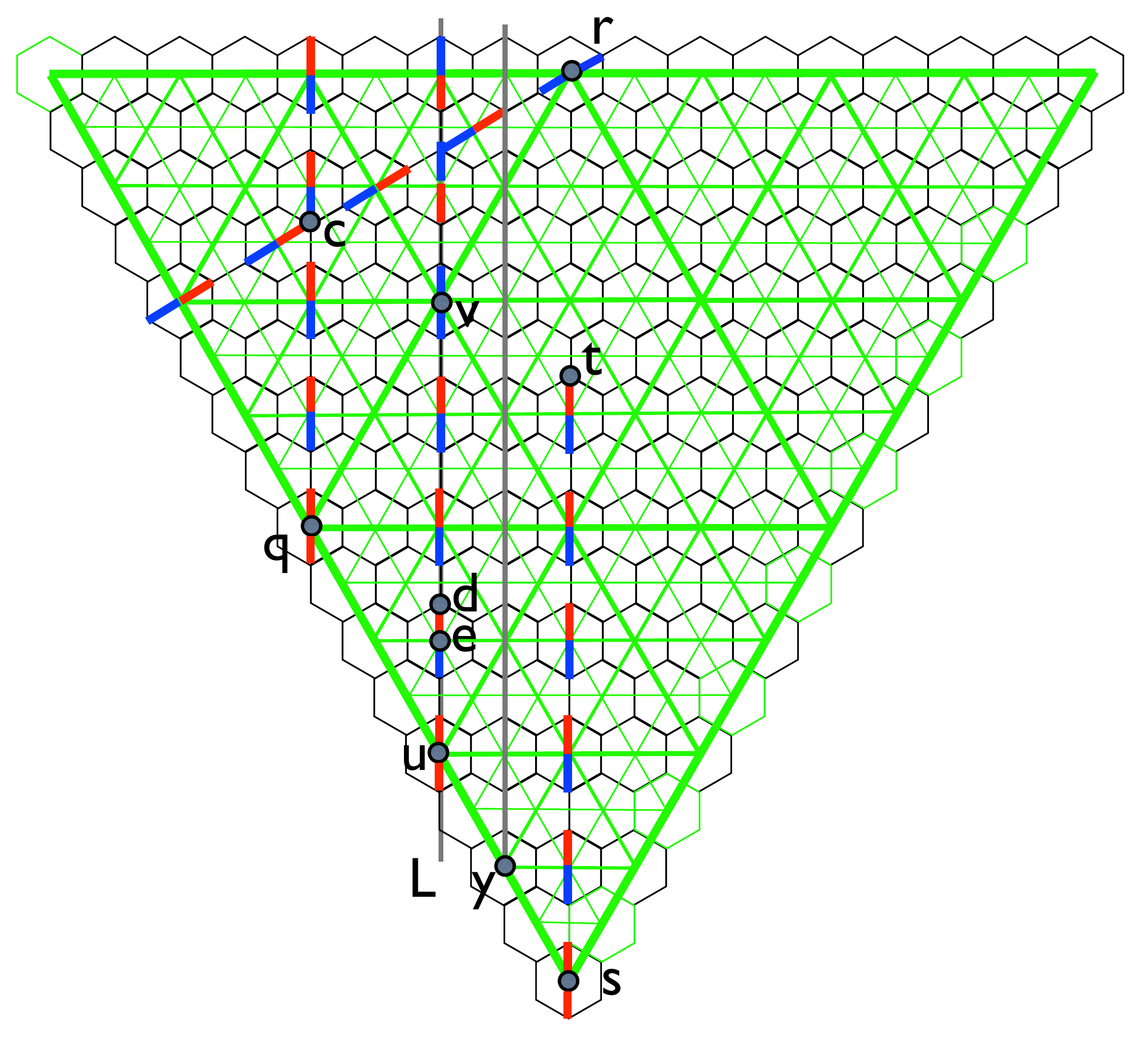}
\caption{Coloring of lines. Colors are forced
on $w$-lines as they pass through the midpoint of a triangle
directed towards the centroid of one of its corner triangles.}
\label{coloring2}
\end{figure}

\smallskip
We can thus continue in this way indefinitely. The important question is, does every tile get fully
colored in the process? 
Using condition generic-w, the answer is yes. To see this note that
each element of $p$ of $P\backslash Q$ has three coloring lines through it. It will suffice
to prove that the process described above will color these three coloring lines. 

Now assuming the condition
generic-w we know that $p$ has an orientation. This means that it is the centroid of some triangle $T$
of level $k$ in the triangulation, and it is not the centroid of any higher level triangle. The triangle
$T$ then sits as one of the corner triangles in a triangle $T'$ of level $k+1$. Up to orientation, the
situation is that shown in Fig.~\ref{coloringCompletion}. The colors of the two hexagons shown 
are then determined because the edges of $T'$ produces stripes on them. Thus the two corresponding coloring lines that pass
through $p$ are indeed colored. Thus the colorings of these two coloring lines through $p$,
the ones that pass through the mid-points of two sides of $T'$ are forced. 

What about the third line $l$ through $p$ (shown as the dotted line in Fig.~\ref{coloringCompletion})?
We wish to see this as a w-line through a midpoint of an edge, just as we saw the other two lines. 
We look at the centroid $p'$ of $T'$, since
the coloring line $l$, which is through $v$ and $p$, is the same as the line through $v$ and $p'$ and the centroid $p'$ is of higher level than $p$ and also has an orientation. We can repeat the process we 
just went through with $p$ with $p'$ instead, to get a new
triangle $T''$ of which $p'$ is the centroid, and a triangle $T'''$ in which $T''$ sits as one of its
corners ($p'$ is the centroid of $T'$ but it may be the centroid of higher level
triangles as well). 

If this still fails to pick up the line $l$ then
 it must be that $l$ still passes through a vertex of $T'''$ (as opposed to through the midpoint of one of its edges)
 and the line $l$ passes through the centroid $p'''$ of $T'''$. However, $p'''$ is of higher level still than that
 of $p'$. The upshot of this is that if we never reach a forced coloring of $l$ (so that it remains forever
 uncolored in our coloring process) then we have on the line $l$ centroids of triangles of
 unbounded levels. This violates condition generic-w. Thus in the generic situation the coloring
 does reach every coloring line and the coloring is complete in the limit.
 
 This completes the argument that there is one and only one coloring for each generic triangularization.

\begin{figure}
\centering
\includegraphics[width=8cm]{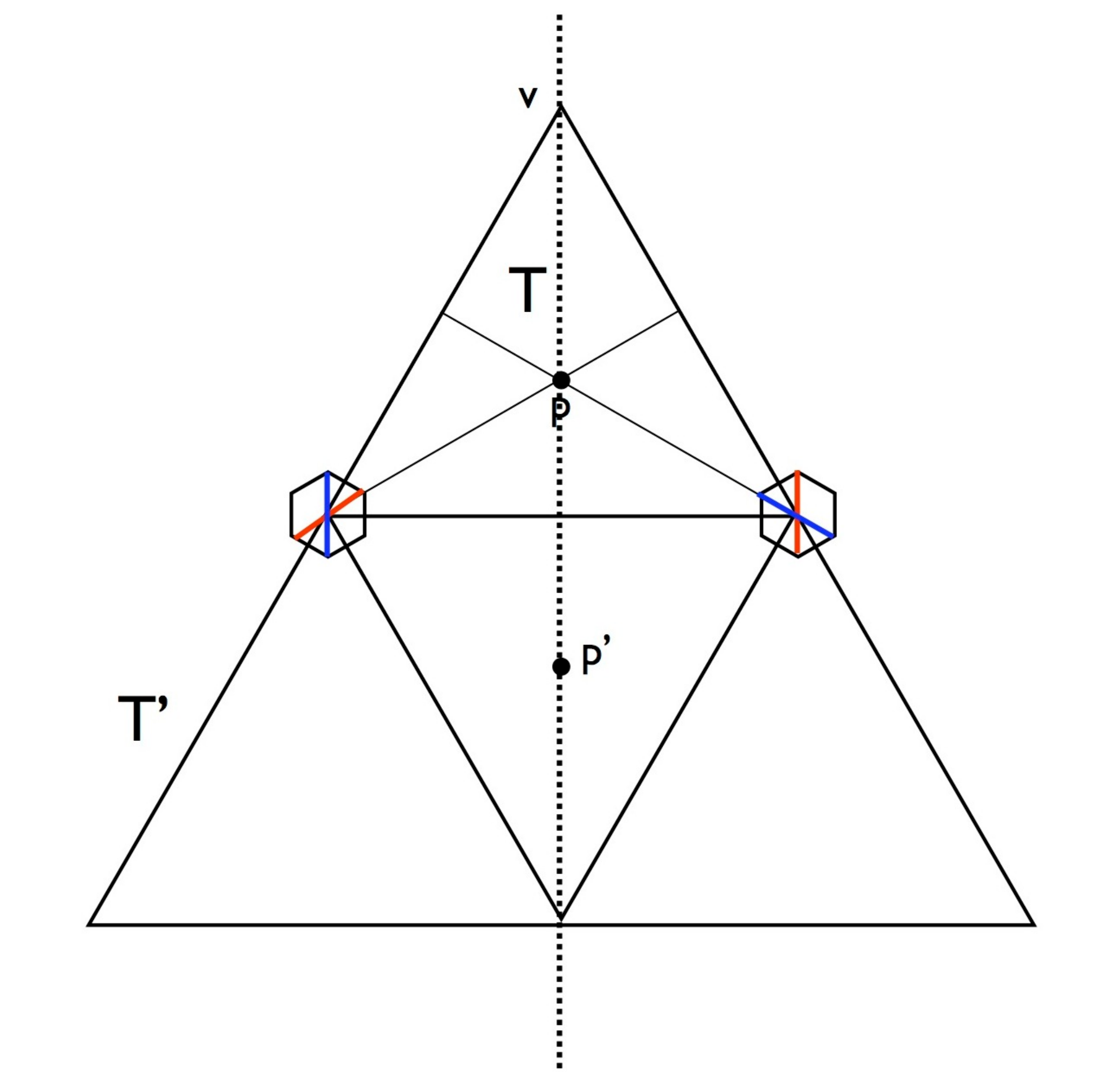}
\caption{The figure shows how the centroid of the triangle $T$, 
which is in the top corner of the main triangle, is on two coloring lines. The third
coloring line through $p$ is the dotted line through $v$. This passes through
the centroid $p'$ of $T'$.}
\label{coloringCompletion}
\end{figure}

\smallskip
If one is presented with a triangularization and wishes to put in the colors, then
one sees that the coloring becomes known in stages, looking at the triangles
(equivalently cosets) of ever increasing levels. Figure~\ref{ST-tiling-Figure3b} shows the amount
of color information that can be gleaned at level $k=2$.

\begin{figure}
\centering
\includegraphics[width=10cm]{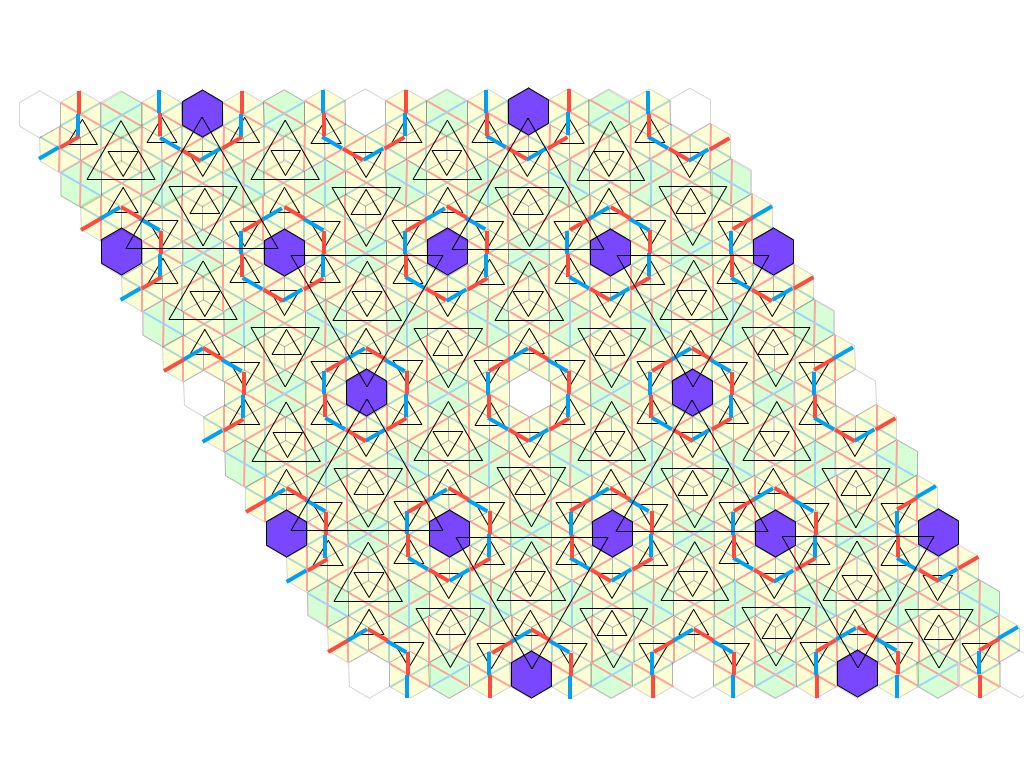}
\caption{This figure shows how the coloring appears if one determines the coloring
by the information in increasing coset levels. This figure corresponds to the process at $k=2$. 
The triangle vertices and their corresponding
hexagons are indicated at levels $0,1,2$ and the corresponding partial coloring is noted. }
\label{ST-tiling-Figure3b}
\end{figure}

\begin{prop} Any generic tiling is uniquely colorable. \qed
\end{prop}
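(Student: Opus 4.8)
The plan is to separate the statement into an existence claim (a coloring obeying \textbf{R2} can be produced) and a uniqueness claim (no two such colorings differ), and to reduce both to the behaviour of the coloring along individual $w$-lines. The starting point is that, by generic-$a$ together with Lemma~\ref{generic-a and three short diameters}, every hexagon carries a \emph{uniquely determined} stripe, namely its short diameter of strictly largest level. This stripe fixes the two ``pure'' long diameters by the convention that the red one sits at $\pi/6$ clockwise and the blue one at $\pi/6$ counterclockwise of the stripe, and it singles out the third long diameter---the one perpendicular to the stripe---as the color-split diameter whose orientation (red-blue versus blue-red) is the only datum still to be decided. Thus the whole coloring problem is equivalent to choosing, consistently with \textbf{R2}, the colors of the segments cut out along each $w$-line.

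For uniqueness I would argue that the color of an entire $w$-line is forced once it is known at a single point. Rule \textbf{R2} demands that the color switch exactly at those hexagon centers where the perpendicular short diameter is the stripe (the highest-level short diameter), and remain unchanged at the centers where the line does not meet the stripe at right angles; hence the color pattern along the line is determined by one initial value. That initial value is itself pinned down wherever the line emanates from the midpoint of a high-level triangle edge: there the stripe is the crossed edge, and the red-clockwise/blue-counterclockwise convention forces which color the line begins with. Consequently any coloring satisfying \textbf{R2} must coincide with the one produced by the propagation process, which gives uniqueness.

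For existence I would show that the propagation colors every long diameter, using generic-$w$. Fix $p\in P\setminus Q$; by generic-$w$ the point $p$ has an orientation, so it is the centroid of a triangle $T$ of some finite maximal level $k$, and $T$ sits as a corner of a level-$(k+1)$ triangle $T'$. Two of the three $w$-lines through $p$ pass through midpoints of edges of $T'$, whose hexagons have stripes forced by those edges, so these two lines get colored at $p$. For the remaining line $l$ I would pass to the centroid $p'$ of $T'$, which lies on $l$ and has strictly higher level, and repeat; if $l$ never acquired a color we would obtain centroids of unbounded level on $l$, contradicting generic-$w$. Hence all three coloring lines through every $p$ are eventually colored, and the coloring is complete in the limit. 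Combining this with the uniqueness argument yields the Proposition.

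The step I expect to be the main obstacle is not completeness but \emph{global consistency}: verifying that the segment-by-segment forcing never runs into a contradiction---neither a parity clash as a single line is colored, nor a disagreement where two differently-directed $w$-lines cross at a common vertex. My intended remedy is the glide-reflection/translation symmetry of the coloring along each $w$-line observed in the discussion of Fig.~\ref{coloring2}: once a line is started from a triangle-edge midpoint, this symmetry guarantees that \textbf{R2} holds at every subsequent crossing, so no internal contradiction arises; and because each line's colors are pinned independently by its own starting stripe via the fixed clockwise/counterclockwise convention, the colors assigned along the three lines through any vertex are automatically compatible. Making this compatibility fully rigorous---ensuring that the forced data on all three directions through every $p\in P\setminus Q$ patch together into one well-defined \textbf{R2}-coloring---is the crux that the generic-$w$ and generic-$a$ hypotheses are there to supply.
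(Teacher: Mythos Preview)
Your proposal is correct and follows essentially the same route as the paper: the stripe is fixed by generic-$a$ via Lemma~\ref{generic-a and three short diameters}, the pure red/blue diameters are placed by the clockwise/counterclockwise convention, colors are propagated along each $w$-line from a triangle-edge midpoint (with the glide-reflection symmetry of Fig.~\ref{coloring2} handling consistency along the line), and completeness at every $p\in P\setminus Q$ is obtained by the corner-triangle argument of Fig.~\ref{coloringCompletion}, iterating to higher-level centroids and invoking generic-$w$ to terminate. Your explicit separation into existence and uniqueness, and your flagging of the cross-direction consistency issue, make the logical structure a bit more transparent than the paper's narrative, but the underlying ideas are the same.
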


We note that in the generic situation, the shifting and coloring are determined locally. That is,
if one wishes to create the marked tiles for a finite patch of a generic tiling, one need only
examine the tiling in a finite neighbourhood of that patch. That is because the shifting and
coloring depend only on knowing levels of lines, and what the levels of various points
on them are. Because of the generic conditions, these levels are all bounded in any finite 
patch and one needs only to look a finite distance out from the patch in order to pick
up all the appropriate centroids and triangle edges to decide on the coloring and shifting within
the patch.  Of course the radii of the patches are not uniformly bounded across the entire tiling.

Here we offer a different proof of a result that appears in \cite{ST}:

\begin{prop} \label{threeColorThm}
In any generic tiling and at any point $p$ which is a hexagon vertex, the colors of the three
concurrent diagonals of the three hexagons that surround $p$ are not all the same {\rm (}where they meet at $p${\rm )}. 
\end{prop}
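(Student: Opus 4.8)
The plan is to reduce the statement to a purely local computation at $p$ and then to an impossible cyclic inequality among levels. The three diagonals meeting at $p$ are the half-diagonals of the three hexagons $H_1,H_2,H_3$ surrounding $p$, whose centres $c_1,c_2,c_3\in Q$ are the vertices of the unit triangle with centroid $p$; each half-diagonal lies on one of the three $w$-lines through $p$ and points from $c_i$ toward $p$, i.e.\ along the median of that unit triangle, hence perpendicular to the edge opposite $c_i$. By Lemma~\ref{generic-a and three short diameters} each $H_i$ carries a unique stripe, namely its short diameter of largest level, in one of the directions $a_1,a_2,a_1+a_2$; relative to that stripe the long diagonals are coloured solid red at $-\pi/6$, solid blue at $+\pi/6$, and red--blue (split) perpendicular to the stripe, as recorded in \S\ref{color}. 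Thus the colour at $p$ of the half-diagonal of $H_i$ is read directly off the stripe direction $s_i$: it is solid, red or blue by the $\mp\pi/6$ rule, \emph{unless} $s_i$ is parallel to the edge of the unit triangle opposite $c_i$, in which case the diagonal toward $p$ is the split one.

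The heart of the argument is the solid case. First I would suppose all three half-diagonals are the same solid colour, say red, and compute the forced stripe directions from the $\mp\pi/6$ rule. One finds that $H_1$ must be striped along the edge $c_1c_3$, $H_2$ along $c_1c_2$, and $H_3$ along $c_2c_3$; that is, each stripe is an edge of the unit triangle, and the three edges chain cyclically around it. Since each stripe is by definition the $a$-line of \emph{largest} level through the corresponding centre, comparing levels across the shared edges gives $\lambda(c_1c_3)>\lambda(c_1c_2)>\lambda(c_2c_3)>\lambda(c_1c_3)$, where $\lambda$ denotes the level of the $a$-line. This is a contradiction, and the all-blue case gives the reversed cycle, equally impossible. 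So no vertex can be monochromatic with all three diagonals solid.

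The remaining, and genuinely harder, case is when one or more of the $s_i$ is parallel to the opposite edge, so that the corresponding diagonal toward $p$ is split. Then that stripe is one of the ``outward'' $a$-lines rather than a unit-triangle edge, the cyclic chain of inequalities is broken, and monochromaticity is no longer excluded by levels alone. The colour at $p$ of a split half-diagonal is not intrinsic to $H_i$'s stripe; it is fixed by the coloring construction of \S\ref{color}. To pin it down I would travel along the $w$-line $l^{(i)}$ carrying that split diagonal: by the generic-$w$ hypothesis the centroid levels along $l^{(i)}$ are bounded, so the colour on $l^{(i)}$ is locally constant and switches exactly at the centres whose split lies along $l^{(i)}$; anchoring at the nearest solid segment and counting the intervening switches then determines the $p$-end colour.

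The hard part will be exactly this bookkeeping for the split diagonals: I must show that the colour forced along $l^{(i)}$, together with the level inequalities imposed by the split conditions at $c_1,c_2,c_3$, still prevents all three $p$-ends from agreeing, i.e.\ that the cyclic obstruction survives passage through the split segments. I expect the solid case (the level cycle) to be immediate, and the control of the split-diagonal orientations via the coloring process to be the main obstacle; a clean way to organize it may be to relate $l^{(i)}$ to the centroid of the next-level triangle containing $p$ and induct on the level of $p$.
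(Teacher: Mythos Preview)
Your solid-case argument via the cyclic level inequality is correct and elegant: forcing all three half-diagonals to be the same solid colour does chain the stripes around the unit triangle and produces the impossible cycle $\lambda(c_1c_3)>\lambda(c_1c_2)>\lambda(c_2c_3)>\lambda(c_1c_3)$. But you yourself identify the split case as the hard part, and you do not close it. The bookkeeping you sketch --- travelling along each $l^{(i)}$, anchoring at the nearest solid segment, counting switches --- is not carried out, and there is no reason to expect the cyclic obstruction to ``survive passage through the split segments'' without a genuinely new idea. As it stands the proof is incomplete.

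The paper sidesteps the solid/split dichotomy entirely. Its observation is that a hexagon vertex $p\in P\setminus Q$ is, by generic-$w$, the centroid of some triangle $T$ of finite level $k$, and $T$ sits as a corner triangle inside a triangle $T'$ of level $k+1$. Two of the three $w$-lines through $p$ are then exactly the two medians of $T$ that hit the midpoints of two edges of $T'$; the coloring construction of \S\ref{color} (Fig.~\ref{coloringCompletion}) forces these two lines to start with \emph{opposite} colours at those midpoints and to arrive at $p$ still with opposite colours. So two of the three diagonals at $p$ already disagree, and the proposition follows in one stroke. This argument never looks at the hexagons $H_i$ or their stripes at all; it works directly with the $w$-lines and the global coloring rule, which is why it handles the split case automatically. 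Your level-cycle idea is a nice local obstruction, but to finish your route you would effectively have to re-derive this structural fact about the two forced medians anyway.
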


\noindent
{\bf Proof}: The point $p$ is the centroid of come corner triangle of one of the triangles
of the triangulation. Fig.~\ref{coloringCompletion} shows how the coloring is forced along two of the 
medians of the corner triangle and that they force opposite colorings at $p$. See also
Fig.~\ref{CHTVertical2}. \qed

\subsection{Completeness}\
We have now shown how to work from a triangularization to a tiling satisfying the matching
rules {\bf R1,R2}. Does this procedure produce all possible tilings satisfying these rules? The answer
is yes, and this is already implicit in \cite{ST}. We refer the reader to the paper for details, but
the point is that in creating a tiling following the rules a triangle pattern emerges from the
stripes of the hexagons. This triangularization can be viewed as the edge-shifting of a triangulation $\CalT$ conforming to our edge shifting rule. 
Thus we know that working with all triangulations,
as we do, we are bound to be able to produce the same shrunken triangle pattern as appears in $T$.

In the generic cases, the coloring that we impose on this triangulation is precisely that forced by {\bf R2}. 
When we discuss the non-generic cases in \S\ref{Non-genericCases}, we shall see that for non-generic
triangulations there are actually choices for the colorings of some lines,
but these choices exhaust the possibilities allowed by the rule {\bf R2}. Thus the
tiling $T$ must be among those that we construct from $\CalT$ and so we see that our procedure
does create all possible tilings conforming to the matching rules. 

When we determine the structure of the hull in \S\ref{HullStructure} we shall also see that 
it is comprised of a minimal hull and two highly exceptional countable families of tilings. 
The former contains triangulations of all types and, as the terminology indicates,  the orbit closure of any one of its tilings
contains all the others in the minimal hull, so in a sense if you have one then you have them all. 
(The two exceptional families of tilings appear in the rule based development of the tilings, but
do not appear in the inflation rule description.)

%%%%%%%%%%%%%%%%%%%%%
\section{The Hull}\label{TheHull}
%%%%%%%%%%%%%%%%%%%%%

%%%%%%%%%%%%
\subsection{Introducing the hull}\
%%%%%%%%%%%%
Let $X_Q$ denote the set of all Taylor--Socolar hexagonal tilings whose hexagons are centered
on the points of $Q$ and whose vertices are the points of $P\backslash Q$. The group $Q$ (with the
discrete topology) acts on
$X_Q$ by translations. We let 
$X$ be the set of all translations by $\RR^2$ of the elements of $X_Q$.  We call $X_Q$ and $X$ the
{\bf hulls} of the Taylor--Socolar tiling system.
We give $X_Q$ and $X$ the usual local
topologies - two tilings are close if they agree on a large ball around the origin allowing small shifts. 
In the case of $X_Q$ one can do away with `the small shifts' part. See \cite{LMS1} for the topology. 

In fact it is easy to see that, although we have produced it out of $X_Q$, $X$ is just the standard 
hull that one would expect
from the set of all Taylor--Socolar tilings when they have not been anchored onto the points of $Q$.
Thus $X$ is compact, and since $X_Q$ is a closed subset of it, it too is compact. 

The translation actions of $Q$ on $X_Q$ and $\RR^2$ on $X$ are continuous.
We note that the hulls $X_Q$ and $X$ are invariant under six-fold rotation and under
complete interchange of the two tile types.
Our task is to provide some understanding of $X_Q$ and $X$. Here we shall stick primarily to
$X_Q$ since the corresponding results for $X$ are easily inferred. We let $X_Q^{gen}$ denote the set of all the generic tilings in $X_Q$.

Each element $\Lambda \in X_Q$ produces a triangularization of the plane, and the triangularizations
are parameterized precisely by elements in $\overline Q$. In particular there is an element
${\bf q}(\Lambda) \in \overline{Q}$ corresponding to $\Lambda$, and we have a surjective mapping
\begin{eqnarray} \label{surjective map from X_Q to overlineQ}
\xi: X_Q &\longrightarrow& \overline Q\\
 \Lambda &\mapsto& {\bf q}(\Lambda) \nonumber \,.
\end{eqnarray}

\begin{prop} The mapping $\xi$ is continuous {\rm (}with respect to the local topology on $X_Q$ and
the $Q$-adic topology on $\overline Q${\rm )}. Furthermore $\xi$ is $1-1$ on $X_Q^{gen}$. 
\end{prop}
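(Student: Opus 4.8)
The plan is to prove continuity and injectivity separately, in each case leveraging the local structure of the construction in \S\ref{construction}–\S\ref{Tilings}.

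For continuity, I would argue directly from the definitions of the two topologies. The $Q$-adic topology on $\overline Q$ is generated by the cosets $q + 2^k \overline Q$; to show $\xi$ is continuous it suffices to show that for each $\Lambda \in X_Q$ and each $k$, there is a neighbourhood $U$ of $\Lambda$ in the local topology such that $\xi(U) \subseteq \xi(\Lambda) + 2^k\overline Q$. The key observation is that the initial segment $(q_1, q_1+q_2, \dots, q_1+\cdots+q_k)$ of ${\bf q}(\Lambda)$ is determined by the coset $q_1 + \cdots + q_k + 2^k Q$ of the level-$k$ triangulation, and this coset is read off from the triangle pattern $\CalT({\bf q})$ restricted to any sufficiently large ball about the origin — specifically, one need only see enough triangles of level $\le k$ to locate which coset of $2^k Q$ carries the vertices of the side-length-$2^k$ triangles. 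Since the black stripes of the tiles encode the (shifted) triangle edges, two tilings that agree on a large enough ball about the origin produce the same triangle pattern out to that level, hence the same first $k$ components of ${\bf q}$. Choosing $U$ to be the set of tilings agreeing with $\Lambda$ on a ball large enough to pin down all level-$\le k$ triangles meeting a fixed neighbourhood of $0$ then gives $\xi(U) \subseteq \xi(\Lambda) + 2^k \overline Q$, which is continuity.

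For injectivity on $X_Q^{gen}$, the strategy is to show that a generic value ${\bf q}$ determines its tiling completely, i.e.\ the fibre $\xi^{-1}({\bf q})$ is a singleton. This is essentially a repackaging of results already established: by the Proposition preceding \S\ref{color} (edge shifting is uniquely determined under generic-$a$) and the Proposition that any generic tiling is uniquely colorable, the triangulation $\CalT({\bf q})$ together with the generic conditions forces both the black stripes (via edge shifting) and the diagonal colors (via the forced coloring along $w$-lines). Thus if $\Lambda, \Lambda' \in X_Q^{gen}$ have $\xi(\Lambda) = \xi(\Lambda') = {\bf q}$, they share the same triangulation, hence the same edge shifting, hence the same stripes, and hence — by uniqueness of the coloring — the same diagonal colors on every hexagon. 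Since a Taylor--Socolar tiling anchored on $Q$ is determined by the marking (stripe plus colored diagonals) on each hexagon, we conclude $\Lambda = \Lambda'$.

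The main obstacle I anticipate is the continuity step, specifically making precise the claim that the first $k$ components of ${\bf q}(\Lambda)$ are determined by a \emph{bounded} ball, uniformly enough to extract an open neighbourhood. One must be careful that a generic tiling has triangles of all levels, so in a fixed ball one sees only finitely many triangles and must argue that the coset $q_1 + \cdots + q_k + 2^k Q$ is nonetheless recoverable from the finitely many low-level triangles visible. The cleanest route is to fix the hexagon at $0 \in Q$ and track the nested cosets directly: the level-$1$ coset $q_1 + 2Q$ is determined by which neighbouring hexagons are triangle vertices, the level-$2$ coset by the level-$1$ and level-$2$ triangle data, and so on, each step requiring only finitely many tiles. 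Establishing that this finite data is genuinely locally visible (and invariant under the small shifts permitted in $X$, though in $X_Q$ no shifts are needed) is where the care lies, but it follows from the fact that the stripe pattern faithfully encodes the triangulation out to any fixed level on any fixed patch.
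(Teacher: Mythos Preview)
Your proposal is correct and follows essentially the same approach as the paper: continuity via the observation that a sufficiently large patch determines the triangulation up to level $k$ and hence the coset of ${\bf q}$ modulo $2^k\overline Q$, and injectivity on $X_Q^{gen}$ via the already-established uniqueness of edge shifting (generic-$a$) and coloring (generic-$w$). Your version is more explicit about which earlier propositions are being invoked and more careful about the potential subtlety in the continuity step, but the underlying argument is the same.
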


\noindent
{\bf Proof:} Any ${\bf q} \in \overline Q$ is determined by its congruence classes
modulo $2\overline Q, 4\overline Q, \dots$, which are represented equally well
by the congruence classes of $Q$ modulo $2Q, 4Q, \dots$. These congruence classes
are the sets of vertices
of the triangles of increasing sizes, starting with those of level $1$. Now any patch of tiles
containing a ball $B_R$, $R>2$, will determine part of the triangulation with triangles of all
levels $1,2, \dots n$ for some $n = n(R)$, and we have $n(R) \to \infty $ as
$R\to \infty$. The larger the patch the more congruence classes we know, and
this is the continuity statement. 

In the case of a generic tiling $\Lambda$,  ${\bf q}(\Lambda)$ already determines the entire
markings of the tiles and hence determines $\Lambda$. Thus $\xi$ is $1-1$ on $X_Q^{gen}$.\qed

Below we shall see that with respect to the Haar measure on $\overline Q$ the set of singular (i.e.
non-generic) ${\bf q}$ is of measure $0$. A consequence of this
is \cite{BLM}, Thm. 6:

\begin{coro} \label{ms1}
$X_Q$ is uniquely ergodic and the elements of $X_Q^{gen}$ are regular model sets. \qed
\end{coro}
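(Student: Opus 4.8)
The plan is to obtain this corollary as a direct application of the characterization theorem \cite{BLM}, Thm.~6, which produces unique ergodicity and the regular model set property for a dynamical system admitting a factor map onto a compact abelian group that is one-to-one over a subset of full Haar measure. Here the compact abelian group is $\overline Q$, carrying the $Q$-action by the dense embedding $i$, and the factor map is $\xi$. The measure-theoretic input that is needed, namely that the singular ${\bf q}$ form a set of measure $0$ in $\overline Q$, is the statement established in the material below, which I take as given.

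First I would confirm that $(X_Q, Q)$ together with $\xi$ satisfies the structural hypotheses of \cite{BLM}. Continuity and surjectivity of $\xi$ are recorded in the preceding Proposition, and $(\overline Q, Q)$ is a minimal equicontinuous rotation because $Q$ is dense in its completion $\overline Q$. The remaining point is $Q$-equivariance: translating a tiling $\Lambda$ by a lattice vector $t \in Q$ translates its associated triangulation $\CalT({\bf q})$ by $t$, which shifts, at every level $k$, the defining coset ${\bf q} \bmod 2^k Q$ by $t$, and hence sends $\xi(\Lambda) = {\bf q}(\Lambda)$ to ${\bf q}(\Lambda) + i(t)$. Thus $\xi$ is a factor map onto the minimal equicontinuous system $(\overline Q, Q)$.

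Next I would establish almost-everywhere injectivity of $\xi$. By the preceding Proposition $\xi$ is injective on $X_Q^{gen}$; conversely, over a generic value of ${\bf q}$ the tiling is uniquely reconstructed, since the edge shifting is uniquely determined (\S\ref{Tilings}) and the coloring is the unique one compatible with {\bf R2} (\S\ref{color}). Combining these two independently proved uniqueness statements shows that the entire marked tiling over a generic ${\bf q}$ is unique, so every fiber $\xi^{-1}({\bf q})$ with ${\bf q}$ generic is a single point of $X_Q^{gen}$. Granting that the singular ${\bf q}$ are a null set, the generic points form a set of full Haar measure, and $\xi$ restricts to a bijection over it.

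With these hypotheses verified, \cite{BLM}, Thm.~6 yields that $(X_Q, Q)$ is uniquely ergodic, its unique invariant probability measure being the pullback of Haar measure on $\overline Q$; the two exceptional countable $Q$-orbits lie over the null set of singular points and, being infinite free orbits, carry no invariant probability measure, so they do not disturb unique ergodicity. The same theorem identifies the elements of $X_Q^{gen}$, namely the points where $\xi$ is one-to-one, as regular model sets for the cut and project scheme with internal space $\overline Q$. The main obstacle is the fiber-uniqueness step: one must be sure that the separately established uniqueness of edge shifting and of coloring really combine to pin down the whole tiling over a generic parameter, for it is exactly this that upgrades injectivity on $X_Q^{gen}$ to injectivity over a full-measure subset of $\overline Q$, which is precisely what the cited theorem consumes.
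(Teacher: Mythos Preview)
Your proposal is correct and follows essentially the same route as the paper: invoke \cite{BLM}, Thm.~6 with $\xi: X_Q \to \overline Q$ as the factor map, using continuity and surjectivity from the preceding Proposition and the fact (Lemma~\ref{singularMeasure0}) that the singular parameters form a null set. The paper gives only this one-line citation; your expansion---making the $Q$-equivariance of $\xi$ explicit and noting that fiber-uniqueness over generic $\mathbf q$ (not merely injectivity on $X_Q^{gen}$) is what is actually required---is a welcome clarification but not a different argument.
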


We shall make the model sets rather explicit in \S\ref{model sets}. 

\subsection{Exceptional cases}\ \label{Non-genericCases}
We now consider what happens in the case of non-generic tilings. To be non-generic a tiling
must violate either generic-a or generic-w. We consider these two situations in turn.

\subsubsection{Violation of generic-a} In the case of violation of generic-a, there is an a-line of infinite level (that is, it does not
have a level as we have defined it). Let $\overline{\ZZ_2}$ be the $Q$-adic completion of $\ZZ$.

\begin{prop} \label{exceptional-a}
A tiling $\Lambda$, where $\xi(\Lambda) = {\bf q}$, has an a-line of infinite level if and only if 
\, ${\bf q} \in x + \overline{\ZZ_2} a$
for some $a \in \{a_1, a_2, a_1 +a_2\}$ and some $x \in Q$. 
Furthermore when this happens the points of $Q$ lying on the infinite-level-line are those of the set
 $x +{\ZZ} a$.
\end{prop}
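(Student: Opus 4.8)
The plan is to translate the geometric condition ``$\Lambda$ has an $a$-line of infinite level'' into a purely $2$-adic condition on the coordinates of ${\bf q}$. First I would record that the $Q$-adic metric is the product of the two $2$-adic metrics relative to the basis $a_1, a_2$: since $2^kQ = 2^k\ZZ a_1 \oplus 2^k\ZZ a_2$, one has $d(x,y)\le 2^{-k}$ iff both coordinates of $x-y$ are divisible by $2^k$, so the completion splits as $\overline Q = \overline{\ZZ_2}\,a_1 \oplus \overline{\ZZ_2}\,a_2$. Write ${\bf q} = \alpha a_1 + \beta a_2$ with $\alpha,\beta \in \overline{\ZZ_2}$, and write the $k$-th approximant as $q_1 + \cdots + q_k = \alpha_k a_1 + \beta_k a_2 \in Q$ with $\alpha_k,\beta_k \in \ZZ$. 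By compatibility of the defining sequence, $\alpha_k \equiv \alpha$ and $\beta_k \equiv \beta \pmod{2^k}$ in $\overline{\ZZ_2}$.

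Next I would reduce the level of a line to a congruence. By the construction of \S\ref{construction}, the vertices of the level-$k$ triangles form the coset $q_1 + \cdots + q_k + 2^kQ$, a side-$2^k$ equilateral triangulation. Hence a fixed $a$-line $L$ carries a level-$k$ edge in its direction if and only if it carries a point of that coset, so $L$ has infinite level precisely when it carries level-$k$ vertices of the coset for arbitrarily large $k$. Parameterising the three families of lines --- the lines $\{m a_1 + n a_2 : m \in \ZZ\}$ (direction $a_1$, parameter $n$), the lines $\{m a_1 + n a_2 : n \in \ZZ\}$ (direction $a_2$, parameter $m$), and the lines $\{m a_1 + n a_2 : m - n = c\}$ (direction $a_1+a_2$, parameter $c$) --- the condition ``$L$ carries a level-$k$ vertex'' becomes, respectively, $n \equiv \beta_k$, $m \equiv \alpha_k$, and $c \equiv \alpha_k - \beta_k \pmod{2^k}$.

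Finally I would pass to the limit. Using $\alpha_k \equiv \alpha$, $\beta_k \equiv \beta \pmod{2^k}$, each congruence becomes $n \equiv \beta$, $m \equiv \alpha$, $c \equiv \alpha - \beta \pmod{2^k}$; these conditions are nested in $k$, so holding for infinitely many $k$ is equivalent to holding for all $k$, i.e. to the integer parameter being equal in $\overline{\ZZ_2}$ to $\beta$, $\alpha$, or $\alpha - \beta$ respectively --- which forces that coordinate to be an honest integer. Thus an infinite-level $a_1$-line exists iff $\beta \in \ZZ$, which is exactly ${\bf q} \in \beta a_2 + \overline{\ZZ_2}\,a_1$; similarly $\alpha \in \ZZ$ gives ${\bf q} \in \alpha a_1 + \overline{\ZZ_2}\,a_2$, and $\alpha - \beta \in \ZZ$ gives ${\bf q} \in (\alpha - \beta)a_1 + \overline{\ZZ_2}(a_1 + a_2)$. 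Taking $x$ to be the displayed element of $Q$ in each case yields ${\bf q} \in x + \overline{\ZZ_2}\,a$, and the unique such line is then $x + \ZZ a$ (e.g. $\beta a_2 + \ZZ a_1$ in the first case), which is precisely the stated set of $Q$-points lying on it.

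The main obstacle I anticipate is the bookkeeping in the middle step: verifying rigorously that the existence of a level-$k$ edge on $L$ is equivalent to the existence of a level-$k$ vertex of the coset on $L$, so that counting coset points suffices. This rests on the elementary but essential fact that in the side-$2^k$ equilateral triangulation with vertex set $q_1 + \cdots + q_k + 2^kQ$, nearest-neighbour vertices in each of the three lattice directions are always joined by a genuine triangle edge of length $2^k$ lying along the corresponding line; consequently a coset vertex on $L$ always produces a level-$k$ edge on $L$, and no interference from longer edges can affect the count. Once this is pinned down, the $2$-adic nesting argument is routine.
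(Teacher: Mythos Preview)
Your argument is correct. The paper's proof reaches the same conclusion by a slightly different route: rather than splitting $\overline Q$ into coordinates and reducing each of the three directions to a single $2$-adic congruence on the line parameter, it fixes the infinite-level line $l = x + \ZZ a$, picks a level-$k$ vertex $y_k \in l$ for each $k$, observes that $y_{k+1} - y_k \in 2^kQ \cap \ZZ a = 2^k\ZZ a$, and concludes directly that $\{y_k\}$ is $Q$-adically Cauchy with limit ${\bf q} = x + {\bf u}\,a$ for some ${\bf u}\in\overline{\ZZ_2}$. The converse is then immediate, since the partial sums of $x + {\bf u}\,a$ supply level-$k$ vertices on $x + \ZZ a$ for every $k$. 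Your coordinate approach has the virtue of making the three cases completely explicit and transparent (one sees at once that the condition pins down exactly one $2$-adic coordinate of ${\bf q}$ to be an integer), while the paper's Cauchy-sequence argument is uniform in $a$ and avoids the case split. Your anticipated obstacle---that a level-$k$ vertex on $L$ forces a level-$k$ edge on $L$---is exactly right and is handled in the paper by the observation that $l$ contains edges of all levels up to its own; your justification via nearest neighbours in the side-$2^k$ triangulation is equally valid.
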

\noindent
{\bf Proof:} All lines of the triangulation are in the directions $\pm a_1, \pm a_2,\pm (a_1+ a_2)$
and all lines of the triangulation contain edges of all levels from $0$ up to the level of the line itself.
Thus the points of $Q$ on any line $l$ of the triangulation are always a set of the form
$x + \ZZ a$ where $a \in \{a_1,a_2, a_1+a_2\}$ and $x \in Q \cap l$. 

Suppose that we have a line $l$ of infinite level and its intersection with 
$Q$ is contained in $x +\ZZ a$. The line $l$ has elements $y_1, y_2, \dots$ where $y_k$
is a vertex of a triangle of level $k$. This means that $y_1 \in q_1 + 2Q, y_2 \in q_1+ q_2 +4Q, \dots$.
We conclude that $\{y_k\} \to {\bf q}$. Furthermore $y_{k+1} - y_k \in 2^k Q \cap \ZZ a = 2^k \ZZ a$. This is
true for all $k\ge 0$ if we define $y_0 = x$. 
Writing $y_{k+1} - y_k = 2^k u_k a$ with $u_k \in \ZZ$ and ${\bf u} = (0, u_1, \dots, \sum_{j=1}^k u_j 2^j, \dots)$, we have
\[ y_{k+1} = x + \left(\sum_{j=1}^k u_j2^j\right)a \ \rightarrow \ x + {\bf u} a  \]
where $ {\bf u} \in \overline{\ZZ_2}$. Thus ${\bf q} = x + {\bf u} a $. This proves the 
only if part of the Lemma. 

Going in the reverse direction, if ${\bf q} = x + {\bf u} a $ then this is a prescription for
a line of points in $Q$ that have vertices of all levels. Then the line is of infinite level. \qed

\begin{figure}
\centering
\includegraphics[width=8cm]{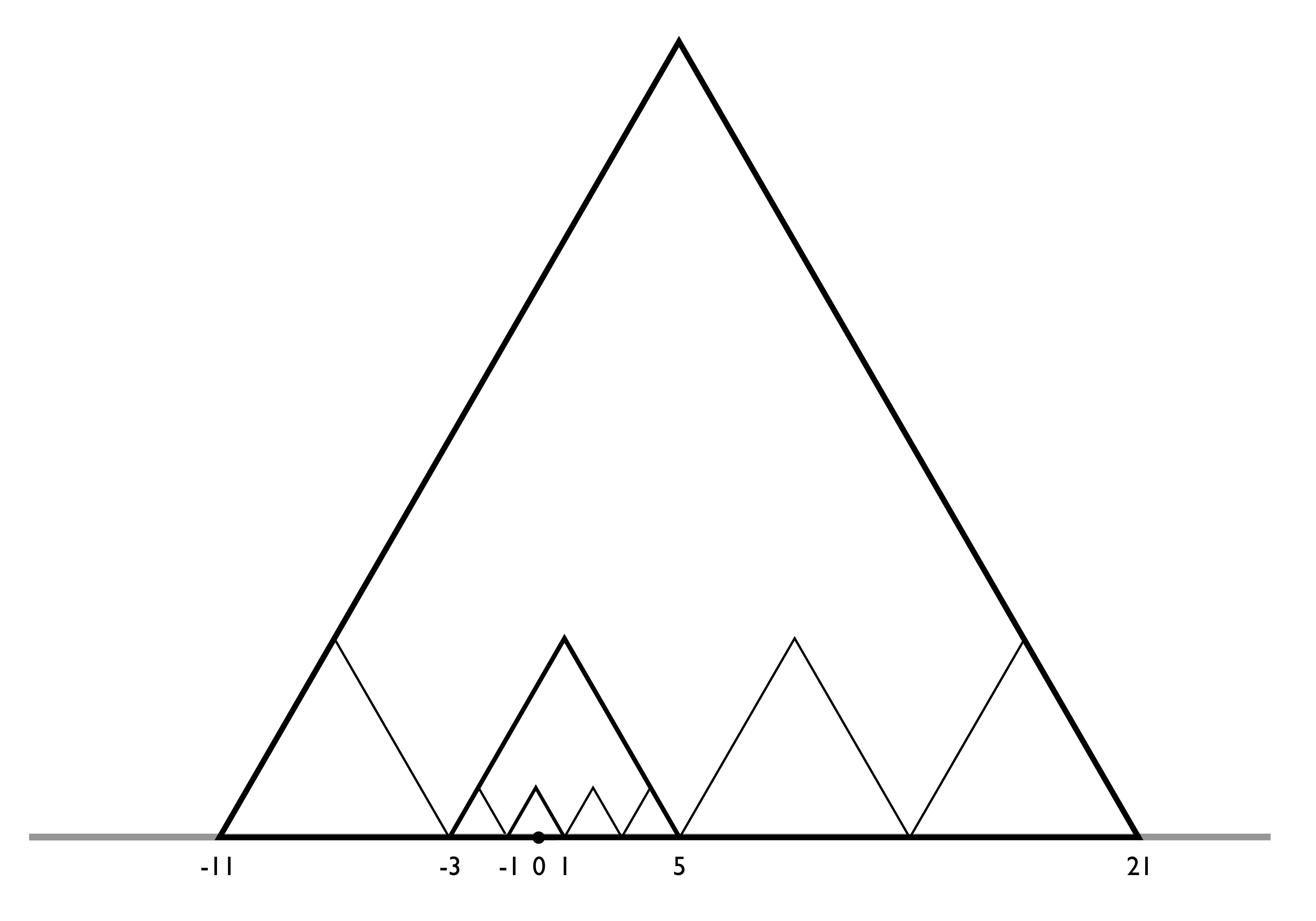}
\caption{This shows a sketch of how a tiling with an infinite $a$-line (the horizontal line)
can be constructed so that it is generic-w. Here ${\bf q} = za_1$, where 
$z$ is the $2$-adic integer $(1,1,5,5,21,21, 85, 85, \dots)$ (the $\mod 2,4,8, 16,32, 64, 128, 256 \dots$ values).
Some triangles of edge lengths $2,8,32$ are shown.
There are triangles of arbitrary large side lengths on the horizontal line, but the triangulation does not admit
a second infinite $a$-line and cannot admit
an infinite $w$-line since ${\bf q}$ is of the wrong form.}
\label{genericwNota}
\end{figure}

\begin{prop} \label{concurrent exceptional-a}
If a tiling $\Lambda \in X_Q$ has an infinite a-line then it is in $X_Q\backslash X_Q^{gen}$ 
and has either precisely one infinite
a-line or three infinite a-lines which are concurrent. The latter case occurs if and only if ${\bf q} \in Q$, where ${\bf q} = \xi(\Lambda)$.
\end{prop}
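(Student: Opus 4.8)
The plan is to reduce the statement to the characterisation of infinite $a$-lines provided by Proposition~\ref{exceptional-a} and then to count, for a fixed ${\bf q} = \xi(\Lambda)$, how many such lines the triangulation $\CalT({\bf q})$ can carry. That $\Lambda \in X_Q\backslash X_Q^{gen}$ is immediate: an infinite $a$-line is by definition a violation of generic-$a$, and a tiling is generic only when it is both generic-$a$ and generic-$w$. The first observation I would record is that in each of the three directions $a\in\{a_1,a_2,a_1+a_2\}$ there is at most one infinite $a$-line. Indeed, by Proposition~\ref{exceptional-a} an infinite line in direction $a$ forces ${\bf q}\in x+\overline{\ZZ_2}\,a$ for some $x\in Q$, and its $Q$-points are $x+\ZZ a$; if ${\bf q}=x+{\bf u}a=x'+{\bf v}a$ then $x-x'=({\bf v}-{\bf u})a\in \overline{\ZZ_2}\,a\cap Q=\ZZ a$, so $x+\ZZ a=x'+\ZZ a$ and the line is unique. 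In particular there are at most three infinite $a$-lines in all.

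Next I would work in the coordinates $\overline Q=\overline{\ZZ_2}\,a_1\oplus\overline{\ZZ_2}\,a_2$ afforded by the $Q$-adic completion, writing ${\bf q}=\alpha a_1+\beta a_2$ with $\alpha,\beta\in\overline{\ZZ_2}$. Setting $E_a:=Q+\overline{\ZZ_2}\,a$, Proposition~\ref{exceptional-a} says that $\CalT({\bf q})$ has an infinite line in direction $a$ exactly when ${\bf q}\in E_a$. Using $Q=\ZZ a_1\oplus\ZZ a_2$ (and, for the third direction, the change of basis $a_2=(a_1+a_2)-a_1$), a direct computation gives
\[
{\bf q}\in E_{a_1}\iff \beta\in\ZZ,\qquad
{\bf q}\in E_{a_2}\iff \alpha\in\ZZ,\qquad
{\bf q}\in E_{a_1+a_2}\iff \alpha-\beta\in\ZZ.
\]
The crux is now an elementary fact about $\ZZ\subset\overline{\ZZ_2}$: since $\ZZ$ is a subgroup, any two of the three quantities $\alpha,\beta,\alpha-\beta$ lying in $\ZZ$ forces the third into $\ZZ$ as well. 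Hence the three integrality conditions hold in numbers $0$, $1$ or $3$, but never exactly $2$; combined with the standing hypothesis that $\Lambda$ has at least one infinite $a$-line, this says $\Lambda$ has either one or three such lines. Moreover all three conditions hold simultaneously iff $\alpha,\beta\in\ZZ$, i.e. iff ${\bf q}=\alpha a_1+\beta a_2\in\ZZ a_1+\ZZ a_2=Q$, which identifies the three-line case with ${\bf q}\in Q$.

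It remains to check concurrency when ${\bf q}\in Q$. Here ${\bf q}$ is the image of a genuine lattice point $p_0\in Q$, and for each direction $a$ one may take the representation ${\bf q}=p_0+0\cdot a$ in Proposition~\ref{exceptional-a}; the resulting infinite line has $Q$-points $p_0+\ZZ a$, i.e. it is the geometric line through $p_0$ in direction $a$. The three infinite $a$-lines therefore all pass through $p_0$ and are concurrent. The main obstacle, and really the only content of the argument, is the linear dependence of the three edge directions expressed through the relation $(a_1+a_2)=a_1+a_2$: it is exactly this that collapses the three integrality conditions into the pattern $0,1,3$ and rules out exactly two infinite $a$-lines. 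Everything else is bookkeeping with the decomposition $\overline Q=\overline{\ZZ_2}\,a_1\oplus\overline{\ZZ_2}\,a_2$ and the inclusion $\ZZ\subset\overline{\ZZ_2}$.
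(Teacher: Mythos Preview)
Your proof is correct. Both your argument and the paper's rest on Proposition~\ref{exceptional-a} and the linear independence of any two of $a_1, a_2, a_1+a_2$ over $\overline{\ZZ_2}$, but the organisation differs. The paper argues directly: given two infinite $a$-lines it first rules out the parallel case by a geometric observation (two distinct parallel infinite-level lines would force overlapping triangles of arbitrarily large size), then writes ${\bf q}=x+{\bf u}a=y+{\bf v}b$ and uses that $\{a,b\}$ is a $\ZZ$-basis of $Q$ to conclude ${\bf u},{\bf v}\in\ZZ$, hence ${\bf q}\in Q$; finally it observes that ${\bf q}\in Q$ makes ${\bf q}$ a vertex of all levels, giving three concurrent lines. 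You instead fix coordinates ${\bf q}=\alpha a_1+\beta a_2$ and translate the three conditions ${\bf q}\in E_a$ into the integrality of $\beta$, $\alpha$, $\alpha-\beta$, from which the $0/1/3$ pattern falls out of the subgroup property of $\ZZ$; your uniqueness-per-direction step is also purely algebraic rather than geometric. Your route is a little more systematic and makes the trichotomy transparent from the outset, while the paper's is more hands-on and brings in the geometric picture of why two parallel infinite $a$-lines are impossible; the content is the same.
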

\noindent
{\bf Proof:} Let $\Lambda \in X_Q$ have an infinite a-line $l$. We already know that $\Lambda \in X_Q\backslash X_Q^{gen}$ and ${\bf q}  
= x + {\bf u} a$ for some $x\in Q$, ${\bf u} \in \overline{\ZZ_2}$, and some $a \in\{a_1,a_2, a_1+a_2\}$ from Lemma~\ref{exceptional-a}.
If it has a second (different) infinite line $l'$ then similarly ${\bf q}  = y + {\bf v} b$
where $y\in Q$, ${\bf v} \in \overline{\ZZ_2}$, and $b \in\{a_1,a_2, a_1+a_2\}$. Certainly $a \ne b$
for otherwise the two lines are parallel and this leads to overlapping triangles of arbitrarily large size, which
cannot happen. But we have $y-x  \in Q \cap (\overline{\ZZ_2}a +\overline{\ZZ_2}b) =
\ZZ a + \ZZ b$. Since $a,b$ are linearly independent over $\overline{\ZZ_2}$ and 
$y-x = {\bf u}a - {\bf v}b$ we see
that ${\bf u}$ and ${\bf v}$ are actually in $\ZZ$. Then ${\bf q} \in Q$. We will indicate this by
writing $q$ for $ {\bf q}$. 

In this case, since $q \equiv q_1 + \cdots + q_k \mod 2^kQ$ we find that $q$
is a vertex of a level $k$ triangle, for all $k$. Since this is true for all $k$, $q$ is a point through 
which infinite level
lines in all three directions $\{a_1,a_2, a_1+a_2\}$ pass. Thus the existence of two infinite
lines implies the existence of three concurrent lines.

In the other direction, if ${\bf q } \in Q$ then as we have just seen there will be three
concurrent infinite lines passing through it. \qed 

\smallskip
The case of a tiling with three concurrent infinite a-lines in Proposition~\ref{concurrent exceptional-a} 
is called a central hexagon tiling 
({\bf CHT} tiling) in \cite{ST}. We also refer to them as {\bf iCa-L} tilings. Edge shifting is not defined along these three lines, and we shall
see that we have the freedom to shift them arbitrarily to produce legal tilings. The tilings 
in which there is one infinite a-line are designated as {\bf ia-L} tilings.
\medskip

\begin{figure}
\centering
\includegraphics[width=8cm]{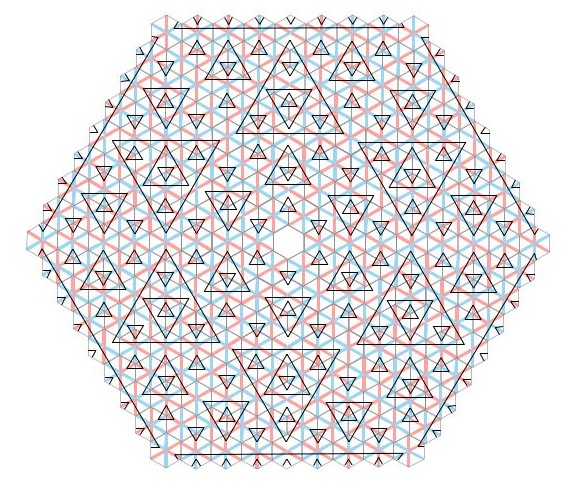}
\caption{The (central part of a) central hexagon ({\bf CHT}) tiling. Full (edge-shifted) triangles of 
levels $0,1,2$ are shown. At the outside edges one
can see the beginnings of triangles of level $3$. The rays from the central hexagon 
in the six $a$-directions will have infinite $a$-lines in them. However the edge shifting rules
cannot be applied to them because they are of infinite level -- they are not composed of edges of
finite triangles. In the end a full tiling is obtained by placing a fully decorated tile into the empty
central hexagon. There are $12$ ways to do this, and each way then determines the rest of the tiling
completely. These tilings violate both forms of generic condition.}
\label{CHT-tiling}
\end{figure}

\subsubsection{Violation of generic-w} The case of violation of generic-w is somewhat similar, though it takes more care. One
aspect of this is to avoid problems of $3$-torsion in $\overline P$, which we shall do by
staying inside $\overline Q$ where this problem does not occur. Thus in the discussion below
the quantity $3w$, where $w \in \{w_1,w_2, w_2 - w_1\}$, is of course in $Q$, but when
we see it with coefficients from $\overline {\ZZ_2}$ we shall understand it as being
in $\overline Q$ (as opposed to being in $\overline P$). Another problem is that the violation
of generic-w is not totally disjoint from the violation of generic-a, as we shall see. 

\begin{prop} \label{exceptional-w}
$\Lambda({\bf q})$ has a w-line of infinite level if and only if \, 
${\bf q} \in x + \overline{\ZZ_2}\,3w$
for some $w \in \{w_1,w_2, w_2 - w_1\}$ and some $x \in Q$. 
Furthermore when this happens the points of $Q$ lying on the infinite-level-line are those of the set
 $x + {\ZZ} \,3w$. 
\end{prop}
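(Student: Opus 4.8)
The plan is to mirror the proof of Proposition~\ref{exceptional-a}, replacing the rôle played by vertices on $a$-lines with the median structure of $w$-lines. Two geometric facts set up the needed dictionary. First, a line in a direction $w\in\{w_1,w_2,w_2-w_1\}$ meets $Q$ in a coset of $Q\cap\RR w=\ZZ\,3w$: indeed $w,2w\notin Q$ but $3w\in Q$ (e.g. $3w_1=2a_1+a_2$), so once such a line $l$ contains a point $x\in Q$ we have $Q\cap l=x+\ZZ\,3w$. Second, the three $w$-lines through the centroid of any triangle of the triangulation are precisely its three medians, pointing in the directions $w_1,w_2,w_2-w_1$; hence a $w$-line through the centroid of a level-$k$ triangle $T$ also passes through a vertex of $T$. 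Conversely, at any vertex $v$ the six level-$k$ triangles meeting at $v$ have medians from $v$ in all six directions $\pm w_1,\pm w_2,\pm(w_2-w_1)$, so for each admissible $w$ some level-$k$ triangle at $v$ has $v+\RR w$ as a median. This converts ``centroid of a level-$k$ triangle on $l$'' into ``vertex of a level-$k$ triangle on $l$'' and back.

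For the ``only if'' direction I would argue as follows. If $l$ is a $w$-line of infinite level, then it carries centroids of triangles of arbitrarily large level; choose $c_n\in l$ a centroid of a level-$k_n$ triangle $T_n$ with $k_n$ strictly increasing. By the median fact $l$ is the median of $T_n$ in direction $w$, hence contains a vertex $v_n$ of $T_n$. Being a vertex of a level-$k_n$ triangle, $v_n\equiv q_1+\cdots+q_{k_n}\pmod{2^{k_n}Q}$, so $v_n\to{\bf q}$ in the $Q$-adic topology. Writing $v_n=x+m_n\,3w$ with $x\in Q\cap l$ and $m_n\in\ZZ$, the relation $v_{n+1}-v_n\in 2^{k_n}Q\cap\RR w=2^{k_n}\ZZ\,3w$ shows $m_{n+1}-m_n\in 2^{k_n}\ZZ$; since $k_n\to\infty$, the sequence $(m_n)$ is $2$-adically Cauchy with limit ${\bf m}\in\overline{\ZZ_2}$. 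Passing to the limit yields ${\bf q}=x+{\bf m}\,3w\in x+\overline{\ZZ_2}\,3w$, and at the same time identifies $Q\cap l=x+\ZZ\,3w$.

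For the converse, assume ${\bf q}=x+{\bf u}\,3w$ with $x\in Q$, ${\bf u}\in\overline{\ZZ_2}$, and set $l=x+\RR w$. For each $k$ I would pick an integer $m_k\equiv{\bf u}\pmod{2^k}$ and put $v_k:=x+m_k\,3w\in Q\cap l$. Then $v_k-{\bf q}=(m_k-{\bf u})\,3w\in 2^k\overline{\ZZ_2}\,3w\subset 2^k\overline Q$; combining this with ${\bf q}-(q_1+\cdots+q_k)\in 2^k\overline Q$ and $v_k\in Q$ gives $v_k-(q_1+\cdots+q_k)\in 2^k\overline Q\cap Q=2^kQ$, so $v_k$ is a vertex of a level-$k$ triangle lying on $l$. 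By the vertex count above, some level-$k$ triangle at $v_k$ has $l$ as a median in direction $w$, so its centroid lies on $l$ and is a centroid of a level-$k$ triangle. Thus $l$ carries centroids of triangles of every level, i.e. it is a $w$-line of infinite level, and $Q\cap l=x+\ZZ\,3w$.

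The step needing the most care, and the reason the statement is phrased with $3w$ rather than $w$, is to keep the entire argument inside $\overline Q$ so as to avoid the $3$-torsion of $\overline P$ flagged in Remark~\ref{subtlePoint}: although $w\notin Q$, one has $3w\in Q$ and $\overline{\ZZ_2}\,3w\subset\overline Q$, so the limits above are unambiguous. The only genuinely geometric obstacle is establishing the median identification of the $w$-lines through a centroid together with the six-median count at a vertex; once that dictionary is in place, the $2$-adic convergence argument is essentially identical to the $a$-line case.
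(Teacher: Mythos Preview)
Your proof is correct and follows essentially the same route as the paper's: both arguments rest on the observation that a $w$-line through a centroid of a level-$k$ triangle is a median and hence contains a vertex of that triangle, after which the $2$-adic convergence of those vertices to ${\bf q}$ proceeds exactly as in the $a$-line case (Proposition~\ref{exceptional-a}). Your treatment is in fact a bit more explicit than the paper's about the median/vertex dictionary and about the $3$-torsion issue forcing the use of $3w$, but the underlying strategy and the key computation are the same.
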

\noindent
{\bf Proof:} All w-lines deriving from the triangulation are necessarily in the directions 
$w \in \{\pm w_1, \pm w_2,\pm (w_2- w_1)\}$. Of course $3w \in Q$, and $mw \in Q$
iff $3|m$. It really makes
no difference which of the six choices $w$ is, but for convenience in presentation
we shall take herewith $w= w_2$ so that $3w= a_1 +2a_2$. This is in the vertical direction
in the plane. 

All w-lines contain centroids of levels up to the level of the line itself. Furthermore
if a w-line contains a centroid of level $k$ then it also contains one of the vertices of the
corresponding triangle and so also at least one point of $Q$ of level $k$.
It follows that for any w-line $l$ in the direction $w$ there is an $x\in Q$ so that the set of points
of $Q$ on $l$ is the set $x + \ZZ \,3w = x + \ZZ(a_1+2a_2)$.

Suppose that we have a w-line $l$ of infinite level and its intersection with 
$Q$ is $x + \ZZ(a_1+2a_2)$. Then the line $l$ has elements $y_1, y_2, \dots$ where $y_k$
is a vertex of a triangle of level $k$. This means that $y_1 \in q_1 + 2Q, y_2 \in q_1+ q_2 +4Q, \dots$.
We conclude that $\{y_k\} \to {\bf q}$. Furthermore $y_{k+1} - y_k \in 2^k Q \cap \ZZ (a_1+2a_2)$. 
This is true for all $k\ge 0$ if we define $y_0 = x$. 
Writing $y_{k+1} - y_k = 2^k u_k (a_1+2a_2)$ with $u_k \in \ZZ$ and ${\bf u} = (0, u_1, \dots, \sum_{j=1}^k u_j, \dots) \in \overline{\ZZ_2}$, we have
\[ y_{k+1} = x + \left(\sum_{j=0}^k u_j2^j\right)\,(a_1+2a_2) \rightarrow x + {\bf u}\,(a_1+2a_2).  \]
Thus ${\bf q} = x + {\bf u}\,(a_1+2a_2)$. This proves the 
only if part of the Lemma. 

Going in the reverse direction, if ${\bf q} = x + {\bf u}\, (a_1+2a_2) $ then this is a prescription for
a line of points in $Q$ that have vertices of all levels. The corresponding
w-line has centroids of unbounded levels, so the line is a w-line of infinite level. \qed

\begin{prop} \label{concurrent exceptional-w}
If a tiling $\Lambda \in X_Q$ has an infinite w-line then $\Lambda \in X_Q\backslash X_Q^{gen}$
and it has either precisely one infinite
w-line or three infinite w-lines which are concurrent. The latter case occurs if and only if 
the point of concurrency is either a point of infinite level (discussed in Prop.~\ref{concurrent
exceptional-a}) or a non-orientable point (discussed in Prop.~\ref{noOrientation}).
\end{prop}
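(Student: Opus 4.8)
The plan is to mirror the structure of Proposition~\ref{concurrent exceptional-a}, using Proposition~\ref{exceptional-w} as the $w$-analogue of Proposition~\ref{exceptional-a}. First I would suppose $\Lambda$ has an infinite $w$-line $l$, so by Proposition~\ref{exceptional-w} we have ${\bf q} = x + {\bf u}\,3w$ for some $x\in Q$, ${\bf u}\in\overline{\ZZ_2}$, and $w\in\{w_1,w_2,w_2-w_1\}$. Since any infinite-level line immediately forces the failure of generic-$w$, we get $\Lambda\in X_Q\backslash X_Q^{gen}$ at once. The core task is then to show that a \emph{second} distinct infinite $w$-line $l'$, say with ${\bf q}=y+{\bf v}\,3w'$, can coexist with $l$ only when the two are concurrent and the meeting point is of the special type described, and that exactly three such lines occur together.

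The first substantive step is to rule out $w'=w$: two parallel infinite $w$-lines would force overlapping triangles of unbounded size, exactly as in the $a$-case. So $w\neq w'$, and I would intersect the two representations to study $y-x\in Q\cap(\overline{\ZZ_2}\,3w+\overline{\ZZ_2}\,3w')$. Here is where I must be careful about the $3$-torsion warning preceding the proposition: the vectors $3w,3w'$ already lie in $Q$, and by restricting all $\overline{\ZZ_2}$-combinations to $\overline Q$ (never $\overline P$) the computation is torsion-free. Because $3w,3w'$ are linearly independent over $\overline{\ZZ_2}$, the argument of Proposition~\ref{concurrent exceptional-a} applies verbatim to conclude ${\bf u},{\bf v}\in\ZZ$, so the point of concurrency ${\bf q}$ lies in $Q$ if both lines terminate at a genuine $Q$-point. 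This is the step I expect to be the main obstacle, because unlike the purely $a$-geometric situation the three $w$-directions meet at a \emph{centroid} in $P\backslash Q$ rather than at a lattice vertex, so the point of concurrency need not be in $Q$ itself.

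The resolution, and the reason the statement bifurcates into two cases, is that the common point of the three $w$-lines is a point $p\in P\backslash Q$ through which all three coloring lines pass. By the discussion following Lemma~\ref{sFunctions} and in Proposition~\ref{noOrientation}, such a $p$ carries $w$-lines of unbounded level in all three directions precisely when $p$ is either an actual point of infinite level (equivalently ${\bf q}\in Q$, the {\bf iCa-L}/{\bf CHT} situation of Proposition~\ref{concurrent exceptional-a}), or a non-orientable point (the situation of Proposition~\ref{noOrientation}, where ${\bf q}\in -{\bf s_2}-2{\bf s_1}+Q$ or ${\bf q}\in -{\bf s_1}-2{\bf s_2}+Q$). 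I would therefore translate the condition that ${\bf q}$ admit two incompatible $w$-line representations into membership of ${\bf q}$ in one of these two explicit coset families, using the identities $3({\bf w_1}+{\bf s_2}+2{\bf s_1})=0$ type relations from Lemma~\ref{sFunctions} to identify the non-orientable centroid as the concurrency point.

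Finally, to get \emph{three} concurrent lines rather than two, I would argue as in the $a$-case that once the point of concurrency $p$ (whether of infinite level or non-orientable) is fixed, it sits on triangle centroids of every level in all three $w$-directions by the symmetry of the construction, so all three infinite $w$-lines through it appear simultaneously; conversely a single infinite $w$-line at a \emph{generic} point of that line forces only the one direction. Assembling these pieces yields the dichotomy: either one infinite $w$-line, or three concurrent ones meeting at an infinite-level point or a non-orientable point, which is exactly the claimed statement.
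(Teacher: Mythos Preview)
Your overall architecture is right, but the step you flag as ``applying Proposition~\ref{concurrent exceptional-a} verbatim to conclude ${\bf u},{\bf v}\in\ZZ$'' does \emph{not} go through, and this is the genuine gap. In the $a$-case the vectors $a,b\in\{a_1,a_2,a_1+a_2\}$ contain a $\ZZ$-basis of $Q$, so from ${\bf u}a-{\bf v}b\in Q$ one reads off ${\bf u},{\bf v}\in\ZZ$ by comparing coordinates. In the $w$-case the vectors $3w,3w'$ span only an index-$3$ sublattice of $Q$ (e.g.\ $3w_1=2a_1+a_2$, $3w_2=a_1+2a_2$, determinant $3$). Since $3$ is a unit in $\overline{\ZZ_2}$, the system $y-x={\bf v}\cdot 3w'-{\bf u}\cdot 3w$ is always solvable in $\overline{\ZZ_2}$ for arbitrary $y-x\in Q$, and the solutions need not be integers. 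So the independence argument alone cannot force ${\bf q}\in Q$, and your subsequent plan to ``translate the condition \dots\ into membership of ${\bf q}$ in one of these two explicit coset families'' via the torsion identities is too vague to fill the hole.

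The paper's resolution is a clean geometric case split that you are circling but do not make explicit: distinguish whether the two infinite $w$-lines meet at a point of $Q$ or at a point of $P\backslash Q$. If they meet in $Q$, one may take $x=y$ equal to that point, so the equation becomes ${\bf u}\cdot 3w={\bf v}\cdot 3w'$ with no constant term; comparing $a_1,a_2$-coefficients gives ${\bf u}=2{\bf v}$ and $2{\bf u}={\bf v}$, whence ${\bf u}={\bf v}=0$ and ${\bf q}=x\in Q$ (the {\bf CHT} case). If they meet at $p\in P\backslash Q$, then $p$ is a centroid; if $p$ had finite orientation level, the coloring argument of \S\ref{color} would force two of the three $w$-lines through $p$ to have finite level, contradicting the assumption. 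Hence $p$ is non-orientable, landing in the second alternative. Also note that your reason for excluding $w'=w$ (``overlapping triangles of unbounded size'') is not quite the paper's: the paper argues that parallel $w$-lines carrying level-$k$ centroids are spaced $2^{k-1}$ apart, so two fixed distinct parallel lines cannot both carry centroids of arbitrarily high level.
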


\noindent
{\bf Proof:}
Let $\Lambda \in X_Q$ have an infinite w-line $l$.  We again take this to be in the direction
of $w_2$. Then ${\bf q} := \xi(\Lambda) 
= x + {\bf u}\,(a_1+2a_2) $ for some $x\in Q$, ${\bf u} \in \overline{\ZZ_2}$. 

Suppose that it has a second (different) infinite line $l'$. Then similarly ${\bf q} = y + {\bf v}\, 3w'$
where $y\in Q$, ${\bf v} \in \overline{\ZZ_2}$, and $3w'  \in \{2a_1+a_2, a_1+ 2a_2, a_2-a_1\}$. 
As above, we note
that $w \ne w'$ because if $w = w'$, the two lines are parallel and 
each of the two lines contains vertices of arbitrarily large levels. But the parallel lines through vertices and centroids
of level $k$ are spaced at a distance of $2^{k-1}$ apart. Thus no two distinct parallel w-lines can both
carry centroids of arbitrary level. 

Again, for concreteness we shall take a specific choice for $w'$, namely $w'= w_1= 2a_1 +a_2$.
Other choices lead to similar results.

There are two scenarios. Either the two lines $l,l'$ meet at a point of $Q$ or not. Suppose
that they meet in a point of $Q$. Then we can choose $x=y$ and obtain 
\[x + {\bf u}\,(a_1+2a_2) = {\bf q} = x + {\bf v}\, (2a_1+a_2) \,.\]
Since $a_1$ and $a_2$ are independent in $\overline Q$ over $\overline{\ZZ_2}$
we obtain ${\bf u} = 2{\bf v}$ and ${2\bf u} = {\bf v}$. The only solution to this in 
$\overline{\ZZ_2}$ is  ${\bf u} = {\bf v} =0$. Thus ${\bf q} =x \in Q$. This puts us
in the situation of Prop.~\ref{concurrent exceptional-a}, the point of intersection
of the two lines is actually a vertex of infinite level, and this is a {\bf CHT} tiling.

The alternative is that $l,l'$ meet at a point $p$ of $P\backslash Q$. In this case we go back
to the discussion of coloring given in \S\ref{color}. The point $p$ is a centroid and it
either has infinite
level, in which case it has no orientation and we go to Prop.~\ref{noOrientation}, or it has a finite level in which two of the three w-lines through it have forced color and finite level which is a contradiction. This proves
the result. \qed 

Infinite level a-lines occur if and only if ${\bf q} \in x +\overline{\ZZ_2}a $ and
infinite level w-lines occur if and only if ${\bf q} \in x +\overline{\ZZ_2}w $, with $a,w$ being in the
basic $a$ and $w$ directions respectively. Three concurrent a-lines occur if and only ifs ${\bf q} \in Q$,
whereupon the condition for three concurrent w-lines also is true. These are the 
{\bf CHT} tilings. 

\renewcommand{\arraystretch}{1.5}
\begin{table}[htdp]
\caption{Summary of infinite level $a$-lines and $w$-lines.}
\begin{center}
\begin{tabular}{|c||c|c|}\hline
type & single & three concurrent\\
\hline\hline
infinite $a$-line & ${\bf q} \in Q+\overline{\ZZ_2} a$ & ${\bf q} \in Q$ \;{\bf CHT}\\
\hline
infinite $w$-line & ${\bf q} \in Q+\overline{\ZZ_2} w$ & ${\bf q} \in Q$ \;{\bf CHT}\\
& & ${\bf q} \in -{\bf s_2} - 2{\bf s_1}+Q$\, or \, ${\bf q} \in -{\bf s_1} - 2{\bf s_2}+Q$\\
\hline
\end{tabular}
\end{center}
\label{infiniteLevelLines}
\end{table}

Since the singular elements of $\overline Q$ lie on a countable union of lines, it is clear
that their total measure is $0$.

\begin{lemma}\label{singularMeasure0}
The set of singular ${\bf q} \in Q $ has Haar measure $0$. 
\end{lemma}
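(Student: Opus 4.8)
The plan is to pin down the singular locus in $\overline Q$ explicitly as a countable union of affine ``lines'' and then to check that each such line is Haar-null. First I would recall that, by definition, a value ${\bf q}\in\overline Q$ is singular exactly when $\CalT({\bf q})$ violates generic-$a$ or generic-$w$, i.e.\ exactly when it carries an $a$-line or a $w$-line of infinite level. Proposition~\ref{exceptional-a} says the first happens iff ${\bf q}\in x+\overline{\ZZ_2}\,a$ for some $x\in Q$ and some $a\in\{a_1,a_2,a_1+a_2\}$, and Proposition~\ref{exceptional-w} says the second happens iff ${\bf q}\in x+\overline{\ZZ_2}\,3w$ for some $x\in Q$ and some $w\in\{w_1,w_2,w_2-w_1\}$. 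Hence the singular set is exactly $A\cup W$, where $A=\bigcup_{a,\,x\in Q}(x+\overline{\ZZ_2}\,a)$ and $W=\bigcup_{w,\,x\in Q}(x+\overline{\ZZ_2}\,3w)$. Note that the special sub-loci --- the {\bf CHT} set ${\bf q}\in Q$ of Proposition~\ref{concurrent exceptional-a} and the non-orientable families ${\bf q}\in-{\bf s_2}-2{\bf s_1}+Q$, ${\bf q}\in-{\bf s_1}-2{\bf s_2}+Q$ of Propositions~\ref{noOrientation} and~\ref{concurrent exceptional-w} --- are already absorbed into $A\cup W$, so no extra cases arise.

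The heart of the argument is that a single line $x+\overline{\ZZ_2}\,a$ is a $\mu$-null set. Each is a translate of the closed subgroup $H_a:=\overline{\ZZ_2}\,a\subseteq\overline Q$, so by translation invariance of $\mu$ it suffices to show $\mu(H_a)=0$. Since $a$ is a primitive vector of $Q$, we may complete it to a $\ZZ$-basis of $Q$ and thereby split $\overline Q\cong\overline{\ZZ_2}\,a\oplus\overline{\ZZ_2}\,b$; thus $\overline Q/H_a\cong\overline{\ZZ_2}$ is infinite, i.e.\ $H_a$ has infinite index. A closed subgroup of positive Haar measure in a compact group has only finitely many cosets (they are disjoint, of equal positive measure, and their measures sum to $\mu(\overline Q)<\infty$), so infinite index forces $\mu(H_a)=0$. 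Equivalently, reducing modulo $2^k\overline Q$ one sees $H_a$ meets only the $2^k$ cosets $ja+2^k\overline Q$ with $0\le j<2^k$, out of the $4^k$ cosets of $2^k\overline Q$ in $\overline Q$, so $H_a$ occupies a fraction $2^k/4^k=2^{-k}\to 0$ of $\overline Q$. The vectors $3w_1=2a_1+a_2$, $3w_2=a_1+2a_2$, $3(w_2-w_1)=a_2-a_1$ are likewise primitive in $Q$, so the same argument (carried out inside $\overline Q$, to avoid the $3$-torsion of $\overline P$) gives $\mu(x+\overline{\ZZ_2}\,3w)=0$.

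Finally, $Q$ is a lattice, hence countable, and there are only three $a$-directions and three $w$-directions, so $A\cup W$ is a countable union of null sets and $\mu(A\cup W)=0$. As $A\cup W$ is precisely the singular locus, the lemma follows. The one delicate point is the nullity of a single line: it rests on $\overline{\ZZ_2}\,a$ being a rank-one, hence infinite-index, closed subgroup of the rank-two group $\overline Q$, and this is exactly what the primitivity of $a$ (and of each $3w$) provides.
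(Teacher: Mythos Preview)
Your proof is correct and follows exactly the approach the paper takes: the singular set is a countable union of affine lines in $\overline Q$, each of which is Haar-null. The paper actually dispatches this in a single sentence (``Since the singular elements of $\overline Q$ lie on a countable union of lines, it is clear that their total measure is $0$''), whereas you have supplied the details --- in particular the infinite-index/primitive-vector argument for why each line $x+\overline{\ZZ_2}\,a$ and $x+\overline{\ZZ_2}\,3w$ is null --- that the paper leaves implicit.
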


\begin{lemma} \label{infinite a and w lines} 
If a triangularization ${\mathcal T}({\bf q})$
has both an infinite level $a$-line and an infinite level $w$-line then their point of intersection
is a point of concurrence of three infinite level w-lines
and three infinite level a-lines, and the tiling is a {\bf CHT} tiling.
\end{lemma}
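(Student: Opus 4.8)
The plan is to combine the parametrizations of infinite $a$- and $w$-lines from Propositions~\ref{exceptional-a} and~\ref{exceptional-w} and to show that the coincidence of the two forces ${\bf q} \in Q$, which by Proposition~\ref{concurrent exceptional-a} is exactly the {\bf CHT} condition. So suppose $\CalT({\bf q})$ has an infinite-level $a$-line and an infinite-level $w$-line. By Proposition~\ref{exceptional-a} I may write ${\bf q} = x + {\bf u}\,a$ with $x \in Q$, ${\bf u} \in \overline{\ZZ_2}$ and $a \in \{a_1, a_2, a_1+a_2\}$, and by Proposition~\ref{exceptional-w} I may write ${\bf q} = y + {\bf v}\,3w$ with $y \in Q$, ${\bf v} \in \overline{\ZZ_2}$ and $3w \in \{2a_1 + a_2,\, a_1 + 2a_2,\, a_2 - a_1\}$. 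Subtracting,
\[ x - y = {\bf v}\,3w - {\bf u}\,a \in Q \cap \left(\overline{\ZZ_2}\,a + \overline{\ZZ_2}\,3w\right). \]
First I note that $a$ and $3w$ are never parallel: the $a$-directions point at $0^\circ, 60^\circ, 120^\circ$ while the $3w$-directions point at $30^\circ, 90^\circ, 150^\circ$. Being $\QQ$-linearly independent elements of $Q$, they are linearly independent over $\overline{\ZZ_2}$ in $\overline Q$, exactly as in the proofs of Propositions~\ref{concurrent exceptional-a} and~\ref{concurrent exceptional-w}.

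The crux is to identify the intersection $Q \cap (\overline{\ZZ_2}\,a + \overline{\ZZ_2}\,3w)$ with the honest sublattice $L := \ZZ a + \ZZ\,3w$; the inclusion $L \subseteq Q \cap (\overline{\ZZ_2}\,a + \overline{\ZZ_2}\,3w)$ is clear, and the content is the reverse inclusion. Here the key observation is that the index $[Q : L]$ is a power of $2$: the covolume of $L$ is $|a|\,|3w|\,\sin\theta = \sqrt 3\,\sin\theta$, where $\theta \in \{30^\circ, 90^\circ, 150^\circ\}$ is the angle between the two directions, so $[Q:L] = \sqrt3\,\sin\theta/(\sqrt3/2)$ equals $1$ or $2$ in every case (this can also be checked directly from the nine coordinate pairs). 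Consequently $2^N Q \subseteq L$ for some $N$, so $L$ is a finite union of cosets of $2^N Q$ and its closure is $\overline L = L + 2^N \overline Q$. If $l + t \in Q$ with $l \in L$ and $t \in 2^N\overline Q$, then $t \in Q \cap 2^N \overline Q = 2^N Q \subseteq L$, whence $l + t \in L$; thus $Q \cap \overline L = L$, as desired. I expect this step---ruling out an odd part of the index, which is what would otherwise spoil the identity (as the example $\ZZ a_1 + 3\ZZ a_2$ shows, whose closure is all of $\overline Q$ since $3$ is a unit in $\overline{\ZZ_2}$)---to be the main obstacle, and it is resolved entirely by the numerology of the $a$- and $3w$-directions.

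With the intersection identified, I write $x - y = n\,a + m\,3w$ with $n, m \in \ZZ$. Comparing with $x - y = -{\bf u}\,a + {\bf v}\,3w$ and using the $\overline{\ZZ_2}$-linear independence of $a$ and $3w$ gives ${\bf u} = -n \in \ZZ$ and ${\bf v} = m \in \ZZ$, so ${\bf q} = x + {\bf u}\,a \in Q$. In particular ${\bf q}$ lies on the $a$-line (whose $Q$-points are $x + \ZZ a$) and on the $w$-line (whose $Q$-points are $y + \ZZ\,3w$), so the point of intersection of the two lines is precisely ${\bf q} \in Q$. Finally, since ${\bf q} \in Q$, Proposition~\ref{concurrent exceptional-a} shows that ${\bf q}$ is a vertex of infinite level through which three concurrent infinite $a$-lines pass and that the tiling is a {\bf CHT} tiling; and since the point of concurrency ${\bf q}$ is a point of infinite level, Proposition~\ref{concurrent exceptional-w} shows that three concurrent infinite $w$-lines pass through it as well. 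This yields the claim.
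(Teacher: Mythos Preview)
Your proof is correct and follows the same line as the paper's own argument: parametrize the infinite $a$- and $w$-lines via Propositions~\ref{exceptional-a} and~\ref{exceptional-w}, subtract, use the $\overline{\ZZ_2}$-independence of $a$ and $3w$ to force the coefficients into $\ZZ$, and conclude ${\bf q}\in Q$, hence {\bf CHT}. The one place where you add real content is the index computation $[Q:\ZZ a+\ZZ\,3w]\in\{1,2\}$, which justifies the passage from $x-y\in Q$ to ${\bf u},{\bf v}\in\ZZ$; the paper's proof asserts this step (``since $x_2-x_1\in Q$, this forces ${\bf z_1},{\bf z_2}\in\ZZ$'') without the supporting calculation, and your counterexample $\ZZ a_1+3\ZZ a_2$ shows exactly why some such check is needed.
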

\noindent
{\bf Proof:} By Prop.~\ref{exceptional-a} and Prop.~\ref{exceptional-w},
\[{\bf q} \in x_1 + \overline{\ZZ_2} a \quad \mbox{and} \quad {\bf q} \in x_2 + \overline{\ZZ_2} 3w\]
for some $x_1,x_2 \in Q$ and $a\in \{a_1,a_2,a_1+a_2\}$, $w\in \{w_1,w_2,w_2-w_1\}$.
Putting these together, 
\[{\bf q} = x_1 + {\bf z_1} a = x_2 + {\bf z_2} 3w\]
for some ${\bf z_1,z_2} \in \overline{\ZZ_2}$. However $a$ and $3w$ are independent elements of 
$Q$ (over $\mathbb Z$), and hence are also independent over $\overline{\ZZ_2}$. Since
$x_2-x_1 \in Q$, this forces ${\bf z_1,z_2} \in \mathbb Z$. Thus $ {\bf q} \in Q$, which is the condition
for simultaneous concurrency of three a-lines and  three w-lines (Prop.~\ref{concurrent exceptional-a}). \qed

\medskip

\subsection{Coloring for the {\bf iCw-L} tilings}\  \label{Coloring for the exceptional tilings}
According to Prop.~\ref{noOrientation} we have a point of no orientation
precisely when ${\bf q} \in -{\bf s_2} - 2{\bf s_1} + Q$ or ${\bf q} \in -{\bf s_1} - 2{\bf s_2} + Q$.
In these cases, by Prop.~\ref{concurrent exceptional-w}, we have three
concurrent w-lines and their intersection is a point of no orientation. This point of intersection
is $x = {\bf q} +w_1+{\bf s_2} + 2{\bf s_1}$ or $x = {\bf q} +w_2+ {\bf s_1} + 2{\bf s_2}$. The former
can be anywhere in $w_1 +Q$ and the latter anywhere in $w_2+Q$. The triangulation can be
described as a set of nested triangles of levels $0,1,2,3,\dots$ (and all the lesser level
triangles that occur within them) all of which have the centroid $x$. The level $k=0$ triangle
is an up triangle in the $w_1$ case and a down triangle in the $w_2$ case. The infinite 
$l$-lines are in the directions $w_1, w_2, w_2-w_1$ through $x$ and these three lines
have no forced colorings. 

We call these tilings the {\bf iCw-L} tilings (infinite concurrent w-line tilings). We also refer
to the underlying triangulations with the same terminology.
See Figure \ref{iCw-L-tiling}.
\begin{figure}
\centering
\includegraphics[width=8cm]{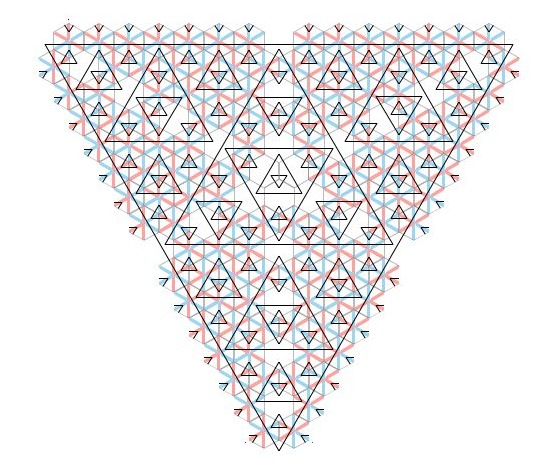}
\caption{{The \bf iCw-L} tilings.  The triangulation is generic-a but not generic-w. Of course the partial tiling shown is perfectly
consistent with generic tilings -- in fact all Taylor--Socolar tilings  contain this type of patch of tiles. 
However, if the pattern established in the picture is maintained at all scales, 
then indeed the result is a not a generic tiling since it fails generic-$w$.}
\label{iCw-L-tiling}
\end{figure}

The symmetry
belongs to the triangulation, not necessarily to the tilings themselves. The colorings of the 
three exceptional
lines of an {\bf iCw-L} tiling can be made in an arbitrary way without violating the tiling conditions
{\bf R1, R2} \,\cite{ST}. Of the $8$ possible colorings the two truly symmetric ones (the ones that give
an overall $3$-fold rotational symmetry -- including color symmetry -- to the actual tiling) are
exceptional in the sense that no other tilings in the Taylor--Socolar system 
have a point $p \in P\backslash Q$ (i.e a tile vertex) with the property that the three hexagon diagonals
emanating from it are all of the same color (see Prop.~\ref{threeColorThm}). These exceptional
symmetric {\bf iCw-L} tilings are called {\bf SiCw-L} tilings. In \cite{ST} these tilings are described as having
a `defect' at this point, and indeed they are not LI to any other tilings except other {\bf SiCw-L} tilings. 

Thus there are $2$ exceptional colorings for any {\bf iCw-L} triangulation. In the other $6$ colorings
there are at each hexagon vertex two diameters of the same color and one of the opposite
color, and we shall soon prove that they all occur in $X_Q$. 

Tilings for which there is just one infinite w-line in the triangulation are called {\bf iw-L} tilings.

\subsection{The structure of the hull}\ \label{HullStructure}
In this subsection we describe the hull $X_Q$ in more detail. We note that the only
symmetries of $X_Q$ which we discuss are translational symmetries (not rotational). These
translational symmetries are the elements of $Q$. Of course none of the elements of
$X_Q$ has any non-trivial translational symmetry; it is only the hull itself that has them. 
When we discuss LI classes below we mean local indistinguishability classes under translational symmetry. 

\begin{theorem} \label{minimal}
$X_Q$ consists of three LI classes, $X_Q^{b}$, $X_Q^{r}$, and $X_Q^\dagger$. 
Of these $X_Q^{b}$ 
is the countable set of {\bf SiCw-L} tilings with three blue-red (blue first) diameters emanating from some
hexagon vertex $q$, which form 
a single $Q$-orbit in $X_Q$, and $X_Q^{r}$ is the companion orbit with red-blue diameters. 
Both of these orbits are dense in $X_Q$. 

$X_Q^\dagger$ is the orbit closure of $X_Q^{gen}$ and contains all other tilings, including
all the {\bf iCw-L} tilings that are not color symmetric. 
Restricted to the minimal hull $X_Q^{\dagger}$, the mapping $\xi$ defined 
in \eqref{surjective map from X_Q to overlineQ} is:
\begin{itemize}
\item[(i)] $1:1$ on $X_Q^{gen}$;
\item[(ii)] $6:1$ at {\bf iCw-L} points except {\bf SiCw-L} points
\item[(iii)] $12:1$ at {\bf CHT} points;
\item[(iv)] $2:1$ at all other non-generic points. 
\end{itemize}    
\end{theorem}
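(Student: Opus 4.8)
The plan is to prove the two halves of the statement essentially independently: first the fibre count of $\xi$ on the minimal component $X_Q^\dagger$, which is a purely local computation at each type of singular point; and then the global assertion that $X_Q$ decomposes into the three LI classes $X_Q^\dagger$, $X_Q^b$, $X_Q^r$ with the stated closure behaviour. The backbone is the observation that over a fixed ${\bf q}$ the only freedom in producing a legal tiling comes from \emph{infinite-level} lines, since everything of finite level is forced by Lemma~\ref{generic-a and three short diameters} and the uniqueness results of \S\ref{color}.

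For the fibre count I would run through the singular strata catalogued in Table~\ref{infiniteLevelLines} and, for each, count how many ways the markings can be legally completed. Over a generic ${\bf q}$ the markings are forced, so $\xi$ is $1{:}1$; this is item (i) and is the content of the Proposition following \eqref{surjective map from X_Q to overlineQ}. An infinite $a$-line carries no containing triangle, so its stripe is not edge-shifted by the rule of \S\ref{Tilings}, and rule {\bf R1} forces the shift to be uniform along the whole line, a single binary choice; an infinite $w$-line carries no forcing centroid, so rule {\bf R2} determines its colouring only up to the binary choice of first colour. By Lemma~\ref{infinite a and w lines} a non-\textbf{CHT} singular tiling cannot carry both an infinite $a$-line and an infinite $w$-line, so for an \textbf{ia-L} or \textbf{iw-L} tiling exactly one binary choice is available and the fibre has two elements, giving (iv). At an \textbf{iCw-L} point there are three concurrent infinite $w$-lines (Prop.~\ref{concurrent exceptional-w}) while the triangulation is generic-$a$, so there are $2^3=8$ legal colourings; the two symmetric ones are the \textbf{SiCw-L} tilings, which I will show lie in $X_Q^b$ and $X_Q^r$ and hence drop out when we restrict to $X_Q^\dagger$, leaving the $6$ of (ii). At a \textbf{CHT} point three concurrent infinite $a$-lines and three concurrent infinite $w$-lines meet at the empty central hexagon; the stripe-shift and colour choices are not independent but are packaged by {\bf R1}, {\bf R2} into the choice of a single fully decorated central tile, of which there are $12$ (six stripe placements times two colour orientations), each propagating to a unique legal tiling, giving (iii).

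For the LI decomposition I would first record that $X_Q^\dagger$ is minimal: by Corollary~\ref{ms1} each generic tiling is a regular model set, hence repetitive, so all generic tilings are mutually LI and $X_Q^\dagger:=\overline{X_Q^{gen}}$ is a single LI class. Next I would show $X_Q^\dagger$ contains every non-\textbf{SiCw-L} tiling by a limiting argument: given such a $\Lambda$ choose generic ${\bf q}_n\to{\bf q}(\Lambda)$ in $\overline Q$ (the generic locus is dense, its complement having measure $0$ by Lemma~\ref{singularMeasure0}); on any ball the triangulation of $\xi^{-1}({\bf q}_n)$ eventually agrees with that of $\Lambda$, and on that ball the markings of $\Lambda$ coincide with the ones forced generically, so $\xi^{-1}({\bf q}_n)\to\Lambda$ and $\Lambda\in X_Q^\dagger$. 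The point where this fails is exactly the \textbf{SiCw-L} defect: by Prop.~\ref{threeColorThm} no generic tiling has a vertex at which all three diagonals share a colour, so a ball about the defect of an \textbf{SiCw-L} tiling occurs in no tiling of $X_Q^\dagger$. This both excludes the \textbf{SiCw-L} tilings from $X_Q^\dagger$ and shows that the blue-first and red-first families are not mutually LI (translation preserves colour), producing the two remaining classes $X_Q^b$, $X_Q^r$.

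Finally I would show each of $X_Q^b$, $X_Q^r$ is a single $Q$-orbit accumulating onto all of $X_Q^\dagger$. Rigidity gives the orbit statement: an \textbf{SiCw-L} tiling is determined by its defect vertex together with the cyclic colour order there, since the triangulation is the nested-triangle pattern of Prop.~\ref{noOrientation} centred at that vertex and the symmetric colouring of its type is unique; as $Q$ acts transitively on the admissible defect vertices of a fixed type, all blue-first \textbf{SiCw-L} tilings are $Q$-translates of one another, and likewise for red-first. The density (accumulation) statement follows by pushing the defect to infinity: the translates $\Lambda_b+v$, $v\in Q$, agree on larger and larger balls about the origin with generic-looking patches of $\Lambda_b$, so their limits exhaust $X_Q^\dagger$ and $\overline{Q\cdot\Lambda_b}\supseteq X_Q^\dagger$, and symmetrically for $\Lambda_r$. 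I expect the main obstacle to be the bookkeeping in this last paragraph: one must pin down the interaction between the up/down orientation of the central triangle (the two cosets of Prop.~\ref{noOrientation}) and the blue-first/red-first colour order, so as to confirm that a fixed colour order corresponds to a single $Q$-coset of defect vertices and hence to a single orbit. That verification, together with checking the legality and the correct LI placement of all $12$ \textbf{CHT} and all $8$ \textbf{iCw-L} completions, is where the real work lies; the remaining steps are direct applications of \S\ref{color}--\S\ref{TheHull}.
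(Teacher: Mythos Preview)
Your overall architecture matches the paper's, and the fibre counts (i)--(iv) are argued correctly. The genuine gap is in your step showing that every non-\textbf{SiCw-L} singular tiling $\Lambda$ lies in $X_Q^\dagger$. You write: choose generic ${\bf q}_n\to{\bf q}(\Lambda)$, then ``on that ball the markings of $\Lambda$ coincide with the ones forced generically, so $\xi^{-1}({\bf q}_n)\to\Lambda$''. This fails on any ball meeting the infinite-level line of $\Lambda$: there the marking of $\Lambda$ is a \emph{free} binary choice, whereas in $\xi^{-1}({\bf q}_n)$ that same line has some large finite level and its shift or colour is \emph{forced} by the surrounding level-$(k+1)$ triangle --- with no reason to agree with the choice made in $\Lambda$. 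Compactness yields a subsequential limit, but only to \emph{one} of the tilings over ${\bf q}(\Lambda)$; as written you have shown the fibre meets $X_Q^\dagger$, not that the whole (non-\textbf{SiCw-L}) fibre lies inside it. This is exactly the distinction between the $2$, $6$, $12$ you are trying to establish.

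The paper's repair is the substantive content of its proof and differs by stratum. For the six asymmetric \textbf{iCw-L} colourings it translates a fixed generic tiling so that centroids of level $k\to\infty$ sit at a fixed vertex; a subsequential limit produces \emph{one} asymmetric colouring, and then the six-fold rotational symmetry and the global colour-swap symmetry of $X_Q$ (both of which preserve the closed invariant set $X_Q^\dagger$) manufacture the other five. For \textbf{ia-L} and \textbf{iw-L} tilings the paper does not use generic approximants at all: it starts from a \textbf{CHT} tiling in which the desired shift (respectively colour) is already installed on the relevant axis, and then translates the \textbf{CHT} centre off to infinity along that axis so that the pre-set shift survives in the limit. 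The same symmetry trick, applied to limits of translates putting high-level vertices at the origin, places all $12$ \textbf{CHT} tilings in $X_Q^\dagger$. If you want to keep your own approach you must explain how to steer the sequence ${\bf q}_n$ so that the forced marking along the would-be infinite line agrees with that of $\Lambda$ on ever larger balls; without that, the argument cannot distinguish $\Lambda$ from its siblings in the fibre.

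Your closing worry about the single-$Q$-orbit claim for $X_Q^b$ and $X_Q^r$ (two defect cosets from Prop.~\ref{noOrientation} versus two colour orders) is well placed; the paper is also terse here, and you should expect to have to check explicitly that a fixed colour order at the defect is tied to a fixed coset of the defect vertex.
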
 

\begin{remark} 
The images of $\xi$ of the set of singular points (non-generic points) is dense in $\overline Q$.  For instance, the triangulations with three concurrent a-lines are parameterized by $Q$ which is a dense subset of $\overline Q$, and these tilings produce the {\bf CHT} tilings (or {\bf iCa-L} tilings) described above.
Both $X_Q^{gen}$ and $X_Q^\dagger$ are of full measure in 
$X_Q$.
\end{remark}

\noindent {\bf Proof:} First, we consider generic tilings. Let $\Lambda$, where $ \xi(\Lambda) = {\bf q}$,
be any generic tiling and let
$B_R$ be the ball of radius $R$ centered on $0$. Let $\CalT(\Lambda)$ be the triangulation determined
by $\Lambda$ (with edges not displaced) and let $\CalT_R(\Lambda)$ be the part of the triangulation
that is determined by $B_R$.  

Because we are in a generic situation, to know how to shift an edge of level $k$ we need only 
that edge to appear
as an inner edge of a triangle of level $k+1$. To determine the coloring of a w-line
we need to know its level (which is finite).  So to know all this information for $\CalT_R$
we need only choose $r$ large enough so that $B_r$ contains all the appropriate triangles.

Now if generic $\Lambda'$, where $\xi({\Lambda'}) = {\bf q'}$ produces the same pattern of triangles in $B_r$ then it is 
indistinguishable from $\Lambda$ in $B_R$. In particular if  ${\bf q'}$ satisfies ${\bf q'} -{\bf q} \in 2^kQ$ for large enough $k$ then $\Lambda$ and $\Lambda'$
must agree (as tilings) on $B_R$. This proves that convergence of ${\bf q'}$ to ${\bf q}$ produces corresponding convergence in $X_Q$.

With this it is easy to see that any two generic elements of $X$ are LI. Let $\xi(\Lambda)
= {\bf q}$ and $\xi(\Lambda')= {\bf q'}$ be generic. Let ${\bf q}$ correspond to $q_1, q_2, \dots$
and ${\bf q'}$ correspond to $q'_1, q'_2, \dots$. Then we can construct the tiling sequence
$q'_1 -q_1 +\Lambda, q'_1 +q'_2 -(q_1+q_2) +\Lambda,\dots $ and it converges to $\Lambda'$.

This same argument can be used to show that the orbit closure of any tiling contains all of $X_Q^{gen}$. 
Let $\Lambda$ be any tiling with ${\bf q}=\xi(\Lambda)$ and $\Lambda' $ be a generic tiling with ${\bf q'}=\xi(\Lambda')$.
Then one simply
forms a sequence of translates of $\Lambda$ that change ${\bf q}$ into ${\bf q}'$.
The convergence of the triangulation on increasing sized patches forces convergence of
the color and we see $\Lambda'$ in the orbit closure of $\Lambda$.

\medskip

Second, we consider the {\bf iCw-L} cases, where $x:= {\bf q}+ w_2 + {\bf s_1} + 2 {\bf s_2} \in w_2 + Q$ or
$x: = {\bf q} +w_1+ {\bf s_2} + 2 {\bf s_1} \in w_1+ Q$. In these cases $x$ is a non-orientable point
and there exists a nested sequence of triangles of all levels centered on $x$. This sequence
begins either with an up triangle of level $0$ or a down triangle of level $0$. In either case
everything about the triangulation is known and the entire tiling is determined except for the 
coloring of the three w-lines through $x$. In fact all of the $8$ potential colorings 
of these three lines are realizable as tilings, as we shall soon see. 

Of these {\bf iCw-L} triangulations we have the {\bf SiCw-L} tilings in which the
colors of the diagonals of the three hexagons of which $x$ is a vertex start off the same --
all red or all blue. This arrangement at a hexagon vertex never arises in a generic tilings,
and it is for this reason that these tilings produce different LI classes than the one
that the generic tilings lie in: one `red' LI class and one `blue' LI class. As pointed out in \cite{ST} these 
{\bf SiCw-L} tilings have the  amazing property that they are completely determined once the three hexagons around $x$ have been decided\footnote{It is also pointed out in \cite{BG} that the {\bf SiCw-L} tilings do not arise in the substitution
tiling process originally put forward in Taylor's paper. However, they do arise as legal tilings from the
matching rule perspective, though they could be trivially removed by adding in a third rule to forbid
them. A similar situation has been shown to occur with the Robinson tilings for which there is a matching rule
and also a substitution scheme that result in a hull and its minimal component \cite{GJS}. As pointed out in \cite{ST}, this is different from tilings like the Penrose rhombic tiling where the matching
rules determine the minimal hull.}. The form of the points $x$ with no orientation shows that there are just
two $Q$ orbits of them, one for each of the two non-trivial cosets of $Q$ in $P$. 

What about the other $6$ color arrangements around such a point $x$? Here we can argue that they
all exist in the following way. Since in any triangulation there are tile centroids of any desired level $k$,
we can start with any generic $\Lambda$ and form a sequence of translates of it that have 
centroids of ever increasing level at $0$. The sequence has at least one limit point 
and this is an {\bf iCw-L} tiling. 
Since each element of the sequence has a unique coloring and coloring in generic tiles
is locally determined by local conditions, there must be a subsequence of these tilings that
converges to one of some particular coloring. This must produce a coloring of diameters
with two diameters of one color and one of the other color since we are using only generic
tilings in the sequence. Now the rotational three-fold symmetry and the color symmetry of
$X_Q$ shows that all $6$ possibilities for the coloring will exist. This also shows that 
all these tilings are in the orbit closure of $X_Q^{gen}$.

Third, the {\bf CHT/iCa-L} triangulations have the form $\CalT(q)$ where $q\in Q$. They have three concurrent a-lines and three concurrent w-lines at $q$
and leave the central tile completely undetermined. This tile can be placed in anyway we wish, and
this fixes the entire tiling. There are a total of $12$ ways to place this missing tile ($6$ for each parity),
whence $\xi$ is $12:1$ over $q$. 

Finally, apart from the {\bf iCw-L} and {\bf CHT/iCa-L} tilings, the remaining singular values of {\bf q} correspond to the 
{\bf ia-L} and {\bf iw-L} triangulations where there is either a single infinite level $a$-line or a 
single infinite level $w$-line, 
\S\ref{Coloring for the exceptional tilings}.  Fortunately these two things cannot happen
at the same time, see Lemma~\ref{infinite a and w lines}. That means that there
is only one line open to question and there is only one line on which either the shift
or color is not determined.
In the {\bf ia-L} case there is an $a$-line for which edge shifting is un-defined and we wish to show
that all the two potentially available shifts lead to valid tilings. Likewise in the {\bf iw-L} case there is a
$w$-line to which no color can be assigned, and we wish to prove that both coloring options are viable.

Suppose one starts with a {\bf CHT} tiling $\Lambda$ centered at $0$. If one forms
a sequence $\{q_1+ \cdots + q_k + \Lambda\}$ of {\bf CHT} tilings 
and if $\{q_1+ \cdots + q_k\} $ converges to a point of on the line $\overline{\ZZ_2}a_1$ that is not
in $Q$ then the point of {\bf CHT} concurrence has vanished and one is left only with the $x$-axis as an single infinite level a-line, and it will have the shifting
induced by the original shifting along the $x$-axis in $\Lambda$ (which can be 
either of the two potential possibilities). Of course one can do this
in any of the $a$ directions. 
A similar type of procedure works to produce all of the {\bf iw-L} tilings.
This concludes the proof of the theorem. \qed

\section{Tilings as model sets}\label{model sets}

In this section we consider Taylor--Socolar tilings, and in particular the parity sets of 
such tilings, from the point of view of model sets. There are a number of advantages
to establishing that point sets are model sets since there
is a very extensive theory for them, including fundamental theorems regarding their intricate
relationship to their autocorrelation
measures and their pure
point diffractiveness \cite{MS, BM, BLM}. In fact there are various ways in which one can establish that the vertices
or the tile centres of a Taylor--Socolar tiling always form a model set. We have already pointed this out
in Cor.~\ref{ms1}. However, the set-up that we have created makes it easy to see the model set construction
rather explicitly, and that is the purpose of this section. 

The basic pre-requisite for the cut and project formalism is a
cut and project scheme. Most often, especially in mathematical physics, the cut and project
schemes have real spaces (i.e. spaces of the form $\RR^n$) as embedding spaces and internal spaces. But the theory of model sets is really part of the theory of locally compact Abelian groups 
\cite{RVM}.
In the case of limit-periodic sets, some sort of ``adic'' space is the natural ingredient for the internal space.
In our case the internal space is $\overline P$, see \cite{LM1}.

\subsection{The cut and project scheme}\label{modelset} \
Form the direct product of $\RR^2$ and $\overline P$. The subset
$\CalP =  \{(x,i(x))\in \RR^2 \times \overline P : x\in P \}$
is a lattice in $ \RR^2 \times \overline P$ (that is, $\CalP$ is discrete and
$((\RR^2 \times \overline P)/ \CalP$ is compact) with the properties that the projection mappings 
\begin{equation} \label{cpScheme}
   \begin{array}{ccccc}
      \RR^2 & \stackrel{\pi_{1}}{\longleftarrow} & 
        \RR^2 \times \overline P & \stackrel{\pi_{2}} 
      {\longrightarrow} & \overline P   \\ 
      &&  \cup \\  
      P &\stackrel{\simeq}{\longleftrightarrow} & \CalP   
   \end{array} 
\end{equation}
satisfy  $\pi_1|_{\CalP}$ is injective and $\pi_2(\CalP)$ is dense in $\overline P$.
The set up of \eqref{cpScheme} is called a {\bf cut and project scheme}. 

Then $P = \pi_1 (\CalP) \subset \RR^2$ and the ``star mapping'' 
$(\cdot)^\star \! : \, P \longrightarrow \overline P$
defined by $\pi_2 \circ (\pi_1|_{\CalP})^{-1}$ is none other than the embedding $i$
defined above. 

Let $W\subset \overline P$ which satisfies $W^\circ \subset W \subset \overline{W^\circ} = \overline W$ 
with $\overline W$ compact,
we define 
\[\curlywedge(W) := \{ x\in P \,:\, x^\star \in W\} \,.\]
This is the {\bf model set} defined by the {\bf window} $W$.
Most often we wish to have the additional condition that the boundary $\partial W :=\overline W\backslash W^\circ$
of $W$ has Haar measure $0$ in $\overline P$. 
In this case we call $\curlywedge(W)$ a {\bf regular model set}.

As an illustration of how the cut and project scheme is used to define the model sets,
we give here a model set interpretation for the sets $W^\uparrow(\{\bf q\})$ and
$W^\downarrow(\{\bf q\})$ of Prop.~\ref{Wupdownarrows}:

\begin{eqnarray}\label{triangleCenters}
\curlywedge^\uparrow(\{\bf q\}) &:=& \{ x \in P : x^\star \in W^\uparrow(\{\bf q\})\}\\
\curlywedge^\downarrow(\{\bf q\}) &:=& \{ x \in P : x^\star \in W^\downarrow(\{\bf q\}) \}\,. \nonumber
\end{eqnarray}

The windows $W^\uparrow({\bf q})$ and $W^\downarrow({\bf q})$ are compact and the closures
of their interiors, so these two sets are pure point diffractive model sets, and clearly
they are basically the points of $P\backslash Q$ which have orientation up and down respectively.
In the case of values of ${\bf q}$ treated in Prop.~\ref{noOrientation} there will be one point without
orientation. It is on the common boundary of $W^\uparrow({\bf q})$ and $W^\downarrow({\bf q})$.

However, our intention here is not to interpret features of the triangulation in terms of model
sets (which is more or less obvious) but to understand parity, which is a more subtle feature depending
on edge-shifting and color, in terms of model sets. In this paper we will need only to deal with
model sets lying in $Q$, and for this it is useful to restrict the cut and project scheme above to the 
lattice
$\CalQ =  \{(x,i(x))\in \RR^2 \times \overline Q : x\in Q \}$
in $ \RR^2 \times \overline Q$ :

\begin{equation} \label{cpSchemeQ}
   \begin{array}{ccccc}
      \RR^2 & \stackrel{\pi_{1}}{\longleftarrow} & 
        \RR^2 \times \overline Q & \stackrel{\pi_{2}} 
      {\longrightarrow} & \overline Q   \\ 
      &&  \cup \\  
      Q &\stackrel{\simeq}{\longleftrightarrow} & \CalQ \,.  
   \end{array} 
\end{equation}
Of course we shall not be looking for just one window
and one model set, but rather two windows, one for each of the two choices of parity. 

\medskip
\subsection{Parity in terms of model sets: the generic case} \label{modelset-generic}\
Each tiling in $X_Q$ is composed of hexagons centered at points of $Q$ that
are of one of the two types shown in Fig.~\ref{two-basic-tiles}. We call them white 
or gray according to the coloring shown in the figure. At the beginning we shall
work only with the generic cases, since for them the tiling is completely represented
by its value in $\overline Q$.

Let $\Lambda$ be a generic tiling for which $\xi(\Lambda) = {\bf q} \in \overline{Q}$. 
We define $Q({\bf q})^{wh}$ (resp. $Q({\bf q})^{gr}$) to be the set of points of $Q$ whose corresponding tiles in $\Lambda$ are white (resp. gray), so we have a partition 
\[ Q = Q({\bf q})^{wh}  \cup Q({\bf q})^{gr} \,.\]

We shall show that each of $Q({\bf q})^{wh}$ and $Q({\bf q})^{gr}$ is a union of  a countable number of $2^kQ$-cosets (for various $k$) of $Q$. 
If this is so then since the 
closure of a coset $x + 2^kQ$ is $x + 2^k\overline{Q}$
which is clopen in $\overline Q$, we see that $\overline{Q({\bf q})^{wh}}$ contains the open
set $U^{wh}$ consisting of the union of all the clopen sets coming from the closures of the
cosets of $Q({\bf q})^{wh} $,
and $\overline{Q({\bf q})^{wh}}$ is the closure of $U^{wh}$. 
Similarly $\overline{Q({\bf q})^{gr}}$ contains an open set $U^{gr}$. We note that 
$U^{wh}$ and $U^{gr}$  are disjoint
since they are the unions of disjoint cosets, and their union contains all of $Q$.  

We also point out that $U^{wh}$ is the interior of 
$\overline{Q({\bf q})^{wh}}$ since any open set in $\overline Q$ is a union of clopen sets
of the form $x + 2^k\overline{Q}$ with $x\in Q$ (they are a basis for the topology of $\overline{Q}$)
and each of these is either in $U^{wh}$ or $U^{gr}$. But no point of $U^{gr}$ is a limit point of $U^{wh}$
and so $U^{gr} \cap \overline{Q({\bf q})^{wh}} = \emptyset$. Similarly
$U^{gr}$ is the interior of $\overline{Q({\bf q})^{gr}}$.

Evidently $\overline{Q({\bf q})^{wh}} \cap \overline{Q({\bf q})^{gr}}$ is a closed set with no interior,
since $U^{wh}$ and $U^{gr}$ are disjoint.
Thus $\overline{Q({\bf q})^{wh}} \cap \overline{Q({\bf q})^{gr}}$ lies in the boundaries of
each set and contains no points of $Q$. Each of the sets $\overline{Q({\bf q})^{wh}}$
and $\overline{Q({\bf q})^{gr}}$ is compact and each is the closure of its interior. The boundaries
of the two sets are both of measure $0$ since $U^{wh}$ and $U^{gr}$ can account for the full
measure of $\overline Q$. Finally, $\overline Q = \curlywedge(\overline{Q({\bf q})^{wh}}) \cup \curlywedge(\overline{Q({\bf q})^{gr}}) $.

\begin{theorem}\label{whiteGreyModelSets}
Let $\Lambda$ be a generic tiling for which $\xi(\Lambda) = {\bf q} \in \overline{Q}$, where 
${\bf q} = (q_1, q_1+q_2, \dots,$ $ q_1+ \cdots + q_k, \dots) \in \overline{Q}$. 
We have the 
model-set decomposition for white and gray points of the hexagon centers of $\Lambda$:
\begin{eqnarray}\label{genericModelSetDecomp}
Q({\bf q})^{wh} &=& \curlywedge(\overline{Q({\bf q})^{wh}})\,, \\ \nonumber
Q({\bf q})^{gr} &=& \curlywedge(\overline{Q({\bf q})^{gr}})\, ,\\ \nonumber
\overline Q &=& \curlywedge(\overline{Q({\bf q})^{wh}}) \cup \curlywedge(\overline{Q({\bf q})^{gr}}) \, ,\nonumber
\end{eqnarray}
where $\overline{Q({\bf q})^{wh}}$ is the closure of the union of the clopen sets in $\overline{Q}$. 
Thus these sets are regular model sets.
\end{theorem}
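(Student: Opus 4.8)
The plan is to reduce everything to a single structural claim: for a fixed generic ${\bf q}$, each of the parity classes $Q({\bf q})^{wh}$ and $Q({\bf q})^{gr}$ is a union of (countably many) cosets of the form $x + 2^kQ$. Once this is in hand, the three displayed identities and the regularity of the windows follow from the topological bookkeeping already assembled in the paragraphs preceding the theorem: the closures $\overline{Q({\bf q})^{wh}}$ and $\overline{Q({\bf q})^{gr}}$ are then the closures of the disjoint open sets $U^{wh}, U^{gr}$ built from the clopen sets $x + 2^k\overline{Q}$, and these open sets are exactly the interiors of the two closures. So I would isolate the coset claim and treat it as the crux.

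To prove the coset claim I would combine two ingredients. First, in the generic case the edge-shifting and coloring --- and hence the parity of the hexagon at any $p \in Q$ --- are determined by a finite patch of $\CalT({\bf q})$ around $p$; this is the content of the local-determination remark following the coloring argument (the stripe comes from the unique highest-level short diameter of the hexagon by Lemma~\ref{generic-a and three short diameters}, its shift needs only the enclosing triangle one level higher, and under generic-$w$ the forcing edge that colors each diameter lies a bounded distance away). Let $m = m(p)$ be the maximal level of any triangle, edge, centroid, or $w$-line entering this determination. Second, the level-$\le m$ part of $\CalT({\bf q})$ is exactly invariant under translation by $2^mQ$: for $j \le m$ the level-$j$ vertices form the coset $q_1+\cdots+q_j+2^jQ$ and the level-$j$ centroids lie in $S^\uparrow_{j+1}\cup S^\downarrow_{j+1}$, all cosets of $2^jQ \supseteq 2^mQ$, and the triangulation at each level is determined by its vertex set. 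Since edge-shifting and coloring are defined purely geometrically, the whole tile-marking construction is translation-equivariant; hence for every $t \in 2^mQ$ the marked hexagon at $p+t$ is the $t$-translate of the one at $p$, so $p+t$ has the same parity as $p$. Thus $p + 2^mQ$ lies entirely in one parity class, which is the coset claim.

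With the coset claim in place I would run the pre-theorem analysis to finish. The identity $Q({\bf q})^{wh} = \curlywedge(\overline{Q({\bf q})^{wh}})$ reads $Q({\bf q})^{wh} = Q \cap \overline{Q({\bf q})^{wh}}$ (since $x^\star = i(x)$ identifies $Q$ inside $\overline{Q}$): the inclusion $\subseteq$ is immediate, and for $\supseteq$ any $x \in Q$ in the gray class sits in $U^{gr}$, which is disjoint from $\overline{Q({\bf q})^{wh}}$, so no gray point lies in $\overline{Q({\bf q})^{wh}}$. The same reasoning handles the gray identity, and $\overline{Q} = \curlywedge(\overline{Q({\bf q})^{wh}}) \cup \curlywedge(\overline{Q({\bf q})^{gr}})$ because $U^{wh} \cup U^{gr} \supseteq Q$ is dense and the two closures cover $\overline{Q}$.

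Finally, for regularity I must show the windows' common boundary $\partial = \overline{Q} \setminus (U^{wh} \cup U^{gr})$ has Haar measure $0$. The geometric meaning of a point of $\partial$ is that the parity there fails to be locally determined at every scale, so $\partial$ is carried by the singular configurations, which have measure $0$ (cf.\ Lemma~\ref{singularMeasure0}); equivalently, by unique ergodicity (Cor.~\ref{ms1}) the white and gray densities exist and sum to the full density of $Q$, forcing $\mu(U^{wh}) + \mu(U^{gr}) = \mu(\overline{Q})$. I expect the \emph{main obstacle} to be exactly this two-sided bookkeeping: making rigorous both that the finite level $m(p)$ exists and genuinely controls the determination (the content of the generic local-determination remark) and that the leftover boundary is measure-null rather than merely nowhere dense.
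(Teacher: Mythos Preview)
Your reduction to the coset claim is exactly right, and the pre-theorem paragraphs do carry the rest once that claim is in hand. However, your proof of the coset claim takes a genuinely different route from the paper. The paper argues constructively: it introduces sets $U_k = \bigcup_{w} (q_1+\cdots+q_k + 2^kQ + \ZZ\,3w)$ and $V_k = U_k \setminus U_{k+1}$, shows under generic-$w$ that $Q = \bigcup_k V_k$, and then splits each $V_k$ by forced colour into pieces $V_k^c(w,n)$ that are unions of $2^{k+1}Q$-cosets. It does the parallel thing for the shift data via sets $L_l(w')$, each a union of $2^{l+1}Q$-cosets, and finishes by observing that the intersections $V_k^c(w,n)\cap L_l(w')$ exhibit each parity class as a union of $2^{\max(k,l)+1}Q$-cosets. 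Your argument replaces all of this with a single conceptual step: parity at $p$ depends only on the level-$\le m(p)$ triangulation (local determination), and that triangulation is literally $2^{m(p)}Q$-periodic. This is correct and considerably shorter; the price is that you get no explicit handle on which cosets occur or at what scale, whereas the paper's decomposition ties the coset scale directly to the levels of the relevant $a$- and $w$-lines through $p$. (One small care point: take $m(p)$ strictly larger than every level you use, so that ``has level exactly $\ell$'' is itself a $2^{m(p)}Q$-periodic statement.)

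On the boundary measure, your option (1) is the right idea, but mind the identification: Lemma~\ref{singularMeasure0} is about singular tiling \emph{parameters}, whereas $\partial$ sits in the \emph{window} for a fixed generic ${\bf q}$. The bridge is that ${\bf z}\in\partial$ forces ${\bf q}-{\bf z}$ to admit an infinite-level $a$- or $w$-line through $0$, i.e.\ ${\bf q}-{\bf z}\in \bigcup_a \overline{\ZZ_2}\,a \cup \bigcup_w \overline{\ZZ_2}\,3w$, a finite union of one-parameter subgroups of Haar measure $0$; this gives $\mu(\partial)=0$. Your option (2) via unique ergodicity is riskier: the density-equals-measure formula for model sets is normally stated only for regular windows, so invoking it to \emph{prove} regularity is circular.
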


\noindent {\bf Proof:} 
We have to show
that $Q({\bf q})^{wh}$ and $Q({\bf q})^{gr}$ are each unions of  a countable number of $2^kQ$-cosets 
(for various $k$) of $Q$. 
There are two components that
enter into the white/gray coloring: the diameter coloring of the tiles and the edge shifting. The
generic condition guarantees that both coloring and shifting are completely unambiguous. Our argument deals
with coloring first, and shifting second. Finally both parts are brought together.

Let $\Omega:= \{\pm w_1, \pm w_2, \pm(w_2-w_1)\}$ and
$\Omega^+:= \{w_1, w_2, w_2-w_1\}$.
For $w\in \Omega$ and $k=1,2, \dots$, define
\[ U_k := \bigcup_{w\in \Omega} q_1 + \cdots q_k +2^k Q + \ZZ 3w\,. \]
Recall that for $w\in\Omega$, $3w \in Q\backslash 2Q$.  The points
of $q_1 + \cdots q_k +2^k Q$ are the vertices of the level $k$ triangles
and the sets $U_k$ are composed of the points of $Q$ on the w-lines that pass through
such vertices. 
We have $Q = U_1 \supset U_2 \supset U_3 \supset \cdots$.

A point of $Q$ may be a vertex of many levels of triangles, but we wish to 
look at the highest level vertex that lies on a given w-line. Thus we define 
$V_k := U_k\backslash U_{k+1}$, $k=1,2,\dots $. The sets $V_k$ are mutually
disjoint. See Fig.~\ref{V_i-forModelSet-J}.
\begin{figure}
\centering
\includegraphics[width=8cm]{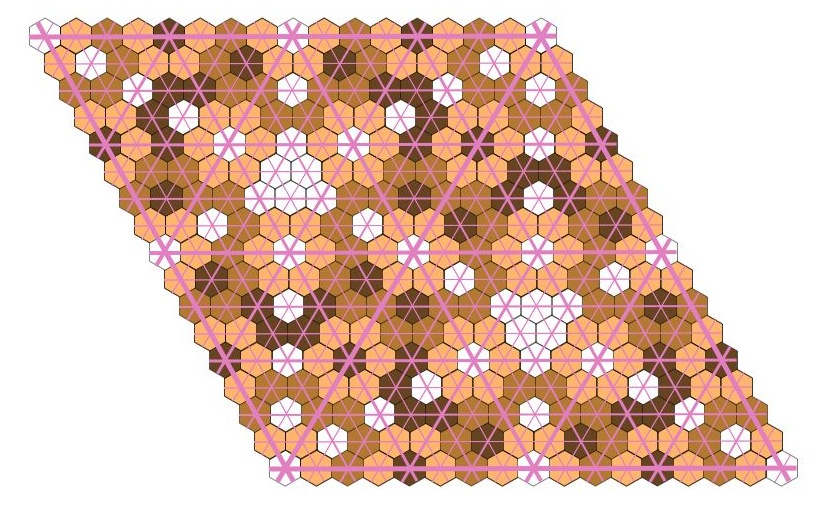}
\caption{The tiles associated with points of $V_1,V_2,V_3$ are indicated by increasingly
dark shades.}
\label{V_i-forModelSet-J}
\end{figure}

\smallskip
The sets $U_k$ are made up of various elements $q_1 + \cdots q_k +2^k u + 3nw$
where $u\in Q$ and $n \in \ZZ$. However we can restrict $n$ in the range
$0\le n \le 2^{k-1}$ since $3\, 2^k w \in 2^k Q$ and 
\[ q_1 + \cdots q_k +2^k u + 3 \,2^{k-1} w = q_1 + \cdots q_k +2^k u + 3 \, 2^{k } w +
3 \,2^{k-1} (-w)
\, ,\]
which changes $w$ to $-w$ at the expense of a translation in $2^k Q$. 
To make things unique we shall assume that $w \in \Omega^+$ in the extreme
cases when $n=0$ or $n= 2^{k-1}$. 

\begin{figure}
\centering
\includegraphics[width=8cm]{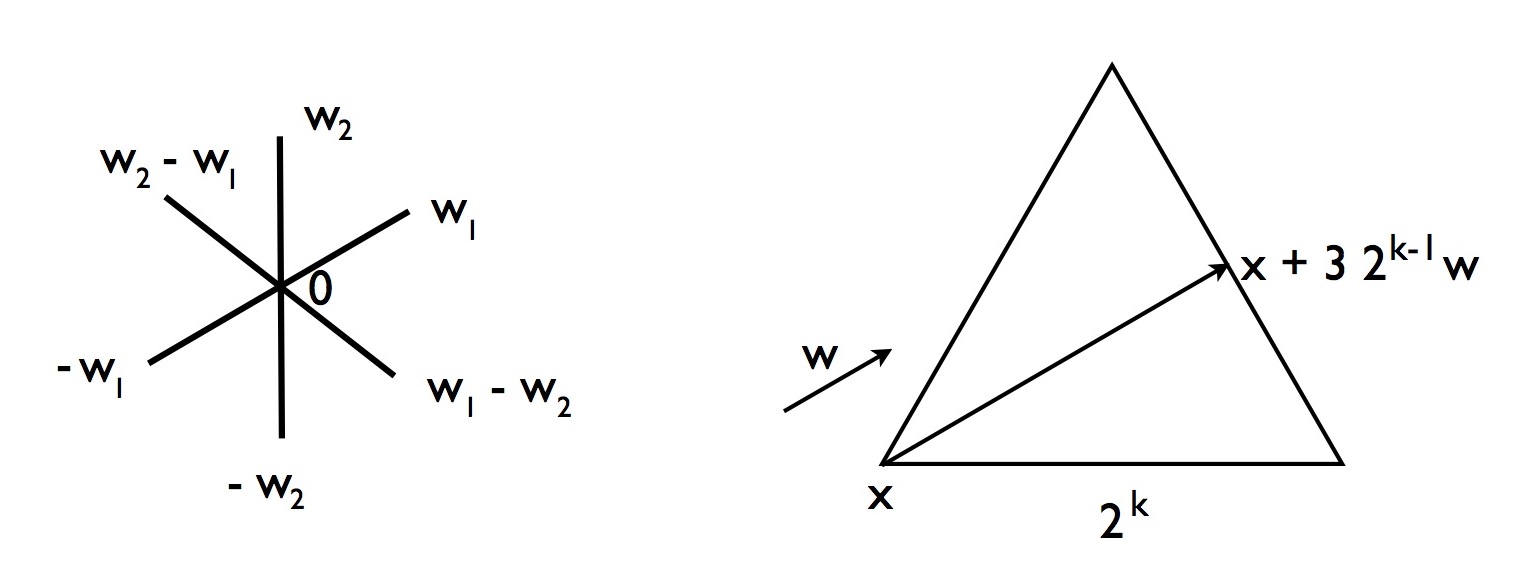}
\caption{$\Omega$ and a line through a vertex $x$ in the direction $w$
meeting the opposite edge at $3\, 2^{k-1} w$.}
\label{Omega}
\end{figure}

\begin{figure}
\centering
\includegraphics[width=8cm]{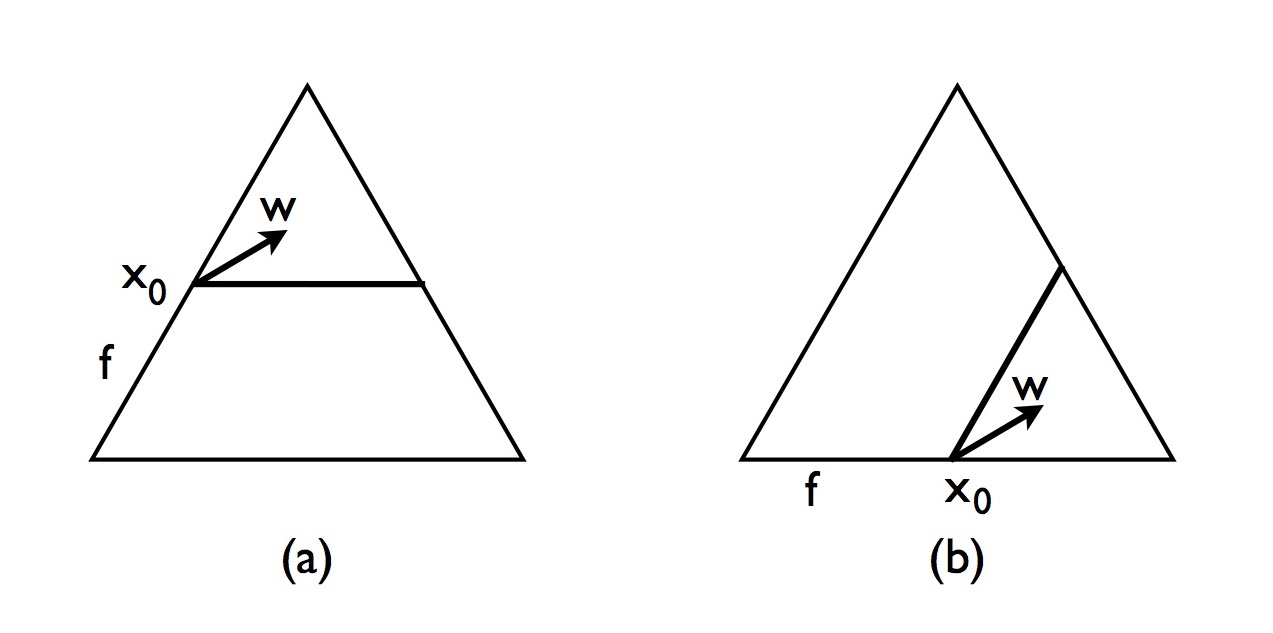}
\caption{Showing $x_0$ as a midpoint of an edge $f$ of
a triangle of level $2^{k+1}$ and the direction of $w$ from it. }
\label{VkFigure}
\end{figure}

We claim that under the condition generic-w we have $Q= \bigcup_{k=1}^\infty V_k$. The only way
that $x \in Q$ can fail to be in some $V_k$ is that $x \in U_k$ for all $k$.  Then $x$ is
on w-lines through vertices of arbitrarily high level triangles. At least
one $w\in\Omega$ occurs infinitely often in this. Fix such a $w$. The vertex of 
a level $k$ triangle is always  the vertex of $6$ such triangles around that
vertex. So whenever the w-line
passes through a vertex of a level $k$ triangle it also passes into the interior of one
of the level $k$ triangles of which this is a vertex and then through the centroid of this  
level $k$ triangle. Thus the w-line $x + \ZZ w$ has centroids of arbitrary level on it, violating
generic-w. 

Let $x = q_1 + \cdots q_k +2^k u + 3 n w = x_0 + 3nw \in V_k$ for some $k$, where $n$
satisfies our conventions noted above on the values it may take. Then by the definition of
$V_k$, $x_0$ is not the vertex of any edge of a triangle $T$ of side length $2^{k+1}$ and
so $x_0$ is the mid-point of an edge $f$ of such a $T$. In particular $x_0 \notin V_k$.
Fig.~\ref{Omega} and Fig.~\ref{VkFigure} indicate,
up to orientation, what all this looks like. The edge $f$ is on the highest level line through $x_0$
and so determines the stripe of the hexagon at $x_0$ and, more importantly, the coloring
of the w-line that we are studying. The coloring at $x_0$ in the direction $w$ starts
red in the case of Fig.~\ref{VkFigure}(a) and blue in the case of Fig.~\ref{VkFigure}(b). 
The color then alternates along the line in the manner illustrated in Fig.~\ref{coloring2}.

The color pattern determined here repeats modulo $2^{k+1}Q$ (not $2^k Q$), so 
$V_k$ splits into subsets, each of which is a union of cosets of $2^{k+1}Q$ in $Q$,
\[V_k = V_k^r  \cup V_k^b  = \bigcup_{w\in \Omega} \bigcup_{n =0}^{2^k -1} V^r_k(w,n) \cup V^b_k(w,n)\, ,\]
corresponding to the red-blue configurations 
and corresponding also to which $w \in \Omega$ is involved. Here $V^r_k(w,n)$ is the set of
points $q_1 + \cdots q_k +2^k u + 3nw \in V_k$ which {\em start} in the direction $w$ with the color
red, where $0 \le n < 2^{k-1}$ with the boundary conditions on $n$ established
as above. The situation with $V^b_k(w,n)$ is the same
except red is replaced by blue. 
Notice that each $V^r_k(w,n)$(or $V^b_k(w,n)$) is a union of $2^{k+1}Q$-cosets for various values of $k$.

\medskip
Now we need to look at the other aspect to determining the white and gray tiles, namely edge shifting.
For this we assume the condition generic-a. Every $x\in Q$ lies on an edge of level $k$ for some $k$.
Recall that this means that $x$ is on the edge of a triangle of level $k$ but not one of level $k+1$.
The condition generic-a says that such an edge must exist for $x$. Such an edge
must then appear as the inner edge $e$ of a corner triangle of a triangle $T$ of level $k+1$. The
edge $e$ then shifts towards the centroid of $T$, say in the direction $w\in \Omega$, carrying
corresponding diameters in along with it.

\begin{figure}
\centering
\includegraphics[width=8cm]{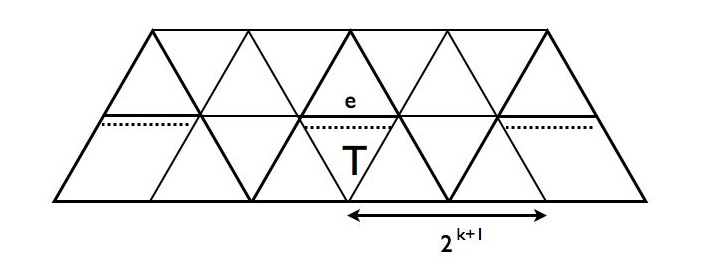}
\caption{Showing a level $k$ edge inside a triangle $T$ of level $k+1$
and its corresponding shift (dotted line). Note how edge shifting for edges of level $k$ repeats modulo $2^{k+1}$. }
\label{LkShifts-J}
\end{figure}

Let
\[ L_l(w) := \{ x \in Q \, :\, x \,\mbox{lies on an edge of level $l$ which shifts in the direction
$w$ }\} \,. \]
Then $Q = \bigcup_{l=1}^\infty \bigcup_{w\in \Omega} L_l(w)$. As one sees from Fig.~\ref{LkShifts-J},
$L_l(w) $ is a union of $2^{l+1}Q$-cosets (but not $2^lQ$-cosets). 

We now put these two types of information together. We display the results in the form
of two tables: any $x\in Q$ satisfies
\begin{equation}\label{colorAndShift}
x \in V_k^c(w,n)  \cap L_l(w')  
\end{equation}
for some $c\in\{r,b\}$, $k,l \ge 1$, $w \in \Omega$, $w'\in \Omega$. 

Notice that $V_k^c(w,n)  \cap L_l(w')$ is a union of $2^mQ$-cosets for $m=\max\{k+1,l+1\}$.
So we finally obtain that each of $Q^{wh}$ and $Q^{gr}$ is the union of such cosets. This is what we wanted
to show and concludes the proof of the theorem.

\subsection{Parity in terms of model sets: the non-generic case}\
For non-generic sets there are two situations to consider. First of all, let us consider
tilings of the minimal hull. Any such tiling $\Lambda'$ can be viewed as the limit of translates
of a generic tiling, $\Lambda$.  Let $\xi(\Lambda) = {\bf q}$ and let $W^{wh}$ and $W^{gr}$ denote
the two closed windows that define the parity point sets of the tile centers of $\Lambda$. Translation of
$t+ \Lambda$, $t\in Q$,  amounts to translation by $t^\star$ of $W^{wh}$ and $W^{gr}$. 
This is in fact just translation by $t$ but with $t$ seen as an element of $\overline Q$. Translation
does not affect the type of the tiling ({\bf iw-L, iCa-L}, etc.). 

Convergence of a sequence of translates
$t_1+\Lambda, t_1+t_2 +\Lambda, \dots $ to $\Lambda'$ in the hull topology implies $Q$-adic convergence of
$t_1+t_2 + \cdots$, say to ${\bf t} \in \overline Q$. The translated sets then also converge to  
${\bf t} +W^{c}$, $c= \{wh, gr\}$. However, if ${\bf t} \notin Q$ then we will not necessarily have
$\Lambda' = \curlywedge({\bf t} +W^{c})$. 

Here is what happens. If $u \in \Lambda'$ then for large enough $n$, $u \in t_1+t_2 + \dots +t_n + \Lambda $
and $u^\star \in t_1+t_2 + \dots +t_n + W^{c}$. Thus $u^\star \in {\bf t} +W^{c}$ and we have
that $\Lambda' \subset \curlywedge({\bf t} +W^{c})$.  On the other hand we have
$\Lambda' \supset \curlywedge({\bf t} +(W^{c})^\circ)$. For suppose that 
$x\in \curlywedge({\bf t} +(W^{c})^\circ))$.
Then $x^* \in ({\bf t} +(W^{c})^\circ)\cap Q^*$ and 
using the convergence $t_1+t_2 + \dots \to {\bf t}$ 
we see that for large $n$, $x^* \in (t_1+t_2 + \dots + t_n  +(W^{c})^\circ))\cap Q^*$. Thus for large $n$,
$x\in \curlywedge(t_1+t_2 + \dots + t_n  + W^{c}) = t_1+t_2 + \dots + t_n  +\Lambda$, and so $x\in \Lambda'$.
We conclude that $\Lambda' = \curlywedge(Z)$ for some window $Z$
satisfying ${\bf t} + (W^{c})^\circ \subset Z \subset {\bf t} + W^{c}$. This shows that
$\Lambda'$ is a model set since $Z$ lies between its interior and the (compact) closure
of its interior. Also $\partial Z \subset \partial({\bf t} + W^{c})$ has Haar measure $0$ as it was explained in the beginning of Section~\ref{modelset-generic}.
\medskip

The remaining cases are the {\bf SiCw-L} tilings. 
Let $\Lambda$ be such a tiling, which we may assume to be associated with $\xi(\Lambda)=0 \in Q$. Comparing the {\bf SiCw-L} tiling $\Lambda$ with an {\bf iCw-L} tiling $\Gamma$ for which $\xi(\Gamma) = 0$,
we notice that the only difference between $\Lambda$ and $\Gamma$ is on the lines through $0$ in the $w$-directions where $w \in \Omega$. The total index is introduced in \cite{LM1}. 
Notice that it is enough to compute that the total index of the set of all points off these $w$-lines is $1$ (see cite[LM1]). Because the set of points off the lines of $w$-directions is the disjoint union of cosets $V_k$ (we have seen this earlier),
we only need to show that the total index of $\cup_{1}^{\infty} V_k$ is $1$, i.e.
\[ \sum_{k=1}^{\infty} c(V_k) = 1 .\]
Following the construction of $V_k$, $k \ge 1$, already discussed above, we compute the coset index of $V_k$. Within each $V_k$ we need to divide the point set $V_k$ into two sets. One is the point set whose points are completely within the $(k+1)$-th level triangles and the other is the point set whose points are lying on the lines of the $(k+1)$-th level triangles.
We note that 
\begin{eqnarray*}
c(V_1) & = & \frac{6}{4 \cdot 2^2} \\
 c(V_2) & = & \frac{(2^1 -1) \cdot 2\cdot 3 \cdot 2}{4 \cdot 2^2 \cdot 2^2} + \frac{6}{4 \cdot 2^2 \cdot 2^2} \\
 c(V_3) & = & \frac{(2^2 -1)\cdot 2 \cdot 3 \cdot 2}{4 \cdot 2^2 \cdot 2^2 \cdot 2^2} + \frac{6}{4 \cdot 2^2 \cdot 2^2 \cdot 2^2} \\
 && \vdots \\
 c(V_k) & = & \frac{(2^{k-1} -1)\cdot 2\cdot 3 \cdot 2}{4 \cdot (2^2)^k} + \frac{6}{4 \cdot (2^2)^k} .
\end{eqnarray*}
So 
\begin{eqnarray*} 
 \sum_{k=1}^{\infty} c(V_k)  &=& \frac{6}{4} \left( \frac{1}{2^2} + \frac{1}{2^2 \cdot 2^2} + \cdots +   \frac{1}{(2^2)^k} \cdots \right) + 3 \left( \frac{2}{(2^2)^2} + \frac{2^2}{(2^2)^3} + \cdots + \frac{2^{k-1}}{(2^2)^k} + \cdots \right) \\
 && ~~~~~ - 3 \left( \frac{1}{(2^2)^2} + \frac{1}{(2^2)^3} + \cdots +  \frac{1}{(2^2)^k} + \cdots \right) \\
&=& \frac{6}{4}\left( \frac{1/4}{1 - 1/4}\right) + 3 \left( \frac{1/8}{1 - 1/2} \right) - 3 \left( \frac{1/16}{1-1/4}\right) \\
&=& 1.
\end{eqnarray*}

\section{A formula for the parity}\label{Formula}

In this section we develop formulae for tilings which determine the parity of each tile of a tiling
from the coordinates of the center of that tile. 
We begin with the formula for parity derived in \cite{ST} for the 
{\bf CHT} tilings centered at $(0,0)$. These correspond to the triangulation for 
${\bf q} =0$. The parity formula for a tile is based on the coordinates of the center
of the hexagonal tile. Due to the non-uniqueness of the {\bf CHT} tilings along the $6$-rays
at angles $2\pi k/6$ emanating from the origin, the basic formula is valid only off these rays. 
Later we show how to adapt this formula to arbitrary ${\bf q}$.

The parity of a tile depends on the relationship of its main stripe
to the diameter at right-angles to this stripe. In terms of the triangulation, the parity
of a tile depends on two things: the way the triangle edge on which the stripe is
located is shifted and the order of the two colors of the color line as it passes
through the tile: red-blue or blue-red. Changing the shift or the color order changes
the parity, changing them both retains the parity. Thus the parity can be expressed as the modulo $2$ sum
of two binary, i.e.  $\{0,1\}$, variables representing the shift  and the color order. Which parity belongs
to which type of tile is an arbitrary decision.  In our case we shall make it so that the white tile
has parity $1$ and the gray tile has parity $0$.

We introduce here a special coordinate system for the plane (which we really only use
for elements of $Q$). We take three axes through         
$(0,0)$, in the directions of $a_1, a_2, -(a_1+a_2)$ with these three vectors as unit vectors
along each. For convenience we define $a_3:= -(a_1 +a_2)$. Each $x\in Q$ is given the coordinates 
$(x_1,x_2,x_3)$ where $x_1$ is the $a_2$
coordinate where the line parallel to $a_1$ through $x$ meets the $a_2$-axis. Similarly
$x_2$ is the $a_3$
coordinate where the line parallel to $a_2$ through $x$ meets the $a_3$-axis, and
$x_3$ is the $a_1$
coordinate where the line parallel to $a_3$ through $x$ meets the $a_1$-axis. This is shown in
Fig.~\ref{coordinates}. Notice that $x_1 +x_2 +x_3 =0$. We call these coordinates the {\bf triple
coordinates}.

The redundant three label coordinate system that we use has the 
advantage that one can just cycle around the coordinates to deal with each of the three $w$-directions.
Counterclockwise rotation through $2\pi/3$ amounts to replacing $(x_1,x_2,x_3)$
by $(x_3,x_1,x_2)$.

We note that $x\in Q$ if and only if $x_1,x_2,x_3 \in \ZZ$. Let $\nu:\ZZ\longrightarrow \ZZ$
be the $2$-adic valuation defined by $\nu(z) = k$ if $2^k|| z$, i.e. if $2^k$ divides $z$ but $2^{k+1}$ does not divide $z$.
We define $\nu(0) = \infty$. Finally we define $D(z) = 2^{\nu(z)}$. Note that $D(-z)=D(z)$.
When levels appear, they are related to $\log_2(D(z)$. 

\smallskip
Now for $x\in Q$ we note that, except for $x=0$, exactly two of $D(x_1), D(x_2), D(x_3)$
are equal and the remaining one is larger. This is a consequence of $x_1 +x_2 +x_3 =0$.
\begin{figure}
\centering
\includegraphics[width=6cm]{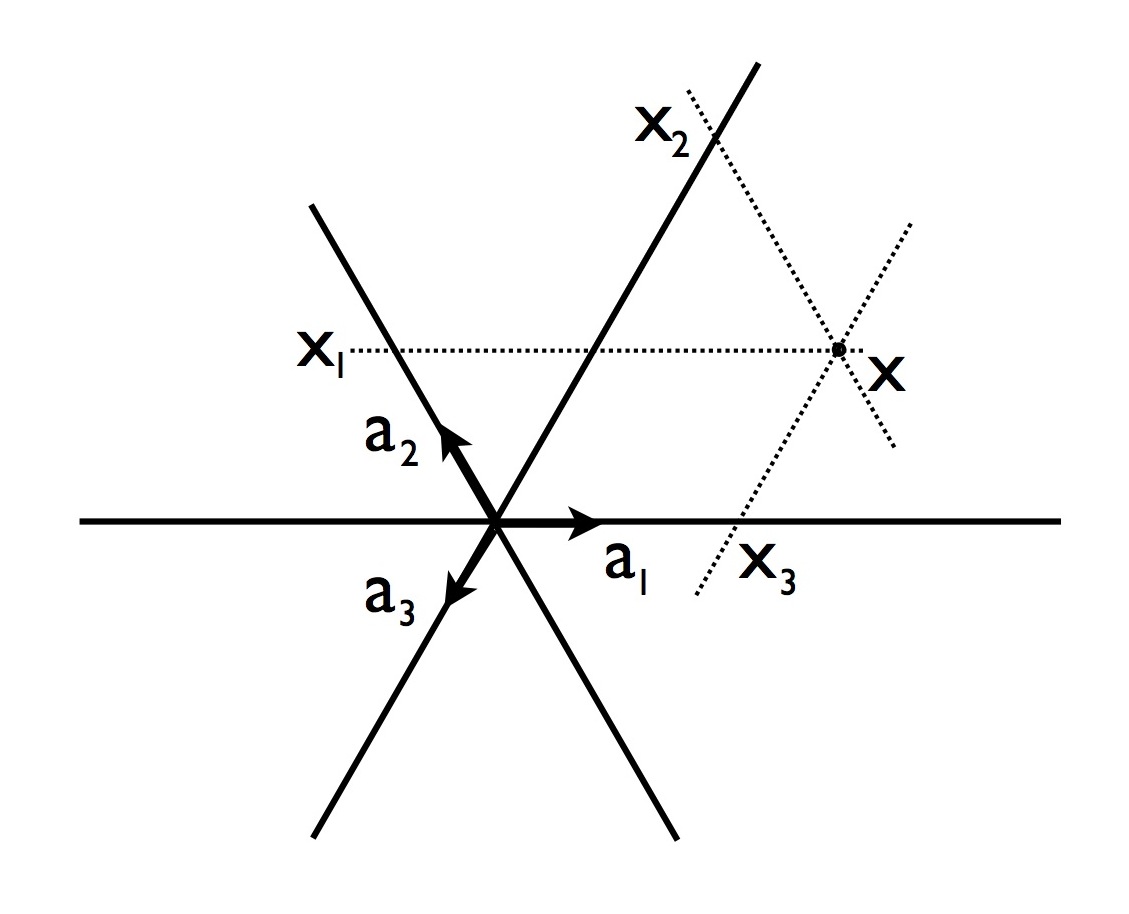}
\caption{The three coordinate system of labelling points in the plane.}
\label{coordinates}
\end{figure}

\subsection{{\bf CHT} formula}

In this section we derive the formula for parity for the {\bf CHT} tiling \cite{ST}. 
The  {\bf CHT} tiling has the advantage that all the shifting due to the choice of the 
triangulation is taken out of the way, and this makes it easier to see what is going on.
Our notation and use of coordinates
is different from that in \cite{ST}, but the argument is essentially the same.

\begin{figure}
\centering
\includegraphics[width=7cm]{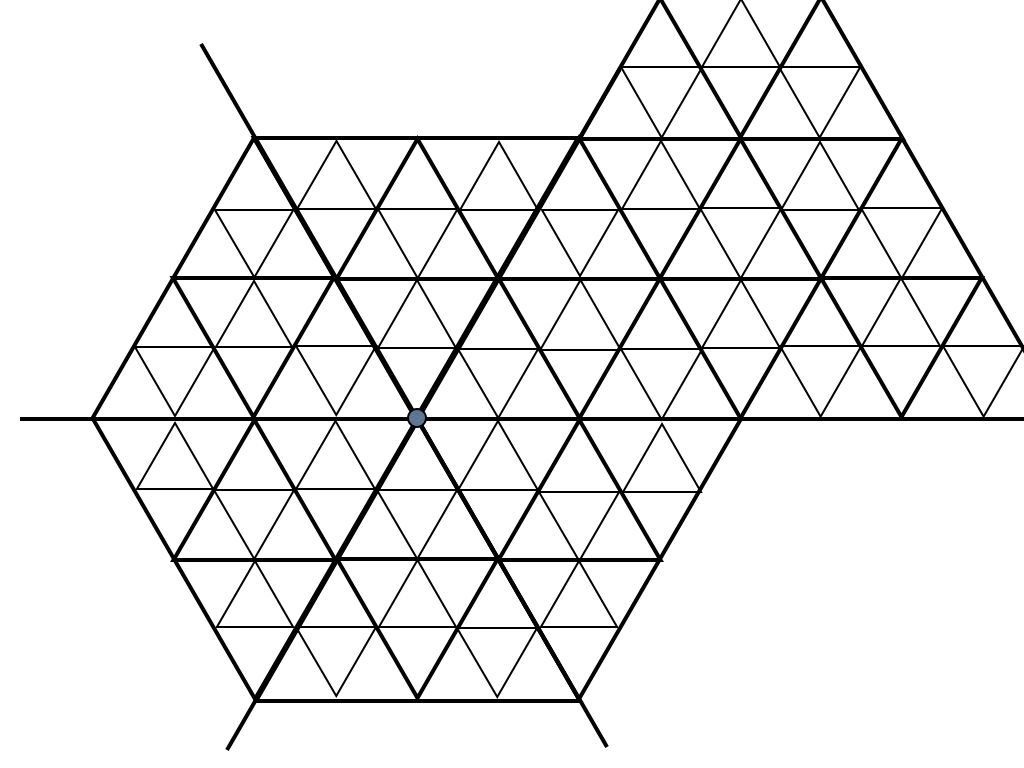}
\caption{Part of the triangulation corresponding to ${\bf q}$ centered at $0$ (indicated by the 
dot). Triangles of scales
$1,2,4$ are shown, as well as part of a triangle of scale $8$. }
\label{CHTSetUp}
\end{figure}
Fig.~\ref{CHTSetUp} shows how the {\bf CHT} triangulation looks around its center $(0,0)$.
The formula for parity is made of two parts each of which corresponds to one the two features which combine
to make parity: edge shifting and the color. 

First consider the shifting part of the formula. We consider a horizontal line of the  {\bf CHT} triangulation,
different from the $a_1$-axis. This line meets the $a_3$-axis at a point $(n2^k,-n2^k,0)$ for
some non-negative integer $k$ and some odd integer $n$. This point is the apex of a level
$k$ triangle and is the midpoint of an edge from a triangle of level $k+1$ (though the $a_3$-axis
itself is of infinite level here). As such we see that the horizontal edge to the right from 
$(n2^k,-n2^k,0)$ is shifted downwards. As the edge passes into the next level $k+1$ triangle we see
that the shift is upwards. This down-up pattern extends indefinitely both to the right and to the left. 
In Fig.~\ref{HorizontalParity} $n=1$ and $k$ is unspecified, but the underlying idea
does not depend on the value of $n$. We now note that the points
along the horizontal edge rightwards from $(2^k,-2^k,0)$ are $(2^k,-2^k -1,1), (2^k,-2^k-2,2), \dots$,
or $x= (2^k,-2^k - x_3,x_3)$ in general. Now we note that
\begin{equation}\label{shiftParity}
 \left \lfloor \frac{x_3}{D(x_3+x_2)} \right \rfloor  =
\begin{cases} 0 \quad \mbox{if $x$ corresponds to a shift down edge,}\\
1 \quad \mbox{if $x$ corresponds to a shift up edge\,.}
\end{cases}
\end{equation}

\begin{figure}
\centering
\includegraphics[width=8cm]{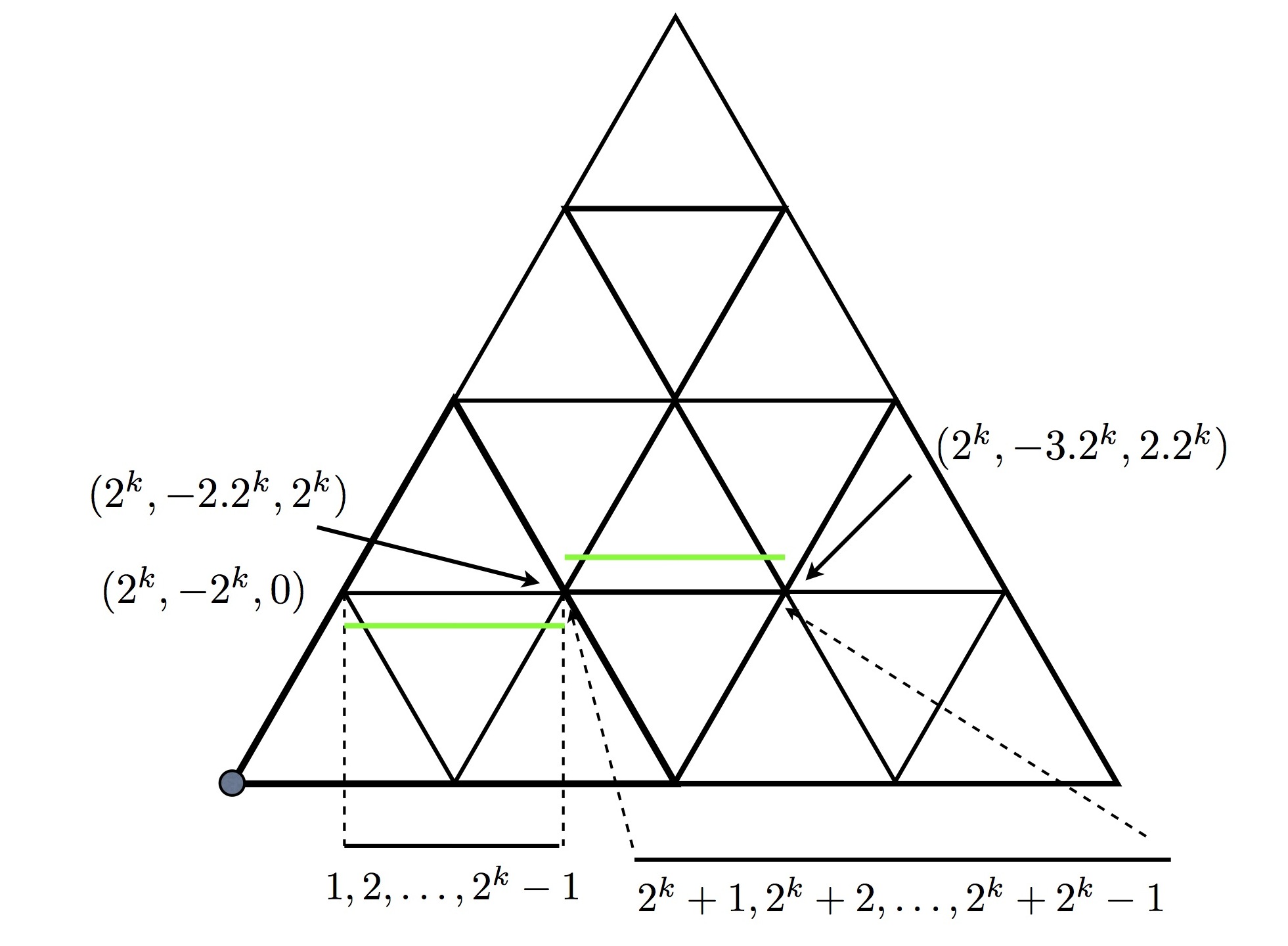}
\caption{The horizontal line through the point $(2^k,-2^k,0)$ is seen as passing through
midpoints of consecutive $2^{k+1}$ triangles. The corresponding edges along this
line shift downwards and upwards alternately. The points $(2^k,-2^k-m,m)$ with 
$m=1,2,\dots, 2^k-1$ are on a shift-down edge. The next set for $m= 2^k+1, 
\dots,2^k + 2^k -1$ are on a shift-up edge. The next set, $m= 2. 2^k+1, \dots 2.2^k + 2^k-1$
are on a shift-down edge, etc. The formula $\lfloor \frac{x_3}{D(x_3+x_2)}\rfloor
= \lfloor m/(2^k)\rfloor$ accounts
for this precisely, varying between $0$ and $1$ according to down and up. }
\label{HorizontalParity}
\end{figure}

Thus this is the formula for edge downwards ($0$) and edge upwards ($1$).
This formula is not valid if $x_3 \equiv 0 \mod 2^k$. What distinguishes these bad values is that 
for these, and these only, $D(x_2) \neq D(x_3)$. We see that the fact that we
are dealing with a horizontal line (in the direction of the $a_1$ axis) is related to the fact
that $D(x_1)$ is the largest of $\{D(x_1),D(x_2),D(x_3)\}$, and whenever that condition fails
the above formula fails. But then of course we should use the appropriate formula with
the indices cycled.

\medskip

Next we explain the color component of the formula. The underlying idea is much the same,
but, as to be expected, the details are a little more complicated. The color lines are the w-lines and 
are oriented in one of the three $w$ directions. We treat here the case of color lines that
are in the vertical direction. The formula utilizes the same three coordinate formulation above.
For other w-directions one cycles the three components around appropriately.

Consider the sector of the  {\bf CHT} tiling as indicated in Fig.~\ref{CHTVertical1}. The figure indicates
how the color must be on the $a_3$-axis as we proceed in the vertical direction.

\begin{figure}
\centering
\includegraphics[width=8cm]{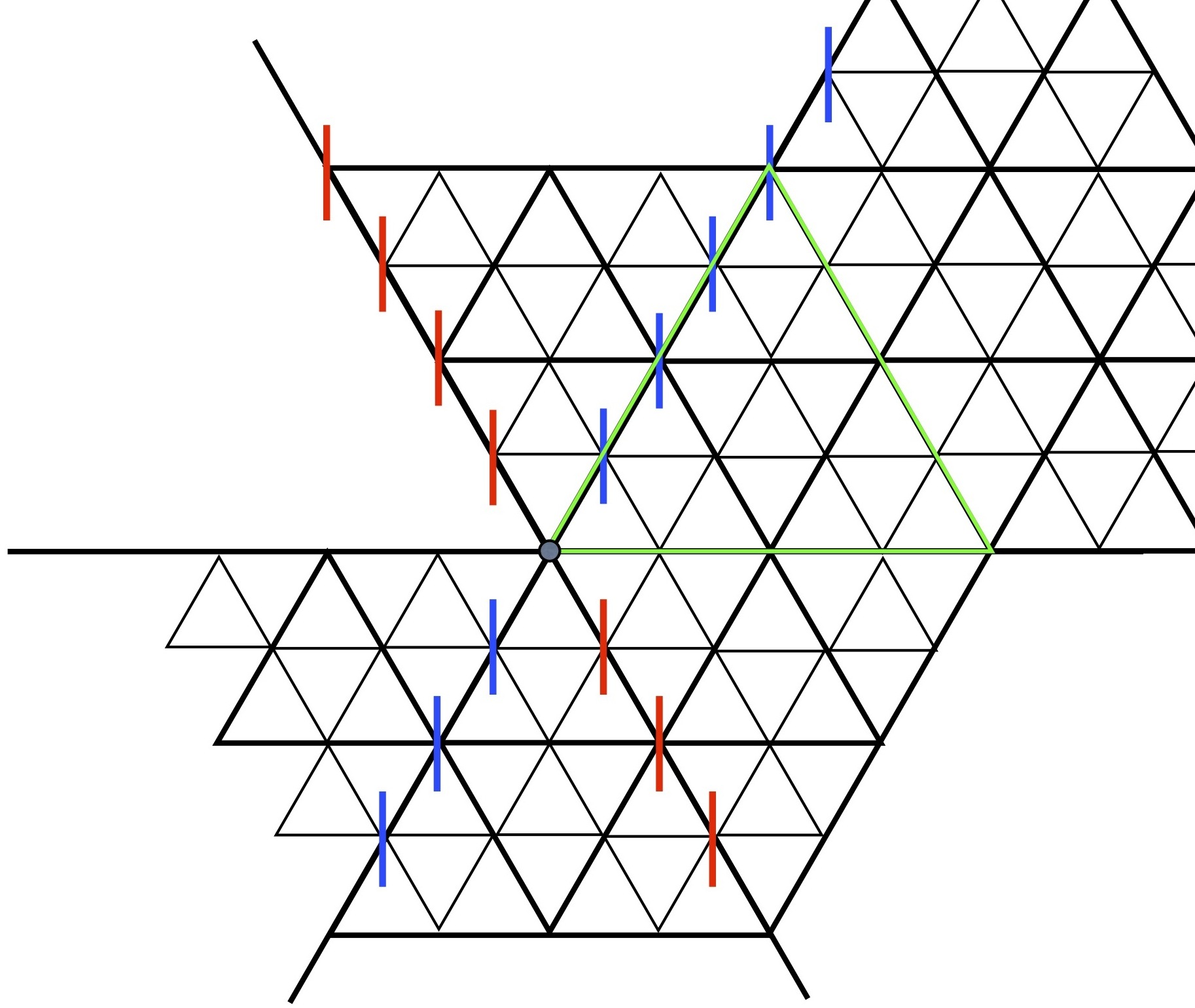}
\caption{Vertical color lines start with a full blue diameter or full red diameter as shown. The green triangle
indicates why the second blue line segment up from the origin is in fact blue. The vertical line
is centered at the mid-point of an edge of a level $4$ triangle and 
passes into one of the level $2$ corner triangles of this level $4$ triangle. The discussion on
color shows that it must be blue. All the other blue line segments are explained in the same
way. We cannot assign color at the origin itself since the origin is not the mid-point of any edge. }
\label{CHTVertical1}
\end{figure}

\begin{figure}
\centering
\includegraphics[width=8cm]{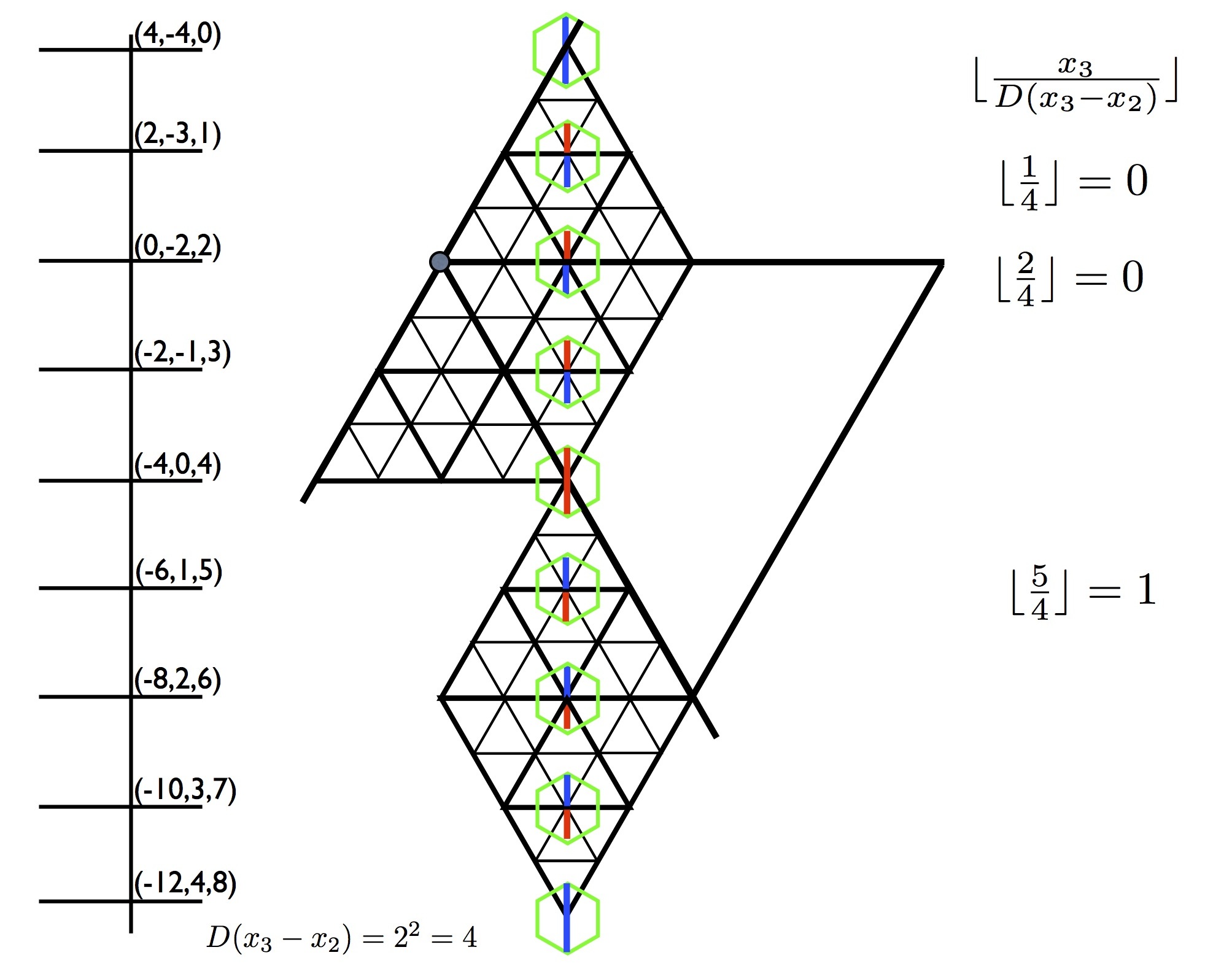}
\caption{In the figure the small circle indicates $(0,0)$. A vertical color line is shown, which meets 
the $a_3$ axis in the point $p= (4,-4,0)$. 
The points on this axis are all of the form $(u,-u,0)$, $u\in \ZZ$. In the  {\bf CHT} tiling set-up, 
D(u) indicates that the point is a vertex of a level $\log_2 D(u)$ triangle, in this case
$D(4)=4$, so we are on a level $2$ triangle ($2^2=4$). We already saw that the color starts
with a full blue diagonal at $(4,-4,0)$. Moving down the line to the next point decreases $x_1$ by $2$ and
increases $x_2,x_3$ by $1$. We note that $x_3-x_2$ remains constant, and $D(x_3 -x_2) = 4$.
At the second step we cross, at right-angles into another triangle of level $2$, 
and the color proceeds without interruption. At the $4$th step we are at a 
vertex that is the midpoint of the side of a triangle of level $3$ and looking up our vertical line
we can see that it is passing into a corner triangle of level $4$ -- namely where we just
came from, and we see a forced full red diagonal. In the first three steps the diameters
are all red-blue (top to bottom), whereas after the red-red diagonal the next three
steps are blue-red diameters, so the is a switch that affects parity.  We can ignore the points
with full diameters (they get sorted out in a different w-direction). We note
that $\lfloor\frac{x_3}{D(x_3-x_2)}\rfloor$ maintains the value $0$ on steps $1,2,3$ and maintains
the value $1$ on steps $5,6,7$, showing that the formula notices the change of diameters
correctly. If we continue $\lfloor\frac{x_3}{D(x_3-x_2)}\rfloor =2$ on the next three step sequence,
but modulo $2$ this is the same as $0$. }
\label{CHTVertical2}
\end{figure}

Most of the explanation for the color part of the formula appears in the caption to 
Fig.~\ref{CHTVertical2}. Although that picture seems tied to the point of intersection
of the vertical line and the $a_3$-axis having the special form $(4,-4,0)$ we note that
the same applies whenever $D(x_3-x_2) =4$. It is $D(x_3-x_2)$ that determines the level
of the triangle that we are looking at and thus how the stepping sequence will modify the
hexagon diameters. In the case where it is $4$ here there are $3$-step sequences of one diagonal type
followed by $3$-step sequences of the other type. If $D(x_3-x_2) =2^k$ these become
$2^k-1$ step sequences, and still $\lfloor\frac{x_3}{D(x_3-x_2)}\rfloor$ changes by $1$ each time
we move from one $2^k-1$ step sequence to the next.  All sequences start from the
full blue diameter with $x_3 =0$ and $x_3$ increases by $1$ at each step.

In putting the two formulas together, we note first of all that although the formulas have been
derived along specific $a$ and $w$ axes, the formulas remain unchanged if the same configurations
are rotated through an angle of $ \pm 2 \pi/3$. Likewise the coloring and shifting rules depend on the
geometry and not the orientation modulo $\pm 2 \pi/3$. The final formula is then effectively just the
sum of the two formulas that we have derived, and it is only a question of determining which color of tile
belongs to parity $0$. 

\begin{theorem}\cite{ST} \label{parityThm}
In the  {\bf CHT} tilings centered at $(0,0)$, the parity of a hexagonal tile
centered on $x= (x_1,x_2,x_3)$ is 
\[ \PP(x) = \left \lfloor \frac{x_{j+2}}{D(x_{j+2}-x_{j+1})}\right \rfloor +  
\left \lfloor \frac{x_{j+2}}{D(x_{j+2} +x_{j+1})}\right \rfloor 
\mod 2  \,,\]
provided that $D(x_j)$ is the maximum of $D(x_j), D(x_{j+1}), D(x_{j+2})$ and 
$x_{j+1} \pm x_{j+2} \neq 0$ {\rm (}subscripts $j$ are taken modulo $3${\rm )}. 
\qed
\end{theorem}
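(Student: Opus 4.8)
The plan is to assemble the formula from the two independent ingredients isolated in the preceding discussion, the edge shift and the color order, and to use the triple-coordinate bookkeeping together with the three-fold rotational symmetry to reduce everything to one reference orientation. Recall that parity is the mod-$2$ sum of a shift bit and a color bit, and that flipping either bit flips the parity. Hence it suffices to produce a $\{0,1\}$-valued formula for each bit separately and add them modulo $2$.

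First I would pin down the stripe direction. As already noted, for $x\neq 0$ the constraint $x_1+x_2+x_3=0$ forces exactly two of $D(x_1),D(x_2),D(x_3)$ to be equal with the third strictly larger. Observing that the level of the $a_i$-line through $x$ equals $\nu(x_i)$, the unique largest $D(x_j)$ singles out the $a_j$-short diameter as the one of highest level, which by Lemma~\ref{generic-a and three short diameters} is precisely the stripe. Since the cyclic shift $(x_1,x_2,x_3)\mapsto(x_3,x_1,x_2)$ is a counterclockwise rotation by $2\pi/3$, and both shifting and coloring depend only on the geometry modulo such rotations, I may assume $j=1$, i.e. the stripe is horizontal.

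For the shift bit I would run the computation behind \eqref{shiftParity} and Fig.~\ref{HorizontalParity}: along the horizontal stripe line the edges alternate, in blocks of length $D(x_1)=2^k$, between shift-down and shift-up, with running coordinate $x_3$, so the bit is $\lfloor x_3/D(x_1)\rfloor \bmod 2$; using $x_2+x_3=-x_1$ this is $\lfloor x_{j+2}/D(x_{j+2}+x_{j+1})\rfloor$. For the color bit I would use the vertical $w_2$-line crossing this stripe at right angles (Fig.~\ref{CHTVertical1}, Fig.~\ref{CHTVertical2}): along it $x_3-x_2$ is constant and records the level via $D(x_3-x_2)=2^k$, the color order switches between red-blue and blue-red in blocks set by this value, and the same running coordinate $x_3$ increases by one at each step, giving $\lfloor x_{j+2}/D(x_{j+2}-x_{j+1})\rfloor \bmod 2$. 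Here I would check carefully that the full-diameter vertices and the block-boundary conventions do not corrupt the floor's parity (as the caption of Fig.~\ref{CHTVertical2} shows, the integer quotient may exceed $1$, yet is correct modulo $2$). Adding the two bits and cycling the indices back yields the stated formula, valid precisely when $x_j\neq 0$ and $x_{j+1}\neq x_{j+2}$, i.e. $x_{j+1}\pm x_{j+2}\neq 0$, which are exactly the points lying off the infinite-level $a$- and $w$-lines through the origin where shift or color is undefined.

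The main obstacle I anticipate is the compatibility of the two halves: one must verify that the single index $j$ chosen by maximality of $D(x_j)$ simultaneously governs the shift-line (parallel to the stripe, block length $D(x_{j+1}+x_{j+2})=D(x_j)$) and the perpendicular color-line (block length $D(x_{j+2}-x_{j+1})$), and, crucially, that the very same coordinate $x_{j+2}$ serves as the monotone running parameter along both lines. Establishing this alignment, rather than the individual floor computations, is the heart of the argument; once it is in place the mod-$2$ additivity of parity and the rotational symmetry complete the proof.
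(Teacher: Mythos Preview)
Your proposal is correct and follows essentially the same route as the paper: the derivation of the two $\{0,1\}$-bits for shift and color is exactly the content of the discussion preceding the theorem (the paragraphs around \eqref{shiftParity} and Figs.~\ref{HorizontalParity}--\ref{CHTVertical2}), and the reduction to a single $j$ via the maximal $D(x_j)$ and three-fold rotational symmetry is handled just as you describe. The one small thing you omit is the final calibration step, which is all the paper's formal ``Proof'' paragraph actually contains: having derived the formula up to an overall sign, one must check a single point (the paper uses $x=(2,-3,1)$, where the formula gives $0$ and the tile is gray) to pin down the convention white $\leftrightarrow 1$, gray $\leftrightarrow 0$.
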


{\bf Proof}: Referring to Fig.~\ref{CHTVertical2}, we check the parity of the tile at $(2,-3,1)$.
In this case $j$ of the theorem is $1$ and the displayed formula gives the value $0$. On the other
hand, the edge shift is down at $(2,-3,1)$ and the shifted edge meets the blue part of the hexagonal
diameter, whence the tile is gray. This establishes the parity formula everywhere. \qed

\begin{remark} Recall that in the paper we have the convention that white corresponds to $1$ and gray corresponds to $0$.
\end{remark}

\begin{remark}
Notice that in the  {\bf CHT} triangulation centered at $(0,0)$ the hexagon diameters 
along the three axes defined by $a_1,a_2,a_3$ 
have no shift forced upon them and can be shifted independently
either way to get legal tilings. These are the hexagons centered on the 
points excluded by the condition $x_{j+1} + x_{j+2} \neq 0$. Similarly the
three w-lines through the origin have no coloring pattern forced upon them
and can independently take either. The centers of the hexagons that lie
on these lines are excluded by the condition  $x_{j+1} - x_{j+2} \neq 0$.
In the  {\bf CHT} tiling, the central tile can be taken to be either of the two hexagons and in any
of its six orientations. Having chosen one of these $12$ options for the
central tile the rest of the missing information for tiles is automatically
completed.
The parity function $\PP$ can be then extended to a function $\PP^e$ so as to take
the appropriate parity values on the $6$ lines that we have just described. 
\end{remark}

\subsection{Parity for other tilings}
We can create a formula for arbitrary triangulations $\CalT({\bf q})$ by the following 
argument. First of all consider what happens if we shift the center of our triple coordinate
system to some new point $c=(c_1,c_2,c_3)_0 \in Q$ (we are in the triple
coordinate system centered at the origin and have indicated this with the subscript
$0$). Then relative to the new center $c$, still using axes in the directions $a_1,a_2,a_3$,
the triple coordinates of $x= (x_1,x_2,x_3)_0$ are $(y_1,y_2,y_3)_c = (x_1-c_1,x_2-c_2,x_3-c_3)$.
Thus, if consider ${\bf q} = c$ (or more properly ${\bf q} = (c,c,c, \dots)$) so we are looking at
the  {\bf CHT} triangulation now centered at $c$, then the formulae above become
\[ \PP_c(x) = \left \lfloor \frac{x_j -c_j}{D(x_{j+2}-c_{j+2}-x_{j+1} +c_{j+1})}\right \rfloor +  
\left \lfloor \frac{x_j-c_j}{D(x_{j+2} -c_{j+1} +x_{j+1}-c_{j+2})}\right \rfloor 
\mod 2  \,.\]

Consider now an element ${\bf q} = \lim_{k\to\infty}{q_1+q_2 + \dots +q_k}$ and 
the corresponding sequence of triangulations $\{\CalT(q_1+q_2 + \dots +q_k)\}$. 
These converge to $\CalT({\bf q})$, and with them also we get convergence of
edge shifting and color. 

It is also true that for any fixed $x= (x_1,x_2,x_3)_0$ in the plane
the $D$- values of the three triple components of
$y=(y_1,y_2,y_3) := x-(q_1+q_2 + \dots +q_k)$, as well their various pairwise sums and
differences, 
do not change once $k$ is high enough, since if the $2$-content of a number $n$ is $2^m$ then
so also is the $2$-content of $n + 2^p$ for any $p>m$. Thus 
$\PP(x -(q_1+q_2 + \dots +q_k))$ is constant once $k$ is large enough, and we can denote
this constant value by $\PP_{\bf q} (x):= \PP(x - {\bf q})$. This defines the parity function 
$\PP_{\bf q}$ for $\CalT({\bf q})$.
Although $\{\CalT(q_1+q_2 + \dots +q_k)\}$ is a  {\bf CHT} triangulation, its limit $\CalT({\bf q})$
need not be. In fact we know that the translation orbit of any of the  {\bf CHT} tilings centered
at $(0,0)$ is dense in the minimal hull, and so we can compute a parity function 
$\PP_{\bf q}$ 
of any tiling of the minimal hull in this way. In the case of generic ${\bf q}$ this results
in a complete description of the parity of the tiling. In the case that there is convergence
of either a-lines or w-lines (so one is not in a generic case) one can still start with one of the extension functions $\PP^e$ and
arrive at a complete parity description $\PP^e_{\bf q}$ of any of the possible tilings associated
to $\CalT({\bf q})$.

\begin{coro} The parity function for a generic tiling is $\PP_{\bf q}$. 
\end{coro}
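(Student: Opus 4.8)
The plan is to present $\CalT({\bf q})$ as a limit of the central‑hexagon triangulations $\CalT(c_k)$, where $c_k := q_1 + \cdots + q_k \in Q$, and to transport the already‑established {\bf CHT} parity formula (Theorem~\ref{parityThm}, in its recentred form $\PP_{c_k}(x) = \PP(x-c_k)$) across this limit. Two distinct convergences have to be matched: the topological convergence of the tiles in the hull, and the purely arithmetic stabilization of the expression $\PP(x-c_k)$. The corollary amounts to the assertion that these two limits describe the same parity.

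First I would record that each $\CalT(c_k)$ is exactly the {\bf CHT} triangulation centred at $c_k$, so that the parity of the tile at a fixed $x\in Q$ in $\CalT(c_k)$ is $\PP(x-c_k)$, \emph{provided} the validity hypotheses of Theorem~\ref{parityThm} hold for $y := x-c_k$: namely $y\neq 0$, the strict maximum among $D(y_1),D(y_2),D(y_3)$ is attained at a unique index $j$, and $y_{j+1}\pm y_{j+2}\neq 0$. Next I would invoke the arithmetic stabilization noted just before the corollary: since $c_{k+1}-c_k = q_{k+1}\in 2^kQ$, once $2^k$ exceeds the finite $D$‑values of the components of $y$ and of their pairwise sums and differences, those $D$‑values no longer change; hence $\PP(x-c_k)$ is eventually constant, with constant value $\PP(x-{\bf q}) =: \PP_{\bf q}(x)$.

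The substantive point, where genericity enters, is to confirm that the validity hypotheses hold for all large $k$. If $y_{j+1}+y_{j+2}=0$ held for infinitely many $k$, the stabilized value $D((x-{\bf q})_{j+1}+(x-{\bf q})_{j+2})$ would be infinite, i.e. $x-{\bf q}\in\overline{\ZZ_2}\,a$ for some $a\in\{a_1,a_2,a_1+a_2\}$; by Prop.~\ref{exceptional-a} this is precisely the condition that $x$ lie on an infinite $a$‑line, contradicting generic‑$a$. The analogous failure $y_{j+1}-y_{j+2}=0$ would place $x$ on an infinite $w$‑line, contradicting generic‑$w$ by Prop.~\ref{exceptional-w}. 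Finally $y=0$ (that is $x=c_k$) for large $k$ would force ${\bf q}=x\in Q$, a singular ({\bf CHT}) value. Thus for generic ${\bf q}$ all hypotheses of Theorem~\ref{parityThm} are satisfied once $k$ is large. To close the loop I would use convergence in the hull: since $c_k\to{\bf q}$ in $\overline Q$, the continuity of $\xi$ together with the local determination of shift and colour in the generic locus shows that $\CalT(c_k)$ and $\CalT({\bf q})$ agree on any fixed ball for large $k$; hence the true parity of the tile at $x$ in $\CalT({\bf q})$ equals its parity in $\CalT(c_k)$, which by the preceding steps equals $\PP(x-c_k)=\PP_{\bf q}(x)$.

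The main obstacle is exactly this genericity step: the exceptional lines of the approximating {\bf CHT} tilings always pass through the moving centre $c_k$, and one must guarantee that for a fixed $x$ they are eventually avoided. It is precisely the absence of infinite $a$‑ and $w$‑lines (generic‑$a$ and generic‑$w$, via Prop.~\ref{exceptional-a} and Prop.~\ref{exceptional-w}) that forces the stabilized $D$‑values of $y_{j+1}\pm y_{j+2}$ to be finite, and hence makes the formula applicable for all large $k$. Everything else is routine bookkeeping with $2$‑adic valuations and the hull topology.
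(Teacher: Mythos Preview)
Your argument is essentially the same as the paper's: the corollary is not given a separate proof there but is extracted from the discussion immediately preceding it, which (i) takes the sequence of {\bf CHT} triangulations $\CalT(c_k)$ with $c_k=q_1+\cdots+q_k$, (ii) observes that they converge to $\CalT({\bf q})$ together with edge shifting and colour, and (iii) uses the $2$-adic stabilization of $D$-values to conclude that $\PP(x-c_k)$ is eventually constant and equal to $\PP_{\bf q}(x)$. You have fleshed out the genericity step (why the validity hypotheses of Theorem~\ref{parityThm} eventually hold at each fixed $x$) more explicitly than the paper does, via Propositions~\ref{exceptional-a} and~\ref{exceptional-w}, which is a welcome clarification.

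One small wording issue: invoking ``the continuity of $\xi$'' is not quite right, since $\xi:X_Q\to\overline Q$ goes the wrong way for what you need. What actually justifies that $\CalT(c_k)$ and $\CalT({\bf q})$ agree on large balls is the statement (proved in the course of Theorem~\ref{minimal}) that for \emph{generic} ${\bf q}$, $Q$-adic convergence ${\bf q}'\to{\bf q}$ forces convergence of the corresponding tilings in $X_Q$; equivalently, the local determination of shift and colour from the triangulation that you cite in the same sentence. Replace the appeal to continuity of $\xi$ by a direct reference to that fact and the proof is complete.
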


\section{The hull of parity tilings} \label{parityHull}
A Taylor--Socolar tiling is a hexagonal tiling with two tiles (if we allow rotations). With the appropriate
markings (not the ones we use in this paper), the two tiles can be considered as reflections of each other.
If we just consider the tiling as a tiling by two types of hexagons, white and gray, then we get the striking
parity tilings, for example,  of Fig.~ \ref{parityPattern}. We may consider the hull $Y_Q$ created by these parity tilings.
Evidently $Y_Q$ is a factor of $X_Q$. In this section we show that in fact the factor mapping is 
one-to-one -- in other words, when we discard all the information of the marked
tiles except the colors white and gray -- no information is lost, we can recover
the fully marked tiles if we know the full parity tiling. 
The argument uses a tool that is central to the
original work of Taylor, but has only played an implicit role in our argument: the Taylor--Socolar tilings have an underlying 
scaling inflation by a scale factor of $2$. One form of this scaling symmetry is especially obvious from the point of view
of the $Q$-adic triangularization.

\subsection{Scaling} \label{scaling}\
 Suppose that we have a Taylor--Socolar tiling with $\CalT({\bf q})$, where ${\bf q}= (q_1, q_1+q_2, $ $\dots, q_1 + \cdots+ q_k, \dots)$.
Then $q_1 + 2Q$ is the set of triangle vertices of all triangles of level at least $1$. To make things quite specific, which
we need to do to go on, we choose $q_1 \in \{0, a_1,a_2, a_1+a_2\}$. In the same way we shall assume
$q_2 \in 2\{0, a_1,a_2, a_1+a_2\}$, and so on. We can view
$q_1 + 2Q$ as being a new lattice (even though it may not be centered at $0$) and then we note that 
${\bf q}' := (0, q_2, q_2+q_3, \dots, q_2 + \cdots+ q_k, \dots)$ is another triangularization (now of this larger scale lattice, and taken
relative to an origin located at $q_1$) that
determines a Taylor--Socolar tiling with hexagonal tiles of twice the size.
Each of these new double-sized hexagons is centered on a hexagon of the original tiling which itself is centered at a vertex of a level $1$ triangle. 

The new triangularization has its own edge shifting, and since all that has happened is that all the lines
of the triangularization that do not pass through a vertex of a level 1 triangle have gone, we can see that
the shifting rules mean that the remaining lines still shift exactly as they did before. 
Also the whole process of coloring the new double-size hexagons goes just as it did before. 
Fig.~\ref{rescaling1} shows
why the coloring of the new double-size hexagons is the same as the coloring of the original size hexagons
on which they are centered. 

What happens if the Taylor--Socolar tiling $\CalT({\bf q})$ is non-generic? A tiling is non-generic if and only if
it has an $a$-line or a $w$-line of infinite level. From the definitions, one sees that 
just rescaling so that the level $0$ triangles vanish and all other triangles are now 
lowered in level by $1$ still leaves infinite level lines in tact and so we are still in a non-generic case. We can
also see this in detail. Non-generic tilings happen if and only if
${\bf q} \in x + \overline{\ZZ_2} \,a$ or ${\bf q} \in x + \overline{\ZZ_2} \,3w$ for some $x\in Q$.
Consider ${\bf q}' = (0, q_2, q_2+q_3, \dots, q_2 + \cdots+ q_k, \dots) \in 2 \overline Q$. For definiteness, take the second case. 
Then we can write 
\[{\bf q'} \in -q_1 + x + \alpha 3w + \overline{\ZZ_2}  3(2w) \]
where $\alpha =0$ if $-q_1 + x \in 2Q$ and $\alpha =1$ if $-q_1 + x \notin 2Q$. This gives
${\bf q'} \in  x' +  \overline{\ZZ_2}  3(2w)$ for some $x' \in 2Q$, and we are in the non-generic case again.
The same thing happens in the other case. 

\begin{figure}
\centering
\includegraphics[width=8cm]{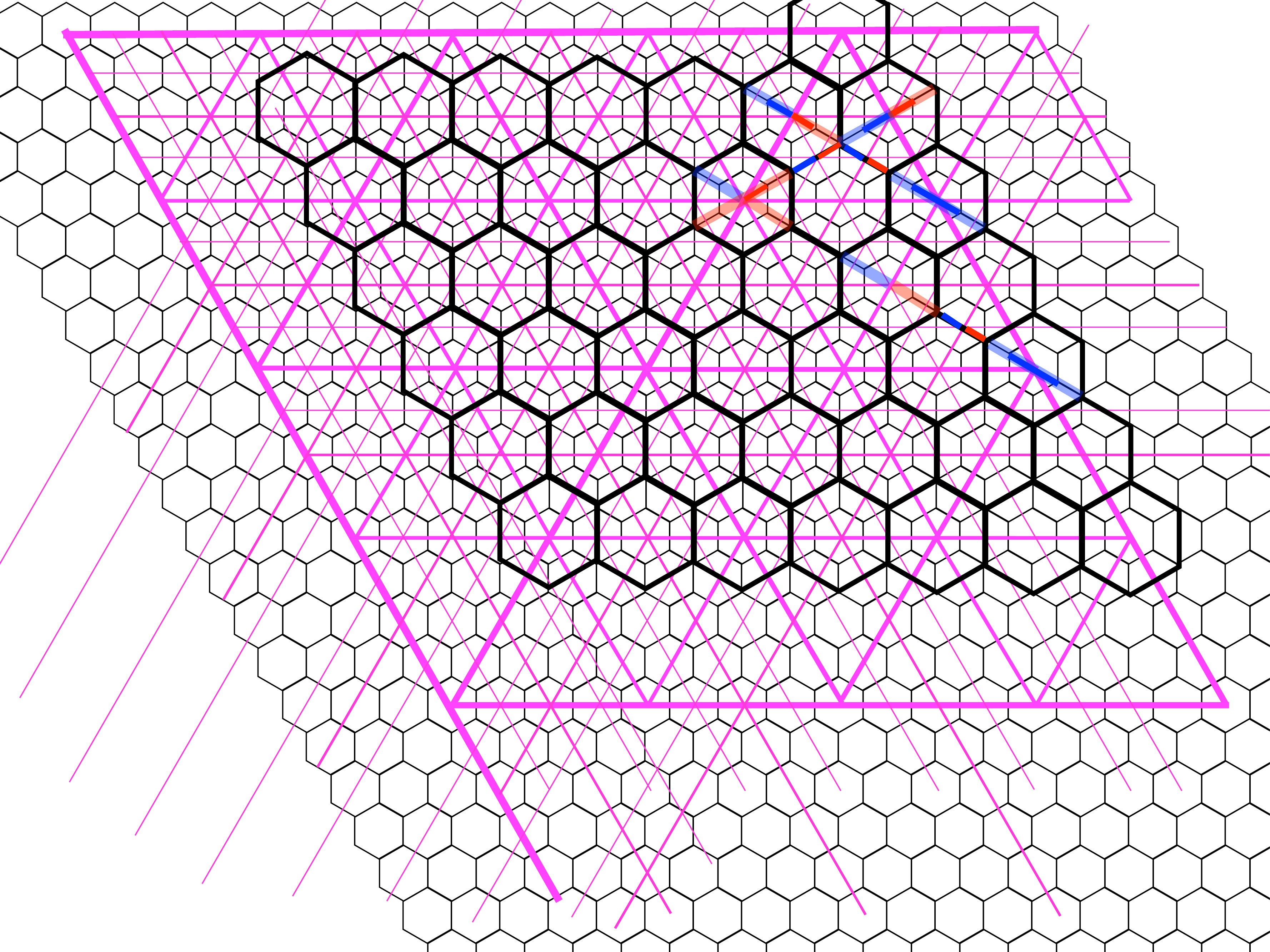}
\caption{This shows how the scaled up hexagon inherit the colors 
of the smaller hexagons on which they are centered. The coloring is carried
out by the same procedure that was used to color the original sized hexagons,
though now the diameter sizes have doubled and only the lines of level at least
$1$ are used.}
\label{rescaling1}
\end{figure}

This scaling self-similarity of the Taylor--Socolar tilings is an important part of Taylor's original construction of the tilings. Here we see self-similarity arise from the simple procedure of `left shifting' the $Q$-adic number ${\bf q}$. Geometrically, shifting means subtracting $q_1$ and dividing by $2$.
We have restricted $q_1, q_2, \dots$ to be specific coset representatives so that this process of subtraction/division is uniquely prescribed. 

We can also work this process in the other direction. Given ${\bf q}$ there are four ways in which
to choose a coset from $(\frac{1}{2} Q)/Q $ and for each of these choices we get a new
element of the $Q$-adic completion of $\frac{1}{2} Q$. Thus reducing the scale by a factor of $2$ 
we obtain new triangularizations and hexagonal tilings.\footnote{In our original
analysis of the Taylor--Socolar tilings, we chose the triangle vertices to be in $Q$, whereas
higher level triangles have vertices in cosets of various $2^k Q$, not necessarily 
in $2^kQ$ itself. This initial choice of elements in $Q$ was convenient to keep all
the triangle vertices in $Q$ itself. In the halving process we are doing here, we
may choose any of the cosets of $(\frac{1}{2} Q)/Q$. The canonical choice might
be to choose $0$, but this is not necessary to get a tiling.} The original tiling reappears via the scaling
up by a factor of $2$, as we have just seen. Also note that the new tilings are non-generic if and only if
the original tiling was since infinite $a$ or $w$ lines remain as such in the new tilings. Thus we see
that scaling has no effect on the generic or non-generic nature of the tilings in question. It is also clear
that {\bf CHT} tilings, {\bf iCw-L} tilings, and even {\bf SiCw-L} tilings
all transform into tilings of the same type.

\subsection{From parity to Taylor--Socolar tilings}

Creating a full Taylor--Socolar tiling from a parity tiling is carried out in two steps: first work out
the triangularization, then add the color. For generic tilings the second step is superfluous. 
Even for non-generic tilings, the fact that the tiling is non-generic is an observable property of 
the triangularization and then knowing the parity makes it straightforward to recover the colouring
along the ambiguous line or lines. The color is not a significant issue and we discuss only the recovery of the triangularization here. 

So let us assume that we have a parity tiling $\CalT_P$ that arises from some 
tiling $\CalT({\bf q})$ which at this point we do not know. To obtain the triangularization, i.e. ${\bf q}$,
 we need to work out the translated lattices $q_1 + 2Q, q_2 + 4Q, \dots$. These are the sets of vertices
 of the triangles of levels $\ge1$, $\ge 2$, etc. Suppose that we have a method that
 can recognize the translated lattice $q_1 +2Q$ (which is a subset of the hexagon centers of the parity tiling). 
Then, in effect, we know $q_1$ (at least modulo $2Q$, which is all we need to know about it). We can now imagine changing our view point to double the scale by redrawing the parity tiling with double
 sized hexagonal tiles at the points of  $q_1 +2Q$ and while retaining the color. We know that this new tiling will be the parity tiling of the scaled up tiling $q_1+ \CalT({\bf q'})$ where ${\bf q}' = (0, q_2, q_2+q_3, \dots, q_2 + \cdots+ q_k, \dots)$.  
 
 At this point we can proceed by induction to determine $q_2 + 4Q$, rescale again and get $q_3 + 8Q$, and so on. So what is needed is only to determine the vertices of the level $ \ge 1$ triangles, or equivalently determine
 which hexagons of the parity tiling lie on such vertices.

\medskip

Let us call a patch of $7$ tiles which consists of a tile in the center and its $6$ surrounding tiles a {\em basic patch of $7$ tiles}. The key observation is that in a Taylor--Socolar-tiling it never happens that a basic patch of $7$ tiles whose center tile is a corner tile of a triangle of level $\ge 1$ has $5$ surrounding tiles all of the same color. Thus in the parity tiling
the hexagons centered on the  (as yet unknown) vertices of the level $\ge 1$ triangles, cannot appear as shown in Fig.~\ref{5-hexagonalTile-patch} or in any of their rotated forms.  On the other hand, as we shall see, for all the other cosets of $Q$ modulo
$2Q$ there are points around which such patches (up to rotational symmetry) do occur. Furthermore we do not have to look far in any part of the tiling to find such examples. This is the feature that allows us to distinguish the coset $q_1 + 2Q$ from
the remaining cosets of $Q \mod 2Q$.

The actual proof goes in three steps. In the first we show the `no five hexagons' rule for hexagons centered on the vertices of triangles of level $1$. Next we do the same for all the vertices of triangles of level $\ge 2$. This deals with all hexagons centered on $q_1 + 2Q$. Finally we show that for each of the other cosets of $Q \mod 2Q$ the 
`no five hexagons' rule fails at least somewhere. 

\begin{figure}
\centering
\includegraphics[width=7.2cm]{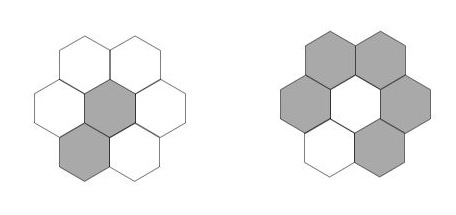}
\caption{Patches of (parity) tiles in which the central tile has $5$ of its surrounding tiles of the same color.}
\label{5-hexagonalTile-patch}
\end{figure}

\medskip

\noindent
 \begin{enumerate}
\item {\bf The vertices of $1$-level triangles}:   
We wish to show that  around each hexagon centered on a point of
$q_1 +2Q$ there are at least two different pairs of tiles of with mis-matched colors
amongst its six surrounding tiles (and hence the no five-hexagons rule is true). This is explained in the text below and in 
Fig.~\ref{5same-color-tile-argument-III}. 
\begin{figure}
\centering
\includegraphics[width=11cm]{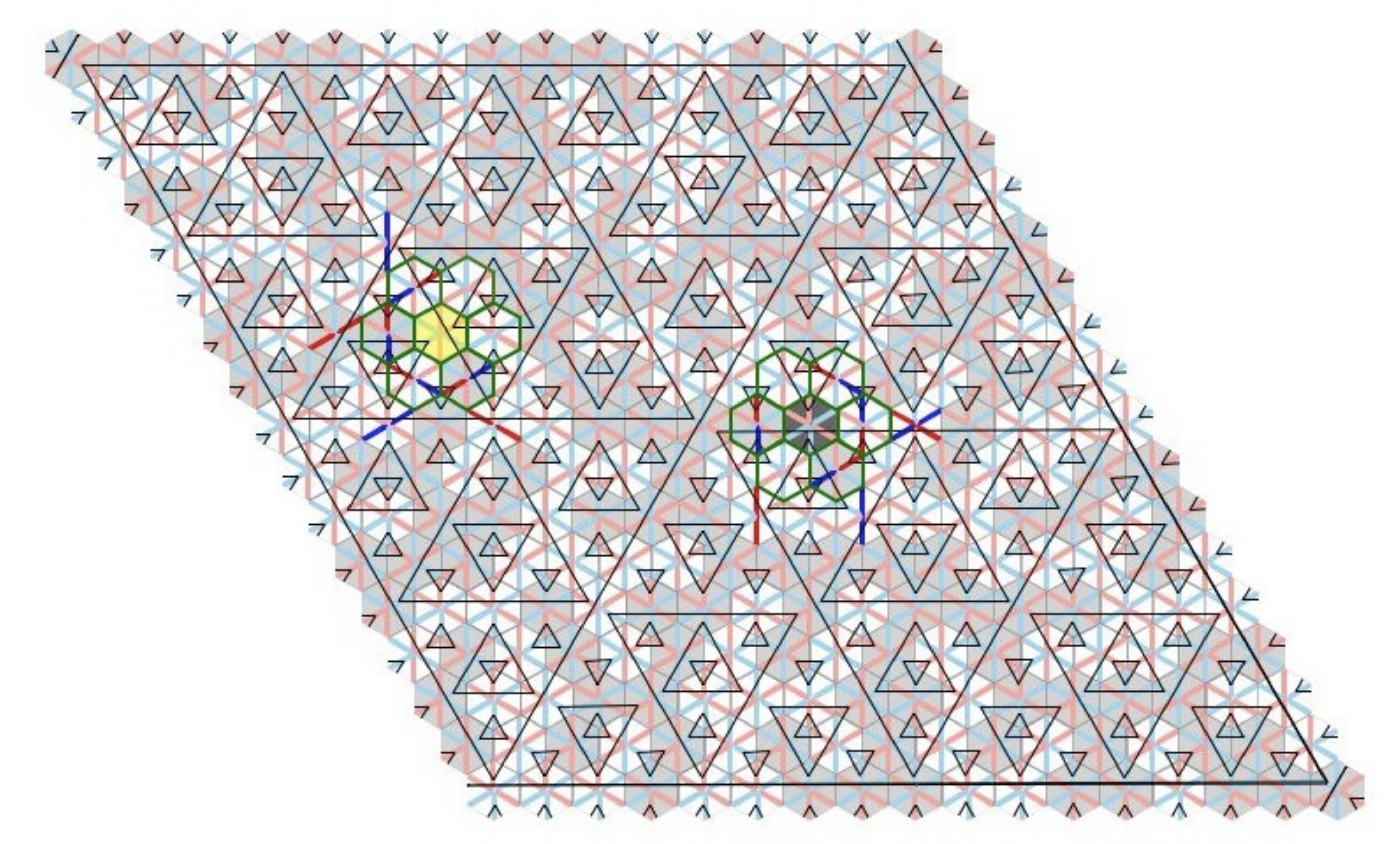}
\caption{Examples are shown which demonstrate how we can see that 
around each hexagon centered on a level $1$ vertex of
$q_1 +2Q$ there are at least two different pairs of tiles with mis-matched colors
amongst its six surrounding tiles. The two cases correspond to a level $1$ vertex at the mid-point
of a level $2$ triangle and a level $1$ vertex a non-midpoint of a higher level triangle. As explained in the text, one pair is found in a uniform way in both cases. The other pair is found by one method in the first
case and another method in the other.}
\label{5same-color-tile-argument-III}
\end{figure}

There are two different situations for a hexagon $H$ centered on the vertex $v$ of a $1$-level triangle. One is the case that $v$ is at the midpoint of a $2$-level triangle. This is indicated on the left side of Fig. \ref{5same-color-tile-argument-III}. The other is the case that $v$ is on the edge of a $n$-level triangle 
where $n \ge 3 $. This is indicated on the right side of Fig. \ref{5same-color-tile-argument-III}. 
In both cases, we note that the two tiles on opposite sides of $H$ which share the long edge of a triangle of level 
$\ge 2$ have different colors. The reason is the following. Apart from the red-blue diameters, the long red and blue diameters and black stripes are same for the both the Taylor--Socolar tile and the reflected Taylor--Socolar tile. However the red and blue diameters of the middle tiles of $2$-level triangles determine different red-blue diameters for the two tiles, and so they have different parity. 

Now we wish to find another pair of tiles with opposite colors for each of the cases.
Let us look at the first case (see the left side of Fig. \ref{5same-color-tile-argument-III}).  Consider
the corner of the level $3$ triangle $T$ that is defined by $H$, and consider the two edges
of $T$ that bound this corner.  The red and blue diameters of the tiles at the mid-points of these two edges of $T$
determine different red-blue diameters for two neighbouring tiles in the surrounding tiles of $H$. This again
results in different parities.

Finally consider the second case (see the right side of Fig.~\ref{5same-color-tile-argument-III}). 
Notice that the long black stripe of $H$ is the part of the long edge of $3$ or higher level triangle.  If this long
edge is from a level $4$ or higher triangle, we consider just the part of it that is the level $3$ triangle $T$ whose edge coincides with the stripe of $H$. The red and blue diameters of the tile centered on the mid-point of this edge of $T$ determine different red-blue diameters for two of the ring of tiles around $H$, as shown.   
\smallskip
\item {\bf The vertices of level $\ge 2$ triangles}:
Next we look at the six tiles surrounding the corner tiles of level $\ge 2$ triangles in a Taylor--Socolar tiling. Notice that the pattern of the colored diameters of six surrounding tiles is same for every corner tile of a level $2$ or higher
 triangle, Fig.~\ref{basicPatch-2hlevelTriangle}. 
\begin{figure}
\centering
\includegraphics[width=11cm]{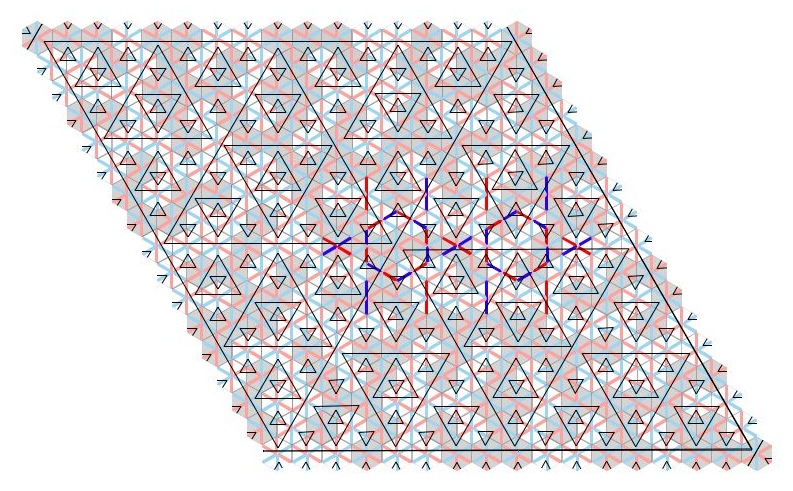}
\caption{}
\label{basicPatch-2hlevelTriangle}
\end{figure}
So what determines the basic patches of $7$ tiles around the corner tiles of level $2$ or higher triangles is the 
pattern of black stripes on it. Furthermore, there is a one-to-one correspondence between these basic patches of $7$ tiles and 
the basic patches of $7$ tiles of white and gray colors. This is shown in Fig. \ref{Parity-center-case}.  
The key point is that this means that the basic patches of $7$ tiles around the vertices of the level $2$
or higher triangles already determine the coloring.
\begin{figure}
\centering
\includegraphics[width=12cm]{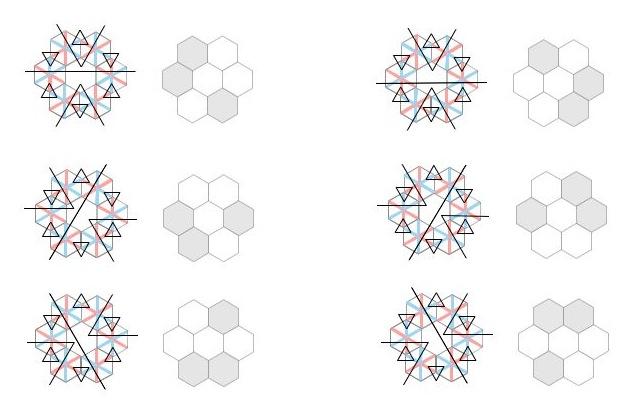}
\caption{The edge and color patterns and corresponding parity patterns that can occur in the hexagons surrounding
the corners of triangles of level $\ge 2$. }
\label{Parity-center-case}
\end{figure}

At this point we know that all the vertices of level $\ge 1$ have at least two tiles of each color
in the ring of any basic patch of 7 tiles.
\smallskip
\item {\bf Seeing how the other cosets of $Q$ mod $2Q$ violate the `two tiles of each color
in the ring of any basic patch of $7$ tiles' rule.}
From a given parity tiling, there are four choices in determining the $1$-level triangles. One of these
is the coset $q_1 +2Q$, and we know that the $7$ tile patches around each of these points
satisfy the `no five-hexagon' rule.  However the three choices, corresponding to the 
other three cosets of $Q \mod 2Q$ all have some $7$ tile patches that violate the rule. We can see violations
to the rule for each of the other three cosets in Fig.~\ref{5colorRuleViolated},
which is a small piece from the lower right corner of Fig.~\ref{parityPattern}. Since any parity tilings in the hull are repetitive, we observe the patches frequently over the parity tiling. Furthermore, since there is only one way which is allowed to determine $1$-level triangles, it does not depend on where one starts to find the $1$-level triangles. They will all match in the end.   

\begin{figure}
\centering
\includegraphics[width=9cm]{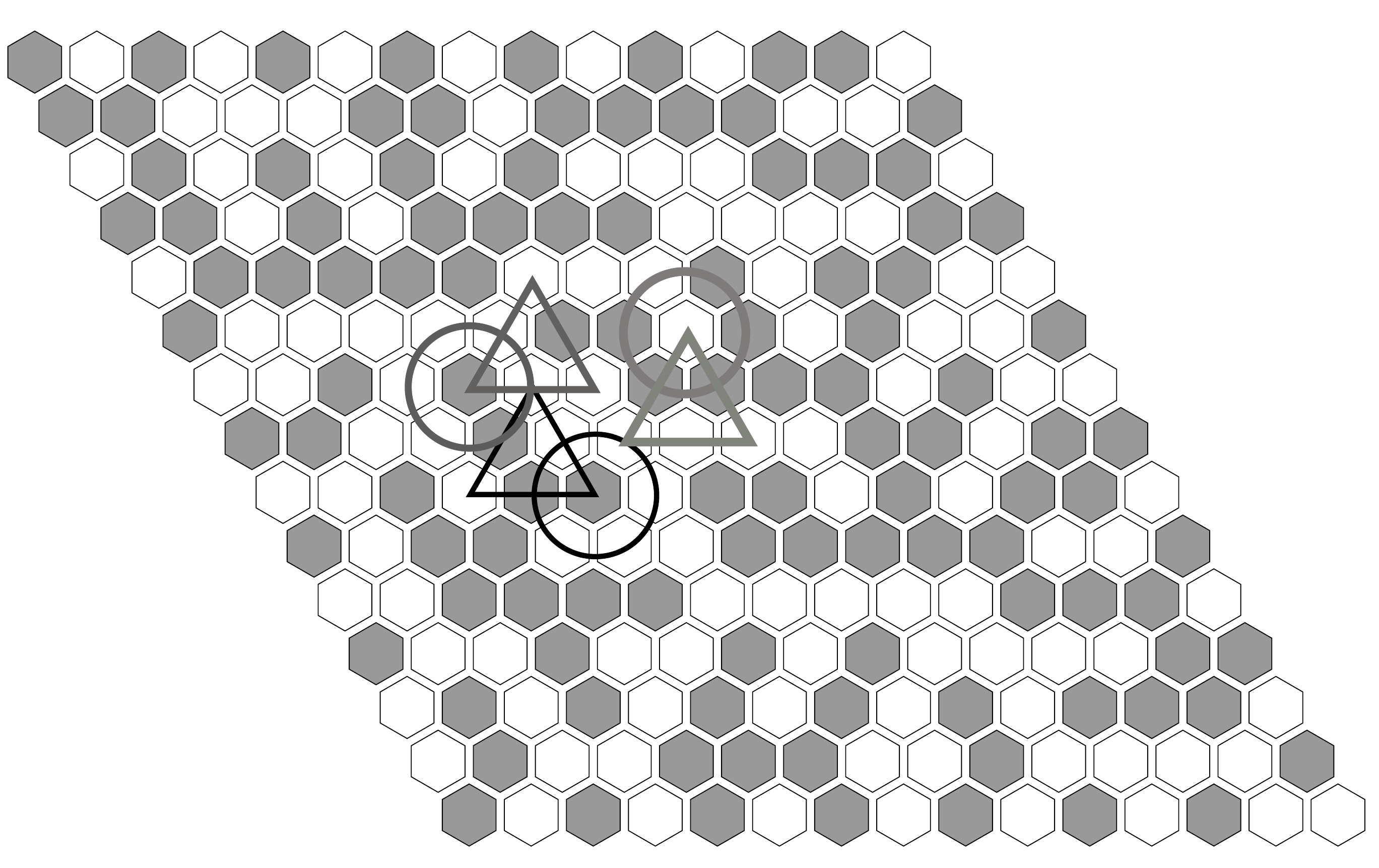}
\caption{Here we see three violations to the five-hexagons rule, the rings of hexagons being
indicated by the circles. The shaded triangles show how the cosets determined by the center look, and
show that the violating hexagons are from three different cosets. The hexagonal pattern comes
from the lower right corner of Fig.~\ref{parityPattern}.}
\label{5colorRuleViolated}
\end{figure}

\end{enumerate}

This finishes the discussion of how the coset $q_1 + 2Q$ is identified in the parity tiling $\CalT_P$.
The scaling argument shows that we can continue the process to identify $q_2 + 4Q, q_3 + 8Q$ and so on. This finally identifies $\bf q$. In fact, even stronger, once we identify $q_2 + 4Q$, we can use the one-to-one correspondence between these basic patches of $7$ tiles and the basic patches of $7$ tiles of white and gray colors as it is shown in Fig. \ref{Parity-center-case} to determine the entire triangularization.  

We note two things here: that this argument of this subsection has not required that the tiling be generic,
and that the process of reconstruction of the triangularization from a parity tiling is local in character.

We conclude that the parity hull loses no color information, and hence the factor mapping from
the Taylor--Socolar hull to the parity hull that simply forgets all information about each tile
except for its parity, is in fact an isomorphism. In fact the two hulls are mutually locally derivable
(MLD) in the sense that each tiling in one is derivable from a corresponding tiling in the other,
using only local information around each tile (see \cite{HF}).

\begin{coro} \label{MLD}
Each Taylor--Socolar tiling and its corresponding parity tiling are mutually locally derivable. \qed
\end{coro}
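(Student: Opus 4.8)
The plan is to establish the two directions of local derivability separately, the forward direction being immediate and the reverse direction carrying all the content. Deriving the parity tiling from a Taylor--Socolar tiling is trivially local with radius $0$: the parity of each tile is by definition an intrinsic feature of that tile (which of the two marked hexagons it is), so the colour-forgetting map is a sliding-block code of radius $0$. Thus the real task is to exhibit a single radius $R$ such that the white/gray parity pattern on $B_R(x)$ determines the complete marking of the tile centered at $x$, for every $x \in Q$ and every tiling.

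For this reverse direction I would follow the coset-identification and scaling machinery assembled in \S\ref{scaling} and in the enumerated argument above. First I would identify the coset $q_1 + 2Q$ of level-$\ge 1$ vertices locally, then rescale once---a local operation, since by Fig.~\ref{rescaling1} each scaled-up hexagon inherits the colour of the original hexagon on which it is centered---to identify $q_1 + q_2 + 4Q$, the level-$\ge 2$ vertices. The tool is the ``no five hexagons'' rule: at every point of $q_1 + 2Q$ the surrounding ring of six tiles contains at least two mismatched colour pairs (the level-$1$ and level-$\ge 2$ vertex cases), whereas for each of the \emph{other} three cosets of $Q/2Q$ the rule fails somewhere. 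Since the parity tilings, being radius-$0$ factors of the Taylor--Socolar tilings---which are repetitive as regular model sets (Cor.~\ref{ms1})---are themselves repetitive, a violating patch for each wrong coset recurs within a fixed distance of any point; hence the correct coset $q_1 + 2Q$ is singled out by elimination within a bounded radius, and the same holds after one rescaling for $q_1 + q_2 + 4Q$. The marking of $x$ is then recovered by its level: if $x$ has level $0$ or $1$ its stripe is a level-$1$ or level-$2$ edge lying inside a small triangle and is read off directly; if $x \in q_1 + q_2 + 4Q$ is a level-$\ge 2$ vertex, the bijection of Fig.~\ref{Parity-center-case} between the $7$-tile parity patches and the $7$-tile marked patches around such vertices returns the full marking of the central tile.

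The step I expect to be the main obstacle is showing that this reverse rule has a \emph{uniform} radius, despite the fact that tiles occur at arbitrarily high level and the black stripe of a level-$k$ vertex is an edge of a triangle of size $2^{k+1}$, which naively seems to require seeing that large triangle. The resolution is that one never needs the exact level of a high-level vertex: it suffices to recognise that $x$ is a level-$\ge 2$ vertex (bounded radius, by the coset identification above), after which Fig.~\ref{Parity-center-case} applies \emph{uniformly} for all levels $\ge 2$. This works because the colour configuration around every level-$\ge 2$ corner is the same (Fig.~\ref{basicPatch-2hlevelTriangle}) and, crucially, the colour-forcing rule {\bf R2} propagates the stripe direction of $x$ into the colours of the immediately adjacent tiles; thus the stripe direction, though geometrically determined by a far-away triangle, is already encoded in the bounded-radius parity pattern. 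The heart of the argument is therefore verifying that the finitely many $7$-tile parity patches genuinely distinguish all admissible combinations of stripe direction and colour order---so that the map of Fig.~\ref{Parity-center-case} is injective and independent of the level---which is exactly the finite case-check recorded in that figure.

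Finally I would assemble the pieces: with bounded radii $\rho_1$ for identifying $q_1 + 2Q$, roughly $3\rho_1$ for $q_1 + q_2 + 4Q$ after one rescaling, and a constant for reading the $7$-tile patch, the radius $R = \max$ of these bounds gives a local derivation of the fully marked tiling from the parity tiling. The colour ambiguity in the non-generic cases is no obstruction, since non-genericity is itself an observable property of the reconstructed triangularization and the parity then fixes the remaining colour choices along the exceptional line or lines. Combined with the trivial forward rule, this yields the mutual local derivability asserted in Corollary~\ref{MLD}.
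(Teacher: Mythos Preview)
Your proposal is correct and follows essentially the same route as the paper: the forward direction is trivial, and the reverse direction proceeds by locally identifying the coset $q_1+2Q$ via the ``no five hexagons'' rule, rescaling once to get $q_1+q_2+4Q$, and then invoking the bijection of Fig.~\ref{Parity-center-case} at level-$\ge 2$ vertices. Your treatment is in fact more explicit than the paper's in two respects---you spell out why a \emph{uniform} radius exists (repetitivity bounds the distance to a violating patch for each wrong coset) and you articulate why arbitrarily high-level vertices pose no obstruction (the $7$-tile bijection is level-independent once the level is $\ge 2$)---but these are clarifications of the same argument rather than a different approach.
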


\section{Concluding remarks} \label{cr}
The paper has developed an algebraic setting for the Taylor--Socolar hexagonal tilings. 
This approach leads naturally to a cut and project scheme with a compact $Q$-adic internal space $\overline{Q}$.
We have determined the structure of the tiling hull and in particular the way in which it
lies over compact group $Q$. Each tiling is a model set from this cut and project formalism.
The corresponding parity tilings are in fact completely equivalent to the fully decorated
tilings. We have also reproved the parity formula of \cite{ST}.

The tiling is both remarkably simple and remarkably subtle. 
The parity tilings remain fascinating and inviting of further study. 

\section{Acknowledgment}

\noindent
We are grateful to Michael Baake and Franz G\"ahler for their interest and helpful
suggestions in the writing of this paper, and to Joan Taylor who provided the 
idea behind \S\ref{parityHull}.


\begin{thebibliography}{999}

\small

\bibitem{AL2} S. Akiyama and J.-Y. Lee, \textit{The computation of overlap coincidence in Taylor--Socolar substitution tilings}, Preprint (2012).

\bibitem{Baake-Gaehler}  M.~ Baake and F.~G\"ahler, Oral communications.

\bibitem{BG}
M.~Baake and U.~Grimm, \textit{A hexagonal monotile for the Euclidean plane},
talk at the KIAS Workshop on The Mathematics of Aperiodic Order (2010). \newline
\texttt{http://vod.mathnet.or.kr/sub1.php?key\_s\_title$=$The$+$Mathematics$+$of$+$Aperiodic$+$Order}


\bibitem{BLM}
M.~Baake, D.~Lenz, R.~V.~ Moody, \textit{Characterizations of model sets by dynamical systems},
Ergodic Theory and Dynamical Systems {\bf 27} (2007), 341-382.

\bibitem{BM}
M.~Baake and R.~V.~Moody, \textit{Weighted Dirac combs with pure point diffraction}, 
J. reine engew. Math. {\bf 573} (2004), 61-- 94.

\bibitem{BSJ}
M.~Baake, M.~Schlottmann, and P.~D.~Jarvis, 
\textit{Quasiperiodic tilings with tenfold symmetry and equivalence with respect to local derivability},
J. Phys. A, {\bf 19} (1991), 4637-4654.


\bibitem{Dirk}
D. Frettl\"oh, \textit{Nichtperiodische Pflasterungen mit ganzzahligem Inflationsfaktor},
Ph.D. thesis, Universit\"at Dortmund, 2002, \newline
\texttt{http://www.math.uni-bielefeld.de/baake/frettloe/papers/diss.pdf}

\bibitem{HF}
 D.~Frettl\"oh and E.~Harriss, \textit{Tilings Encyclopedia}
\newline \texttt{http://tilings.math.uni-bielefeld.de/}

\bibitem{GJS}
F.~ G\"ahler, A.~Julien, J.~Savinien, \textit{Combinatorics and topology
of the Robinson tilings}, C. R. Acad. Sci. Paris, Ser. I (2012), in press.


\bibitem{GS}
B.~Grünbaum and G.C.~Shephard, 
\textit{Tilings and Patterns},
 W.H. Freeman, 1987.

\bibitem{LM1} J.-Y. Lee and R. V. Moody,
\textit{Lattice substitution systems and model sets},
Discrete and Computational Geometry {\bf 25} (2001), 173--201.

\bibitem{LMS1} 
J.-Y. Lee, R. V. Moody, and B. Solomyak,
\textit{Pure Point Dynamical and Diffraction Spectra},  Ann. Henri Poincar\'e
{\bf 3} (2002), 1003--1018.

\bibitem{RVM} 
R. V. Moody,
\textit{Model sets : a survey}, in : \textit{From quasicrystals to more complex
systems}, eds. F. Axel and J. -P. Gazeau, Les Editions de Physique,
Springer-Verlag, Berlin (2000), 145-166.

\bibitem{Penrose}
Roger Penrose, \textit{Remarks on a tiling: Details of a $(1+ \epsilon + \epsilon^2)$-aperiodic set},
in \textit{The mathematics of long-range aperiodic order}, ed. R.~V.~Moody, NATO ASI Series C:489,
Kluwer, 1997.

\bibitem{PenroseTwistor}
Roger Penrose, \textit{Twistor Newsletter}, Reprinted in \textit{Roger Penrose: Collected Works}, Vol.6: 1997-2003, Oxford University Press, 2010. \texttt{http://people.maths.ox.ac.uk/lmason/Tn/},\newline 
see specifically:
\texttt{http://people.maths.ox.ac.uk/lmason/Tn/41/TN41-08.pdf},\newline 
\texttt{http://people.maths.ox.ac.uk/lmason/Tn/42/TN42-09.pdf}, \newline
\texttt{http://people.maths.ox.ac.uk/lmason/Tn/43/TN43-11.pdf}, (1996, 1997).

\bibitem{Robinson}
R.~M.~Robinson, \textit{Undecidability and nonperiodicity for tilings of the plane}, Invent.
Math. {\bf 12} (1971), 177--209.

\bibitem{MS}
M.~Schlottmann, \textit{Generalized model sets and dynamical systems}, in: \textit{Directions in mathematical
quasicrystals}, M.~Baake and R.~V.~Moody, eds., CRM Monograph Ser. 13, AMS, Providence, RI(2000),
143-159.

\bibitem{ST}
J.~Socolar and J.~Taylor,
\textit{An aperiodic hexagonal tile}, Journal of Combinatorial Theory, Series A {\bf 118} (2011), 2207--2231. 

\bibitem{Taylor}
J.~Taylor, \textit{Aperiodicity of a functional monotile}, 
Preprint (2010), \newline
\texttt{http://www.math.uni-bielefeld.de/sfb701/files/preprints/sfb10015.pdf}


\end{thebibliography}
\end{document}